\newtheorem{thm}{Theorem}[section]
\newtheorem{lem}[thm]{Lemma}
\newtheorem{rem}[thm]{Remark}
\newtheorem{prop}[thm]{Proposition}
\newtheorem{cor}[thm]{Corollary}
\newtheorem{setup}[thm]{Set-up}
\def\QQ{\mathbb{Q}}
\def\ZZ{\mathbb{Z}}
\def\FF{\mathbb{F}}
\def\TT{\mathbb{T}}
\def\cO{\mathcal{O}}
\def\cF{\mathcal{G}}
\def\frob{\text{Frob}}
\def\chibar{\bar\chi}
\def\ord{\text{ord}}
\def\ad{\text{Ad}}
\newcommand{\GL}{\mathrm{GL}}
\def\Ga1{\Gamma_1}
\def\rhob{\bar\rho}
\def\tr{\operatorname{tr}}
\def\univ{\text{univ}}
\def\ps{\text{pd}}
\DeclareFontFamily{U}{wncy}{}
\DeclareFontShape{U}{wncy}{m}{n}{<->wncyr10}{}
 \DeclareSymbolFont{mcy}{U}{wncy}{m}{n}
 \DeclareMathSymbol{\Sh}{\mathord}{mcy}{"58} 
\begin{document}
\baselineskip 18pt

\title{Non-optimal levels of some reducible mod $p$ modular representations}
\author{Shaunak V. Deo} 
\email{shaunakdeo@iisc.ac.in}
\address{Department of Mathematics, Indian Institute of Science, Bangalore 560012, India}
\date{}

\subjclass[2010]{11F33, 11F80 (primary)}
\keywords{congruences of modular forms; level raising; deformations of reducible Galois representations}

\begin{abstract}
Let $p \geq 5$ be a prime, $N$ be an integer not divisible by $p$, $\bar\rho_0$ be a reducible, odd and semi-simple representation of $G_{\mathbb{Q},Np}$ of dimension $2$ and $\{\ell_1,\cdots,\ell_r\}$ be a set of primes not dividing $Np$.
After assuming that a certain Selmer group has dimension at most $1$, we find sufficient conditions for the existence of a cuspidal eigenform $f$ of level $N\prod_{i=1}^{r}\ell_i$ and appropriate weight lifting $\rhob_0$ such that $f$ is new at every $\ell_i$.
Moreover, suppose $p \mid \ell_{i_0}+1$ for some $1 \leq i_0 \leq r$.
Then, after assuming that a certain Selmer group vanishes, we find sufficient conditions for the existence of a cuspidal eigenform of level $N\ell_{i_0}^2 \prod_{j \neq i_0} \ell_j$ and appropriate weight which is new at every $\ell_i$ and which lifts $\rhob_0$.
As a consequence, we prove a conjecture of Billerey--Menares in many cases.
\end{abstract}

\maketitle

\section{Introduction}
Let $p \geq 5$ be a prime and $N$ be an integer not divisible by $p$.
Let $f$ be a newform of level $N$ and weight $k \geq 2$ with Fourier coefficients in $\overline{\QQ_p}$ and denote by $\bar\rho_f$ the residual mod $p$ Galois representation attached to $f$.
To be precise, let $\cO_f$ be the ring of integers in the finite extension of $\QQ_p$ generated by the Hecke eigenvalues of $f$ over $\QQ_p$ and let $\varpi_f$ be its uniformizer.
From the works of Eichler-Shimura and Deligne, there is a $p$-adic Galois representation $$\rho_f :  \text{Gal}(\overline{\QQ}/\QQ) \to \GL_2(\cO_f)$$ associated to $f$ which is absolutely irreducible and unramified outside primes dividing $Np$.
%We know that $\rho_f$ is unramified outside set of primes dividing $Np$ and is absolutely irreducible.
Then $\rhob_f$ is the \emph{semi-simplification} of the mod $p$ Galois representation $\rho_f \pmod{\varpi_f}$.

In this setting, one can ask some natural questions: does there exist a newform $g$ of level $M \neq N$ and weight $k$ such that $\rhob_g \simeq \rhob_f$? 
If yes, what are all the levels at which such newforms occur?

In his seminal work, Ribet \cite{Ri} studied these questions and established level raising for cuspidal eigenforms of level $\Gamma_0(N)$ and weight $2$.
In particular, he proved that if $f$ is a cuspidal eigenform of level $\Gamma_0(N)$ and weight $2$, $\ell$ is a prime not dividing $Np$ and $\rhob_f$ is absolutely irreducible, then there exists a cuspidal eigenform $g$ of level $\Gamma_0(N\ell)$ and weight $2$ such that $g$ is new at $\ell$ and $\rhob_g \simeq \rhob_f$ if and only if $$\tr(\rhob_f(\frob_{\ell}))^2= (\ell+1)^2 \pmod{\varpi_f}.$$
Diamond \cite{Dia} generalized this result to establish level raising for eigenforms of weight $k \geq 2$.
%To be precise, he proved that if $f$ is a cuspidal eigenform of level $\Gamma_1(N)$ and weight $k \geq 2$, $\ell$ is a prime not dividing $Np$ and $p \nmid \frac{1}{2}N\phi(N)\ell(k-2)!$, then there exists a cuspidal eigenform $g$ of level $\Gamma_0(N\ell)$ and weight $k$ such that $g$ is new at $\ell$ and $\rhob_g \simeq \rhob_f$ if and only if $$\tr(\rhob_f(\frob_{\ell}))^2= \ell^{k-2} (\ell+1)^2 \pmod{p}.$$
% with absolutely irreducible residual representation.
The study of such level raising results (in the more general setting of automorphic forms) and their consequences has now become a central theme in number theory.

On the other hand, given an odd, continuous, semi-simple representation $$\rhob_0 :  \text{Gal}(\overline{\QQ}/\QQ) \to \GL_2(\FF)$$ over a finite field $\FF$ of characteristic $p$ and an integer $p \nmid N$ divisible by the Artin conductor of $\rhob_0$, Carayol \cite{Ca} determined necessary conditions for the existence of a newform of level $N$ lifting $\rhob_0$.
%Carayol \cite{Ca} determined all the integers greater than the Artin conductor of $\rhob_0$ that can possibly occur as levels of newforms lifting $\rhob_0$. 
In other words, he classified all possible \emph{non-optimal} levels of newforms lifting $\rhob_0$.
Here we say that a newform $f$ lifts $\rhob_0$ if $\rhob_f \simeq \rhob_0$, where $\rhob_f$ is the semi-simple mod $p$ Galois representation attached to $f$ as above.
This leads us to a natural question: for which non-optimal levels does there exist a newform lifting $\rhob_0$?

This question was studied by Diamond and Taylor in \cite{DT}.
If $\rhob_0$ is absolutely irreducible, then they proved, under some mild assumptions, that for an appropriate weight and every non-optimal level, there exists a newform lifting $\rhob_0$ (see \cite[Theorem A]{DT}).
In fact, they generalized level raising results of Ribet and Diamond to multiple primes to get a newform for every non-optimal level lifting $\rhob_0$ (see \cite[Theorem B, C]{DT}).

The main aim of this article is to study this question for reducible $\rhob_0$'s and establish level raising results (in the spirit of Diamond--Taylor) for modular forms with residually reducible representations.
Note that the geometric techniques used by Diamond and Taylor to answer this question for absolutely irreducible $\rhob_0$'s do not work in the setting of reducible $\rhob_0$'s.
We use techniques from deformation theory of Galois representations, along with the modularity lifting theorems of Skinner--Wiles (\cite{SW2}) and the finiteness result of Pan (\cite[Theorem 5.1.2]{P}), to study this question.

\subsection{History} 
Very few results are known about level raising and non-optimal levels of modular forms with \emph{reducible} residual representations.
Before stating them, let us denote the mod $p$ cyclotomic character by $\omega_p$.

In his landmark work on Eisenstein ideal (\cite{M2}), Mazur proved that if $\ell \neq p$ is a prime, then there exists a newform of level $\Gamma_0(\ell)$ and weight $2$ lifting $1 \oplus \omega_p$ if and only if $p \mid \ell-1$.
In \cite{Y}, Yoo partially extended Mazur's result to squarefree levels.
To be precise, he gave sufficient conditions for the existence of a newform of level $\Gamma_0(N)$ with squarefree $N$ and weight $2$ lifting $1 \oplus \omega_p$ (see \cite[Theorem 1.3]{Y}).
He also formulated some necessary conditions for the existence of such newforms and proved that they are sufficient in some cases (see \cite[Section 6]{Y}).
Note that some of the results presented in \cite{Y} are proved by Ribet \cite{Ri2} (see \cite[Section 2]{Y}).
Similar results were also obtained by Wake and Wang-Erickson in \cite{WWE1} using different methods.
Mazur's results were extended to the setting of Hilbert modular forms by Martin \cite{Ma} and he also proved, in the modular forms case, some results similar to those of Yoo (see \cite[Theorem A]{Ma}).

On the other hand, in \cite{BM}, Billerey and Menares determined, for an even integer $4 \leq k < p-1$, all primes $\ell$ for which there exists a newform $f$ of level $\Gamma_0(\ell)$ and weight $k$ lifting $1 \oplus \omega_p^{k-1}$ (see \cite[Theorem 1]{BM}).
%Here $\omega_p$ is the mod $p$ cyclotomic character.
They also proposed a conjecture about all squarefree levels at which there exists a newform lifting $1 \oplus \omega_p^{k-1}$ (see \cite[Conjecture 3.2]{BM}).
% non-optimal levels of newforms lifting $\rhob_0 = 1 \oplus \omega_p^{k-1}$ to  determine all primes $\ell$ for which there exists a newform $f$ of level $\Gamma_0(\ell)$ and weight $k$ lifting $\rhob_0$ (see \cite[Theorem 1]{BM}).
Moreover, in a subsequent work (\cite{BM2}), they extended Mazur's result to other reducible representations.
In particular, if $\bar\rho_0$ is an odd, reducible, semi-simple mod $p$ Galois representation with Artin conductor $N_0$ such that there does not exist a newform of level $N_0$ lifting $\rhob_0$ and $\ell \nmid N_0p$ is a prime, then they gave necessary and sufficient conditions for the existence of a newform of level $N_0\ell$ and appropriate weight lifting $\rhob_0$ (see \cite[Theorem 2]{BM2}).
The main result of \cite{BM} was also proved in \cite{GP} and \cite{KKMS} using different methods.

In a recent work (\cite{LW}), Lang and Wake proved that if $\ell$ is a prime such that $p \mid \ell+1$, then there exists a newform of level $\Gamma_0(\ell^2)$ and weight $2$ lifting $1 \oplus \omega_p$ (see \cite[Theorem B]{LW}).
In our previous work (\cite{D2}), we obtained some results about non-optimal levels of newforms of weight $k > 2$ lifting a reducible $\rhob_0$ which are not covered by the results mentioned above (\cite[Theorem B]{D2}).
To the best of our knowledge, no other results are known about non-optimal levels of newforms lifting a reducible $\rhob_0$.

Note that the methods of Mazur (\cite{M2}), Ribet (\cite{Ri2}) and Yoo (\cite{Y}) can be termed as `geometric' as they study geometry of Jacobians of modular curves to prove their results.
On the other hand, Billerey--Menares (\cite{BM}, \cite{BM2}) and Wake--Lang compute the constant term of the relevant Eisenstein series at all the cusps of an appropriate modular curve to produce a cusp form having the desired properties.
%Same strategy is also used in \cite{KKMS} and the period polynomials for congruence subgroups are used in \cite{GP}.
So we can say that their methods are `analytic'.
The methods of \cite{GP} and \cite{KKMS}, which are different from those of \cite{BM}, are still analytic.
Finally, in \cite{WWE1} and \cite{D2}, deformation theory of Galois (pseudo-)representations is used to prove the level raising results.
So their methods can be called `algebraic'.

In the present article, we follow the approach of \cite{D2} to tackle the problem using algebraic methods.
Note that, in the language of $R=\TT$ theorems, a level raising result translates into the statement that a certain Hecke algebra $\TT$ is `big enough'.
Now the geometric and analytic methods study the properties of this Hecke algebra $\TT$ directly.
On the other hand, algebraic methods study an appropriate deformation ring $R$ and then relate it with the Hecke algebra.
So typically, algebraic methods prove that the deformation ring is big enough but that is not sufficient to conclude that the Hecke algebra is also big enough.
In this article, we combine algebraic methods with the modularity lifting theorems of Skinner--Wiles (\cite{SW2}) to overcome this obstruction.

\subsection{Set-up and Main results}
Before stating our main results, we will describe the setup with which we will be working throughout the article.

\begin{setup}
\label{setup}
 Let $N \geq 1$ be an integer such that $p \nmid N\phi(N)$. 
Let $G_{\QQ,Np}$ be the Galois group of the maximal extension of $\QQ$ unramified outside primes dividing $Np$ and $\infty$ over $\QQ$.
Let $\FF$ be a finite field of characteristic $p$.
Let $\rhob_0 : G_{\QQ,Np} \to \GL_2(\FF)$ be a continuous, odd representation such that $\rhob_0 = \chibar_1 \oplus \chibar_2$ for some continuous characters $\chibar_1, \chibar_2 : G_{\QQ,Np} \to \FF^{\times}$. Let $\chibar=\chibar_1 \chibar_2^{-1}$.
Denote by $\omega_p$ the mod $p$ cyclotomic character of $G_{\QQ,Np}$.
Let $N_0$ be the Artin conductor of $\rhob_0$ and let $\det(\rhob_0)=\bar\psi\omega_p^{k_0-1}$ with $\bar\psi$ is unramified at $p$.
Suppose the following conditions hold:
%$\chibar_2$ is unramified at $p$, $\chibar_1 = \bar\psi\omega_p^{k_0-1}$ such that $\bar\psi$ is unramified at $p$ and the following conditions hold:
\begin{enumerate}
\item $\chibar_2$ is unramified at $p$,
\item\label{hyp2} $N_0 \mid N$,
\item\label{hyp4} $1 < k_0 < p$ and $\bar\chi|_{G_{\QQ_p}} \neq \omega_p^{-1}|_{G_{\QQ_p}}$,
\item\label{hyp3} If $\ell \mid N$, $p \mid \ell +1$ and $\chibar|_{G_{\QQ_{\ell}}} = \omega_p|_{G_{\QQ_{\ell}}}$, then $\ell^2 \mid N$.
%\item If $\ell \mid N$, $p \mid \ell-1$ and for $i=1,2$, $k_i$ is the highest power of $\ell$ dividing the conductor of $\chibar_i$, then $\ell^{k_1+k_2+2} \mid N$. 
\end{enumerate}
\end{setup}

%Let $N'$ be as in \cite[Lemma $2.6$]{D2}.

For a prime $\ell$ and a representation $\rho$ of $G_{\QQ,Np}$, denote by $\rho|_{G_{\QQ_{\ell}}}$ the restriction of $\rho$ to the local Galois group at $\ell$ and let $H^1_{\{p\}}(G_{\QQ,Np},\rho) := \ker (H^1(G_{\QQ,Np},\rho) \to H^1(G_{\QQ_p},\rho))$.
If $f$ is a modular eigenform with Fourier coefficients in a finite extension of $\QQ_p$, then we denote by $\rho_f$ the $p$-adic Galois representation attached to it by Eichler--Shimura and Deligne.
We say that $\rho_f$ lifts $\rhob_0$ if the semi-simplification $\rhob_f$ of the corresponding residual mod $p$ Galois representation is isomorphic to $\rhob_0$.

%\subsection{Main Results}
We are now ready to state our main results. 
\begin{thm}
\label{thmb}
Suppose we are in the Set-up~\ref{setup} as above and $\dim(H^1_{\{p\}}(G_{\QQ,Np},\chibar^{-1})) = 1$.
%and $2 < k_0 < p$.
Let $k>2$ be an integer such that $k \equiv k_0 \pmod{p-1}$ and $\ell_1,\cdots,\ell_r$ be primes such that $\ell_i \nmid Np$, $\chibar|_{G_{\QQ_{\ell_i}}} = \omega_p|_{G_{\QQ_{\ell_i}}}$ and $p \nmid \ell_i-1$ for all $1 \leq i \leq r$.
Then there exists an eigenform $f$ of level $N\prod_{i=1}^{r}\ell_i$ and weight $k$ such that $\rho_f$ lifts $\rhob_0$ and $f$ is new at $\ell_i$ for every $1 \leq i \leq r$.
\end{thm}

\begin{thm}
\label{thmc}
Suppose we are in the Set-up~\ref{setup} as above and $\ell_0$ is a prime such that $\ell_0 \nmid Np$, $\chibar|_{G_{\QQ_{\ell_0}}} = \omega_p^{-1}|_{G_{\QQ_{\ell_0}}}$ and $p \nmid \ell_0-1$.
Suppose the following hypotheses hold:
\begin{enumerate}
\item $H^1_{\{p\}}(G_{\QQ,Np},\chibar^{-1}) = 0$,
\item If $p \mid \ell_0+1$, then $\chibar \neq \omega_p$. 
\end{enumerate}
Let $k>2$ be an integer such that $k \equiv k_0 \pmod{p-1}$ and $\ell_1,\cdots,\ell_r$ be primes such that $\ell_i \nmid Np$, $\chibar|_{G_{\QQ_{\ell_i}}} = \omega_p|_{G_{\QQ_{\ell_i}}}$ and $p \nmid \ell_i-1$ for all $1 \leq i \leq r$.
Then there exists an eigenform $f$ of level $N\prod_{i=0}^{r}\ell_i$ and weight $k$ such that $\rho_f$ lifts $\rhob_0$ and $f$ is new at $\ell_i$ for every $0 \leq i \leq r$.
\end{thm}

\begin{rem}
Note that the Artin conductor $N_0$ of $\rhob_0$ satisfies Hypothesis~\eqref{hyp3} of Set-up~\ref{setup}.
Hence, if we take $N=N_0$ in Theorems~\ref{thmb} and \ref{thmc}, then the eigenforms that we obtain in Theorems~\ref{thmb} and \ref{thmc} are newforms.
\end{rem}

\begin{rem}
The proof of Theorem~\ref{thmc} implies that, under its hypotheses, there exists an eigenform $f$ of level $N\ell_0$ and weight $k$ such that $\rho_f$ lifts $\rhob_0$ and $f$ is new at $\ell_0$.
\end{rem}

In \cite{D2}, we have proved a result similar to Theorems~\ref{thmb} and \ref{thmc} (see \cite[Theorem B]{D2}).
However, there are a couple of key differences between \cite[Theorem B]{D2} and the theorems above which we will now explain.
Before moving ahead, we will establish some notation.
We say that a prime $\ell$ satisfies the level raising condition for $\chibar$ (resp.  for $\chibar^{-1}$) if $\ell \nmid Np$ and $\chibar|_{G_{\QQ_\ell}} = \omega_p|_{G_{\QQ_{\ell}}}$ (resp. $\chibar^{-1}|_{G_{\QQ_\ell}} = \omega_p|_{G_{\QQ_{\ell}}}$).

Firstly, in \cite[Theorem B]{D2}, we assume that a \emph{global Galois cohomology group of $\chibar$} is cyclic (i.e. $\dim(H^1(G_{\QQ,Np},\chibar)) =1$). On the other hand, in Theorems~\ref{thmb} and \ref{thmc}, we assume that a \emph{trivial at $p$ Selmer group of $\chibar^{-1}$} is either non-zero and cyclic ($\dim(H^1_{\{p\}}(G_{\QQ,Np},\chibar^{-1})) = 1$) or trivial ($H^1_{\{p\}}(G_{\QQ,Np},\chibar^{-1}) = 0$).
%($\dim(H^1_{\{p\}}(G_{\QQ,Np},\chibar^{-1})) =1$ in Theorem~\ref{thmb} and $\dim(H^1_{\{p\}}(G_{\QQ,Np},\chibar^{-1})) =0$ in Theorem~\ref{thmc}).
No direct relation between these Galois cohomology groups has been established so far.
However, the cyclicity hypothesis appearing in Theorem~\ref{thmb} is milder than the cyclicity hypothesis appearing in \cite[Theorem B]{D2} (see \S\ref{assumesec} for more details).
%As a consequence, we are able to prove new types of level raising results (Theorem~\ref{thmc} and Theorem~\ref{thme}) in this article.

Secondly, in \cite[Theorem B]{D2}, we obtain simultaneous level raising at primes $\ell_i$'s which satisfy the level raising condition for $\chibar^{-1}$.
On the other hand, in Theorem~\ref{thmb}, we obtain simultaneous level raising at primes $\ell_i$'s satisfying the level raising condition for $\chibar$.
Furthermore, in Theorem~\ref{thmc}, we obtain simultaneous level raising at a set of primes $\ell_i$'s which consists of primes of \emph{both the types} described above.

\begin{rem}
The results obtained in this article are mostly disjoint from those of \cite{D2} and most of them cannot be recovered from \cite[Theorem B]{D2}.
To be precise, the cases of Theorems~\ref{thmb} and \ref{thmc} \emph{not covered} by \cite[Theorem B]{D2} are exactly those for which \emph{one} of the following conditions hold:
\begin{enumerate}
\item $\dim(H^1(G_{\QQ,Np},\chibar)) > 1$,
\item $p \nmid \ell_i+1$ for some $1 \leq i \leq r$.
\end{enumerate}
\end{rem}

Let $\zeta_p$ be a primitive $p$-th root of unity and denote the class group of $\QQ(\zeta_p)$ by $\text{Cl}(\QQ(\zeta_p))$.
Given a character $\chi$ of $\text{Gal}(\QQ(\zeta_p)/\QQ)$, denote by $\text{Cl}(\QQ(\zeta_p))/p\text{Cl}(\QQ(\zeta_p))[\chi]$ the subspace of $\text{Cl}(\QQ(\zeta_p))/p\text{Cl}(\QQ(\zeta_p))$ on which $\text{Gal}(\QQ(\zeta_p)/\QQ)$ acts via character $\chi$.
Denote the $k$-th Bernoulli number by $B_{k}$.
From now on, we will use the notation $p \mid B_k$ (resp. $p \nmid B_k$) to mean that $p$ divides (resp. does not divide) the numerator of $B_k$.
As a consequence of Theorem~\ref{thmb}, we get the following corollaries which establish the conjecture of Billerey--Menares (\cite[Conjecture 3.2]{BM}) in many cases:
\begin{cor}
\label{corf}
Let $\rhob_0 = 1 \oplus \omega_p^{k_0-1}$, where $k_0$ is an even integer such that $2 < k_0 <p-1$. 
Let $k$ be an integer such that $k \equiv k_0 \pmod{p-1}$.
Let $\ell_1,\cdots,\ell_r$ be primes such that $p \mid \ell_i^{k_0-2} - 1$ and $p \nmid \ell_i-1$ for all $1 \leq i \leq r$.
Suppose $\text{Cl}(\QQ(\zeta_p))/p\text{Cl}(\QQ(\zeta_p))[\omega_p^{k_0}]=0$ and $p \mid B_{k_0}$.
Then there exists a newform $f$ of level $\Gamma_0(\prod_{i=1}^{r}\ell_i)$ and weight $k$ such that $\rho_f$ lifts $\rhob_0$.
\end{cor}

\begin{cor}
\label{corb}
Let $\rhob_0 = 1 \oplus \omega_p^{k_0-1}$, where $k_0$ is an even integer such that $2 < k_0 <p-1$. 
Let $k$ be an integer such that $k \equiv k_0 \pmod{p-1}$.
Let $\ell_0$ be a prime such that $p \mid \ell_0^{k_0}-1$ and $p \nmid \ell_0-1$.
Let $\ell_1,\cdots,\ell_r$ be primes such that $p \mid \ell_i^{k_0-2} - 1$ and $p \nmid \ell_i-1$ for all $1 \leq i \leq r$.
Suppose $p \nmid B_{k_0}$.
%one of the following hypotheses holds:
%\begin{enumerate}
%\item $p$ is a regular prime,
%\item $\text{Cl}(\QQ(\zeta_p))/p\text{Cl}(\QQ(\zeta_p))[\omega_p^{1-k_0}]=0$.
%\end{enumerate}
Then there exists a newform $f$ of level $\Gamma_0(\prod_{i=0}^{r}\ell_i)$ and weight $k$ such that $\rho_f$ lifts $\rhob_0$.
%\begin{enumerate}
%\item If $p \nmid \ell_0^2-1$, then there exists a newform $f$ of level $\Gamma_0(\prod_{i=0}^{r}\ell_i)$ and weight $k$ such that $\rho_f$ lifts $\rhob_0$.
%\item If $p \mid \ell_0-1$, then there exists a newform $f$ of level $\prod_{i=0}^{r}\ell_i$ and weight $k$ such that $\rho_f$ lifts $\rhob_0$.
%\end{enumerate}
\end{cor}
 
In the next few remarks, we will elaborate a bit more on the cases of the conjecture of Billerey--Menares that are covered by the corollaries above and the cases that remain to be proved.
In what follows, $2 < k_0 < p-1$ is an even integer.
\begin{rem}
Note that Corollary~\ref{corf} proves the cases of the Billerey--Menares conjecture given by its second condition under some additional assumptions.
To be precise, suppose $p \mid B_{k_0}$, $N =\prod_{\i=1}^{r}\ell_i$ and $\ell_i^{k_0-2} \equiv 1 \pmod{p}$ for all $1 \leq i \leq r$.
Then we prove that there exists a newform of level $N$ and weight $k_0$ lifting $1 \oplus \omega_p^{k_0-1}$ if $\text{Cl}(\QQ(\zeta_p))/p\text{Cl}(\QQ(\zeta_p))[\omega_p^{k_0}]=0$ (which is implied by Vandiver's conjecture) and $p \nmid \phi(N)$.
\end{rem}

\begin{rem}
Note that Corollary~\ref{corb} proves the cases of the Billerey--Menares conjecture given by its first condition under some additional assumptions.
To be precise, $N =\prod_{\i=1}^{r}\ell_i$ and $(\ell_i^{k_0-2}-1)(\ell_i^{k_0}-1) \equiv 0 \pmod{p}$ for all $1 \leq i \leq r$.
Then we prove that there exists a newform of level $N$ and weight $k_0$ lifting $1 \oplus \omega_p^{k_0-1}$ if the following conditions hold:
\begin{enumerate}
\item $p \nmid \phi(N)$ and $p \nmid B_{k_0}$,
\item There exists at most one prime $\ell \mid N$ such that $\ell^{k_0} \equiv 1 \pmod{p}$ and $p \nmid \ell^2-1$.
\end{enumerate}
%Note that for a fixed $k_0$, the hypotheses of Corollary~\ref{corb} are satisfied by all but finitely many primes $p$.
\end{rem}

\begin{rem}
Our results do not prove the Billerey--Menares conjecture for certain classes of levels (even in a single case).
For instance, suppose $N$ is a squarefree number such that $p \nmid N$ and it satisfies one of the following conditions:
\begin{enumerate}
\item $p \mid \phi(N)$,
\item $N=\ell_1\ell_2\ell_3$ such that $ \ell_i^2-1 \not\equiv 0 \pmod{p}$ for $ 1 \leq i \leq 3$, $\ell_i^{k_0} \equiv 1 \pmod{p}$ for $i=1,2$ and $\ell_3^{k_0-2}  \equiv 1 \pmod{p}$.
\end{enumerate}
Then the Billerey--Menares conjecture states that there exists a newform of level $N$ and weight $k_0$ lifting $1 \oplus \omega_p^{k_0-1}$.
We do not prove any results for such levels and these cases are not covered by \cite[Corollary 5.3.5]{D2} as well.
An explicit example of the case of second type is $p=37$, $k_0 =6$, $\ell_1=11$, $\ell_2 = 233$ and $\ell_3 = 43$.
\end{rem}

In \cite{BM}, Billerey and Menares also find a logarithmic lower bound for the number of newforms of weight $k$ and prime level belonging to an explicit set of primes of natural lower density at least $3/4$ (\cite[Theorem 2]{BM}).
Moreover, after assuming their conjecture, they extend this result to an appropriate family $\mathcal{N}_r$ of squarefree integers (\cite[Theorem 4.2]{BM}).
They crucially use \cite[Theorem 2]{LMP} which, roughly speaking, is a result about large factors of $p-1$ for a prime $p$.
As a consequence, their result depends on their conjecture holding for squarefree levels $N = \prod_{i=1}^{r}\ell_i$ such that $p \mid \ell_i-1$ \emph{for all} $1 \leq i \leq r$.

In Corollaries~\ref{corf} and \ref{corb}, we prove \cite[Conjecture 3.2]{BM} for levels $N$ such that $p \nmid \phi(N)$.
However, \cite[Theorem 2]{LMP} is also true if you replace $p_i-1$ by $p_i+1$ in the definition of $\mathcal{A}_{k,c}$ in loc. cit. (see the discussion after the theorem in \cite{LMP}).
Note that, in Corollary~\ref{corb}, we prove \cite[Conjecture 3.2]{BM} for squarefree levels $N = \prod_{i=1}^{r}\ell_i$ such that $p \mid \ell_i + 1$ for all $1 \leq i \leq r$ under the assumption that $p \nmid B_{k_0}$.

Let $\mathcal{N}_r'$ be the set obtained by replacing $p_i-1$ with $p_i+1$ in the definition of $\mathcal{N}_r$ occurring in \cite[Section 4.1]{BM}.
Then, following the proof of \cite[Theorem 4.2]{BM}, we conclude that Corollary~\ref{corb} implies \cite[Theorem 4.2]{BM} for the set $\mathcal{N}_r'$.

%\begin{rem}
%The proof of Theorem~\ref{thmb} implies that if $p \mid \ell_0+1$, then, under the hypothesis of Corollary~\ref{corb}, there exists a newform of level either $\Gamma_0(\ell_0^2\prod_{i=1}^{r}\ell_i)$ or $\Gamma_0(\prod_{\i=0}^{r}\ell_i)$ and weight $k$ lifting $\rhob_0$.
%\end{rem}

%\begin{thm}
%\label{thmd}
%Suppose $\rhob_0 = 1 \oplus \omega_p$.
%Let $\ell_1,\cdots,\ell_r$ be primes such that $ p \mid \ell_i + 1$ for all $1 \leq i \leq r$.
%Then there exists a newform $f$ of level $p\prod_{i=1}^{r}\ell_i$ and weight $2$ such that $\rho_f$ lifts $\rhob_0$.
%\end{thm}

We now state our last main result where we obtain level raising by a square of a prime.
\begin{thm}
\label{thme}
Suppose we are in the Set-up~\ref{setup} as above, $\chibar \neq \omega_p$ and $H^1_{\{p\}}(G_{\QQ,Np},\chibar^{-1}) = 0$. 
%and $H^1_{\{p\}}(G_{\QQ,Np},\chibar^{-1}\omega_p) = 0$.
Let $k>2$ be an integer such that $k \equiv k_0 \pmod{p-1}$. 
Let $\ell$ be a prime such that $p \mid \ell+1$ and $\chibar|_{G_{\QQ_{\ell}}} = \omega_p|_{G_{\QQ_{\ell}}}$.
Let $\ell_1,\cdots,\ell_r$ be primes such that $\ell_i \nmid Np$, $\chibar|_{G_{\QQ_{\ell_i}}} = \omega_p|_{G_{\QQ_{\ell_i}}}$ and $p \nmid \ell_i-1$ for all $1 \leq i \leq r$. Then:
\begin{enumerate}
\item\label{bhaag1} There exists an eigenform $f$ of level $N\ell^2$ and weight $k$ such that $\rho_f$ lifts $\rhob_0$ and $f$ is new at $\ell$.
\item\label{bhaag2} There exists an eigenform $f'$ of level $N\ell^2\prod_{i=1}^{r}\ell_i$ and weight $k$ such that $\rho_{f'}$ lifts $\rhob_0$, $f'$ is new at $\ell$ and $f'$ is new at $\ell_i$ for every $1 \leq i \leq r$.
\end{enumerate}
\end{thm}

If $f$ is the eigenform obtained in Theorem~\ref{thme}, then the newness of $f$ at $\ell$ means that $\ell^2 \mid N'$ where $N'$ is the level of the newform underlying $f$.
\begin{rem}
Note that, an analogue of Theorem~\ref{thme} is \emph{not} proved in \cite{D2}.
Moreover, the results of the type of Part~\eqref{bhaag2} of Theorem~\ref{thme} have not been proved earlier for reducible residual representations to the best of our knowledge.
\end{rem}

As a corollary, we get: 
\begin{cor}
\label{squarecor}
Let $\rhob_0 = 1 \oplus \omega_p^{k_0-1}$, where $k_0$ is an even integer such that $2 < k_0 <p-1$.
Let $k$ be an integer such that $k \equiv k_0 \pmod{p-1}$ and $\ell$ be a prime such that $p \mid \ell+1$.
Let $\ell_1,\cdots,\ell_r$ be primes such that $p \mid \ell_i^{k_0-2} - 1$ and $p \nmid \ell_i-1$ for all $1 \leq i \leq r$.
Suppose $p \nmid B_{k_0}$. Then:
\begin{enumerate}
\item\label{ek} There exists a newform $f$ of level $\Gamma_0(\ell^2)$ and weight $k$ such that $\rho_f$ lifts $\rhob_0$.
\item\label{don} There exists a newform $f'$ of level $\Gamma_0(\ell^2\prod_{i=1}^{r}\ell_i)$ and weight $k$ such that $\rho_{f'}$ lifts $\rhob_0$.
%\item\label{teen} There exists a newform $f''$ of level $\Gamma_0(\ell\prod_{i=1}^{r}\ell_i)$ and weight $k$ such that $\rho_{f''}$ lifts $\rhob_0$.
\end{enumerate}
\end{cor}

Note that Part~\eqref{ek} of Corollary~\ref{squarecor} is a partial generalization of Part (a) of \cite[Theorem B]{LW} to higher weights.
%On the other hand, the results of the type of Part~\eqref{don} of Corollary~\ref{squarecor} have not been proved earlier for reducible residual representations to the best of our knowledge.

\begin{rem}
The proof of \cite[Theorem B]{LW} crucially relies on the result of Mazur \cite{M2} which asserts that there does not exist a newform of prime level $\Gamma_0(\ell)$ and weight $2$ lifting $1 \oplus \omega_p$ if $p \mid \ell+1$.
On the other hand, Billerey--Menares \cite{BM} prove the existence of such a newform when $k >2$.
So the arguments of \cite{LW} do not yield, without any additional inputs, Part~\eqref{ek}  of Corollary~\ref{squarecor}.
\end{rem}

\begin{rem}
Hypotheses of Corollaries~\ref{corf}, \ref{corb} and \ref{squarecor} are satisfied when $p$ is a regular prime.
Moreover, for a fixed $k_0$, the hypotheses of Corollaries~\ref{corb} and \ref{squarecor} are satisfied by all but finitely many primes $p$.
\end{rem}

\subsection{Cyclicity of $H^1_{\{p\}}(G_{\QQ,Np},\chibar^{-1})$}
\label{assumesec}
We will now briefly analyze the assumptions on $H^1_{\{p\}}(G_{\QQ,Np},\chibar^{-1})$ appearing in our main results.
Since we have assumed $\chibar|_{G_{\QQ_p}} \neq \omega_p^{-1}|_{G_{\QQ_p}}$, it follows, from the local Euler characteristic formula, that $\dim(H^1(G_{\QQ_p},\chibar^{-1}|_{G_{\QQ_p}}))=1$.
So we conclude that $\dim(H^1(G_{\QQ,Np},\chibar^{-1})) \leq \dim(H^1_{\{p\}}(G_{\QQ,Np},\chibar^{-1})) + 1$.

We begin by analyzing the vanishing of $H^1_{\{p\}}(G_{\QQ,Np},\chibar^{-1})$ (which appears as a hypothesis in Theorems~\ref{thmc} and \ref{thme}).
Since $\chibar$ is odd, global Euler characteristic formula implies that $\dim(H^1(G_{\QQ,Np},\chibar^{-1})) \geq 1$.
Therefore we see that $$H^1_{\{p\}}(G_{\QQ,Np},\chibar^{-1}) =0 \implies \dim(H^1(G_{\QQ,Np},\chibar^{-1})) =1.$$
We refer the reader to \cite[Section 1.4]{D2} for a brief summary of known results about cyclicity of $H^1(G_{\QQ,Np},\bar\eta)$ for odd characters $\bar\eta$.

%From the Greenberg-Wiles formula (\cite[Theorem 2]{Wa}), we see that $$\dim(H^1(G_{\QQ,Np},\chibar^{-1})) =1 \implies \chibar^{-1}|_{G_{\QQ_{\ell}}} \neq \omega_p|_{G_{\QQ_{\ell}}} \text{ for all primes } \ell \mid N.$$
Suppose $K_0$ is the fixed field of $\chibar^{-1}$ and $\text{Cl}(K_0)$ is its class group.
Then it is easy to verify, using the Greenberg-Wiles formula (\cite[Theorem 2]{Wa}), that $H^1_{\{p\}}(G_{\QQ,Np},\chibar^{-1}) =0$ if and only if the following conditions hold:
\begin{enumerate}
\item $\chibar^{-1}$-component of $\text{Cl}(K_0)/p\text{Cl}(K_0)$ is zero,
\item $\chibar^{-1}|_{G_{\QQ_{\ell}}} \neq \omega_p|_{G_{\QQ_{\ell}}}$ for all primes $\ell \mid N.$
\end{enumerate}
Thus, when $N=1$ and $\chibar=\omega_p^{k_0-1}$ for an even $2 < k_0 < p$, then Herbrand-Ribet theorem implies that 
$$H^1_{\{p\}}(G_{\QQ,p},\omega_p^{1-k_0}) =0 \iff p \nmid B_{k_0}.$$
 Note that this condition is satisfied when $p$ is a regular prime.

We now move on to the hypothesis $\dim(H^1_{\{p\}}(G_{\QQ,Np},\chibar^{-1}))=1$ appearing in Theorem~\ref{thmb}. 
Note that this means $\dim(H^1(G_{\QQ,Np},\chibar^{-1})) \leq 2$ with equality holding in many cases.
For instance, from the Greenberg-Wiles formula (\cite[Theorem 2]{Wa}), we conclude that $$\dim(H^1(G_{\QQ,Np},\chibar^{-1})) \leq 2 \implies \chibar^{-1}|_{G_{\QQ_{\ell}}} = \omega_p|_{G_{\QQ_{\ell}}} \text{ for at most one prime } \ell \mid N.$$
Moreover, it also implies that if $H^1_{\{p\}}(G_{\QQ,Np},\chibar^{-1}) =0$ and $\ell$ is a prime such that $\ell \nmid Np$ and $\chibar^{-1}|_{G_{\QQ_\ell}} = \omega_p|_{G_{\QQ_\ell}}$, then $\dim(H^1_{\{p\}}(G_{\QQ,N\ell p},\chibar^{-1}))=1$ and $\dim(H^1(G_{\QQ,N\ell p},\chibar^{-1}))=2$.
So this cyclicity hypothesis is weaker than the one appearing in \cite[Theorem B]{D2}.

Suppose $\chibar^{-1}|_{G_{\QQ_{\ell}}} \neq \omega_p|_{G_{\QQ_{\ell}}}$ for all primes $\ell \mid N$.
Then $\dim(H^1_{\{p\}}(G_{\QQ,Np},\chibar^{-1}))=1$ if and only if the $\chibar^{-1}$-component of $\text{Cl}(K_0)/p\text{Cl}(K_0)$ is non-trivial and cyclic.
Thus, if $2 < k_0 < p-1$ is an even integer, $N=1$, $\chibar^{-1} = \omega_p^{1-k_0}$ and $p \mid B_{k_0}$, then Vandiver's conjecture, along with the Herbrand-Ribet theorem, implies that $\dim(H^1_{\{p\}}(G_{\QQ,Np},\chibar^{-1}))=1$ (see \cite[Theorem 22]{BK}).
From \cite[Corollary 3.8]{K}, it follows that for all primes $p > 5$ either $H^1_{\{p\}}(G_{\QQ,p},\omega_p^3) = 0$ or $\dim(H^1_{\{p\}}(G_{\QQ,p},\omega_p^3))=1$.
Note that the cyclicity of $H^1_{\{p\}}(G_{\QQ,Np},\chibar^{-1})$ is also related to the Gorenstein property of certain Hecke algebras (see \cite{K2} and \cite{O}).
%about simultaneous level raising at primes which do not (necessarily) satisfy the level raising condition for the same character (Theorem~\ref{thmc}) and about level raising with prime square appearing in the level (Theorem~\ref{thme}).

\subsection{Sketch of the proofs of the main results}
Note that Theorem~\ref{thmc} (except for the case when $p \mid \ell_0+1$) follows easily from Theorem~\ref{thmb}. 
The Corollaries \ref{corf} and \ref{corb} follow easily from Theorem~\ref{thmb} and Theorem~\ref{thmc}, respectively. 
So we will now give a brief sketch of proof of Theorem~\ref{thmb}.
To prove the theorem, we mainly follow the strategy used in \cite[Sec. 5]{D2} to prove \cite[Theorem B]{D2}. 
However, there are some differences between the two strategies which we will highlight below.

Note that we are assuming $k >2$ and let $M=N\prod_{i=1}^{r}\ell_i$. 
In view of the modularity lifting theorem of Skinner-Wiles (\cite[Theorem A]{SW2}), \cite[Proposition 2]{Ca} and \cite[Lemma 5.1.2]{D2}, our strategy is to construct a lift $\rho : G_{\QQ, Mp} \to \GL_2(\overline{\QQ_p})$ with appropriate determinant of a reducible, non-split representation $\rhob_c : G_{\QQ,Mp} \to \GL_2(\FF)$ with semi-simplification $\rhob_0$ such that
\begin{enumerate}
\item $\rho$ is $p$-ordinary and irreducible,
\item $\rho$ is ramified (and in fact, Steinberg) at $\ell_i$ for every $1 \leq i \leq r$.
\end{enumerate}
We construct this lift from $\rho_c^{\ord}$, the universal ordinary deformation of $\rhob_c$. To do so, observe that it suffices to find the following:
\begin{enumerate}
\item A quotient $R$ of $R^{\ord}_{\rhob_c}$, the universal ordinary deformation ring of $\rhob_c$, such that it is a finite $\ZZ_p$-algebra of Krull dimension $1$ and the determinant of $\rho_c^{\ord}$ has the right shape in it, 
\item A minimal prime $P$ of $R$ such that the corresponding representation over $R/P$ (i.e. the representation obtained from $\rho_c^{\text{ord}}$) is \emph{ramified} and \emph{reducible} at every $\ell_i$.
%the universal ordinary deformation of $\rhob_c$
%(i.e. the representation obtained from $\rho_c^{\text{ord}}$)
 %is ramified at every $\ell_i$.
\end{enumerate}
The choice of the residual representation $\rhob_c$ is an important difference between the strategy of this article and that of \cite{D2}.
In \cite{D2}, we chose the residual representation $\rhob_c : G_{\QQ,Mp} \to \GL_2(\FF)$ to be of the form $ \begin{pmatrix} \chibar_1 & * \\ 0  & \chibar_2\end{pmatrix}$ with $* \neq 0$.
In this article, we consider an extension in the other direction for the residual representation.
To be precise, we take $\rhob_c : G_{\QQ,Mp} \to \GL_2(\FF)$ such that $\rhob_c \simeq  \begin{pmatrix} \chibar_1 & 0 \\ *  & \chibar_2\end{pmatrix}$ with $* \neq 0$ corresponding to a non-zero element $c  \in H^1(G_{\QQ,Np},\chibar^{-1})$.
Since ordinary deformations of $\rhob_c$ play an important role in our argument, we choose $c \in H^1_{\{p\}}(G_{\QQ,Np},\chibar^{-1})$ (which is non-zero by assumption) so that they exist.

Before moving on to the other difference between the strategies, note that very few finiteness results of the type alluded to in the first point above are available in the literature. 
Moreover, such results are not readily available under the hypotheses of Theorem~\ref{thmb}.
However, Pan (\cite{P}) has recently proved a finiteness result for \emph{pseudo-deformation} rings under much less restrictive hypotheses (see \cite[Theorem 5.1.2]{P}).
His result implies that if $R^{\ps}_{\rhob_0}$ is the universal deformation ring of the pseudo-representation associated to $\rhob_0$, then the map $R^{\ps}_{\rhob_0} \to R^{\ord}_{\rhob_c}$ induced by the pseudo-representation associated to universal ordinary deformation of $\rhob_c$ factors through a finite $\ZZ_p\llbracket T \rrbracket$-algebra.
When this map is surjective, his result gives us the desired finiteness statement.
However, this map is not always surjective.

In \cite{D2}, the assumption $\dim(H^1(G_{\QQ,Np},\chibar)) =1$ was used to get a surjection from $R^{\ps}_{\rhob_0}$ to $R(N)_{\rhob_c}$, the universal deformation ring of $\rhob_c$ as a representation of $G_{\QQ,Np}$.
This surjection was in turn used to prove that the map $R^{\ps}_{\rhob_0} \to R^{\ord}_{\rhob_c}$  is surjective (modulo some zero divisors).
Note that, under the assumptions of Theorem~\ref{thmb}, the map $R^{\ps}_{\rhob_0} \to R(N)_{\rhob_c}$ is not necessarily surjective (and in fact, it is not surjective most of the times).
Here we make a key observation that if $\dim(H^1_{\{p\}}(G_{\QQ,Np},\chibar^{-1}))=1$, then the surjectivity of the map $R^{\ps}_{\rhob_0} \to R(N)_{\rhob_c}$  is not needed to establish the surjectivity, modulo some zero divisors, of the map $R^{\ps}_{\rhob_0} \to R^{\ord}_{\rhob_c}$ (see Lemma~\ref{surjlem} and Lemma~\ref{surjlemma}).
This is the other key difference between the strategy adapted here and the strategy of \cite{D2}.

 In order to finish the construction mentioned in the first point above, we first fix a suitable lift $\Upsilon$ of $\det(\rhob_c)$ to $W(\FF)$. 
Let $R^{\ord,\det}_{\rhob_c}$ be the universal ordinary deformation ring of $\rhob_c$ with constant determinant $\Upsilon$.
Note that it is a quotient of $R^{\ord}_{\rhob_c}$. 
We then find a quotient $R'$ of $R^{\ord,\det}_{\rhob_c}$ by some zero divisors such that the map $R^{\ps}_{\rhob_0} \to R'$ obtained by passing to this quotient is surjective (see Lemma~\ref{surjlemma}).
The choice of $\rhob_c$, along with the hypothesis $\dim(H^1_{\{p\}}(G_{\QQ,Np},\chibar^{-1}))=1$, plays a crucial role in finding this quotient.

Now we need to prove that $R'$ has Krull dimension $1$ and has a prime ideal of the type mentioned in the second point above.
In order to do this, we prove a result relating the structures of the universal deformation rings of $\rhob_c$ for the groups $G_{\QQ,Mp}$ and $G_{\QQ,Np}$ (see Proposition~\ref{strprop}).
This result is a partial generalization of a similar result of B\"{o}ckle (\cite[Theorem 4.7]{Bo1}). The hypothesis $\dim(H^1_{\{p\}}(G_{\QQ,Np},\chibar^{-1}))=1$ and Pan's finiteness result play a crucial role in its proof.
This structure theorem then allows us to conclude that $R'$ has Krull dimension $1$ and has a prime ideal having desired properties from which Theorem~\ref{thmb} follows.

To prove Theorem~\ref{thme}, we follow a similar strategy. In this case, we choose a representation $\rhob_c : G_{\QQ, N \ell p} \to \GL_2(\FF)$ corresponding to a non-trivial element of $H^1_{\{p\}}(G_{\QQ,N\ell p},\chibar^{-1})$. Note that $\rhob_c$ is ramified at $\ell$. 
Here, we crucially use the relation between $R_{\rhob_c,\ell}$, the universal deformation ring of $\rhob_c|_{G_{\QQ_\ell}}$, and the universal deformation ring of $\rhob_c$ given by \cite[Theorem 3.1]{Bo3}.
To be precise, we first determine the structure of $R_{\rhob_c,\ell}$ (Lemma~\ref{localstrlem}). We then combine it with \cite[Theorem 3.1]{Bo3} and the arguments used in the proof of Theorem~\ref{thmb} to find a desired quotient $R'$ of $R^{\ord,\det}_{\rhob_c}$.

Note that the combination of Lemma~\ref{localstrlem} and \cite[Theorem 3.1]{Bo3} allows us to find modular lifts of $\rhob_c$ of two types: lifts which are irreducible at $\ell$ and lifts which are Steinberg at $\ell$.
The type of lifts depends on the quotient $R'$ that we take.
Combining this with the arguments used in the proof of Theorem~\ref{thmc}, we conclude that all these lifts are Steinberg at all the other level raising primes (if any).
 Thus the modular lifts which are irreducible at $\ell$ are used to prove Theorem~\ref{thme}, while the modular lifts which are Steinberg at $\ell$ take care of Theorem~\ref{thmc} when $p \mid \ell_0+1$.
%We then use ring theoretic properties of $R^{\ord,\det}_{\rhob_c}$ and description of the universal deformation at the inertia group at $\ell$ to find a prime corresponding to a newform of level $M'\ell^2$.

\subsection{Organization of the paper}
In \S\ref{strsec}, we study the structure of deformation rings of certain reducible, non-split representations.
To be precise, in \S\ref{defring}, we define the deformation and pseudo-deformation rings which we study and analyze ordinary deformations.
In \S\ref{relationsec}, we explore the relationship between deformation and pseudo-deformation rings to get some finiteness results for deformation rings.
In \S\ref{ramsec}, we introduce increasing of ramification and provide background results necessary to prove the main result of the section.
In \S\ref{mainsec}, we prove the main result of the section which determines the structure of the deformation ring after increasing the ramification.
In \S\ref{proofthm}, we give a proof of Theorem~\ref{thmb}.
In \S\ref{proofthm1}, we give a proof of Theorem~\ref{thmc}.
In \S\ref{proofthm2}, we prove Theorem~\ref{thme}.
In \S\ref{proofcor}, we prove all the corollaries.

\subsection{Notations and Conventions} 
\label{notsec}
 For an integer $M$, denote by $G_{\QQ,Mp}$ the Galois group of the maximal extension of $\QQ$ unramified outside primes dividing $Mp$ and $\infty$ over $\QQ$. Denote by $W(\FF)$ the ring of Witt vectors of $\FF$. 
 Let $\mathcal{C}$ be the category of complete, Noetherian, local $W(\FF)$-algebras with residue field $\FF$.
 For a character $\eta : G_{\QQ,Mp} \to \FF^{\times}$ and an object $R$ of $\mathcal{C}
 $, denote the Teichmuller lift of $\eta$ to $R^{\times}$ by $\widehat\eta$.
 %For an element $a \in \FF$, denote its Teichmuller lift in $W(\FF)$ by $\tilde a$.
 Denote the mod $p$ cyclotomic character of $G_{\QQ,Mp}$ by $\omega_p$ and the $p$-adic cyclotomic character of $G_{\QQ,Mp}$ by $\chi_p$.
 For a prime $\ell$, denote the absolute Galois group of $\QQ_{\ell}$ by $G_{\QQ_{\ell}}$ and denote its inertia subgroup by $I_{\ell}$.
 Let $\frob_\ell$ denote the Frobenius at $\ell$ in $G_{\QQ_{\ell}}/I_{\ell}$.
%Fix a lift of the Frobenius at $\ell$ in $G_{\QQ_{\ell}}$ and denote it by $\frob_{\ell}$.
For every prime $\ell$, fix an embedding $\iota_{\ell}: \overline{\QQ} \to \overline{\QQ}_{\ell}$ which in turn will give us a map $i_{\ell,M} : G_{\QQ_{\ell}} \to G_{\QQ,Mp}$.
If $\rho$ is a representation of $G_{\QQ,Mp}$, then denote by $\rho|_{G_{\QQ_{\ell}}}$ the representation $\rho \circ i_{\ell,M}$ of $G_{\QQ_{\ell}}$ and if $g \in G_{\QQ_{\ell}}$, then denote $\rho|_{G_{\QQ_{\ell}}}(g)$ by $\rho(g)$.
All the representations, pseudo-representations and Galois cohomology groups that we consider in this article are assumed to be continuous unless mentioned otherwise.
Given a representation $\rho$ of $G_{\QQ,Mp}$ over $\FF$, denote by $\dim(H^i(G_{\QQ,Mp},\rho))$, the dimension of $H^i(G_{\QQ,Mp},\rho)$ as a vector space over $\FF$.

\subsection{Acknowledgments}
I would like to thank Nicolas Billerey, Jaclyn Lang and Preston Wake for providing some helpful comments on an earlier draft of the article.
I would also like to thank the anonymous referees for a careful reading of the article and for providing many useful comments and suggestions which helped tremendously in improving the exposition and the main results of this article.
This work was partially supported by a Young Investigator Award from the Infosys Foundation, Bangalore and also by the DST FIST program - 2021 [TPN - 700661]. 

\section{Structure of deformation rings}
\label{strsec}
In this section, we introduce the deformation rings of certain reducible, non-split representations with semi-simplification $\rhob_0$ and study their relationship with universal deformation ring of the pseudo-representation $(\tr(\rhob_0),\det(\rhob_0))$.
As a consequence, we prove various results about their structure which will play a key role in the proofs of the main theorems.
Throughout this section, we assume that we are in the Set-up~\ref{setup}. 
We start by establishing some notation in the next subsection.

\subsection{Deformation rings and ordinary deformations}
\label{defring}
Observe that the hypotheses on $\chibar_1$ and $\chibar_2$ from Set-up~\ref{setup} imply that $\chibar|_{G_{\QQ_p}} \neq 1, \omega_p^{-1}$.
So, there exists a $g_0 \in G_{\QQ_p}$ such that $\chibar_1(g_0) \neq \chibar_2(g_0)$.
Fix such a $g_0 \in G_{\QQ_p}$.
Note that the restriction of $c$ to $\ker(\chibar)$ is a homomorphism $\ker(\chibar) \to \FF$ which we will also denote by $c$.
Fix an $h_0 \in G_{\QQ,Np}$ such that $\chibar(h_0)=1$ (i.e. $h_0 \in \ker(\chibar)$) and $c(h_0) \neq 0$.

For a non-zero element $c \in H^1(G_{\QQ,Np},\chibar^{-1})$, let $\rhob_c : G_{\QQ,Np} \to \GL_2(\FF)$ be the representation such that
\begin{enumerate}
\item $\rhob_c(g) = \begin{pmatrix} \chibar_2(g) & *\\ 0 & \chibar_1(g) \end{pmatrix}$ for all $g \in G_{\QQ,Np}$, where $*$ corresponds to $c$,
\item $\rhob_c(g_0) = \begin{pmatrix} \chibar_2(g_0) & 0\\ 0 & \chibar_1(g_0) \end{pmatrix}$,
\item $\rhob_c(h_0) = \begin{pmatrix} 1 & 1\\ 0 & 1 \end{pmatrix}$.
\end{enumerate}
Note that this means there exists a unique cocycle $\cF \in Z^1(G_{\QQ,Np},\chibar^{-1})$ such that image of $\cF$ in $H^1(G_{\QQ,Np},\chibar^{-1})$ is $c$ and $$\rhob_c(g) = \begin{pmatrix} \chibar_2(g) & \chibar_1(g)\cF(g)\\ 0 & \chibar_1(g) \end{pmatrix}$$ for all $g \in G_{\QQ,Np}$.

Denote by $R(N)_{\rhob_c}$ the universal deformation ring of $\rhob_c$ in $\mathcal{C}$ and let $\rho_N^{\univ} : G_{\QQ,Np} \to \GL_2(R(N)_{\rhob_c})$ be a (representation in the equivalence class giving the) universal deformation of $\rhob_c$. The existence of $R(N)_{\rhob_c}$ and $\rho_N^{\univ}$ follows from \cite{M} and \cite{Ra}.
Note that $(\tr(\rhob_0),\det(\rhob_0)) : G_{\QQ,Np} \to \FF$ is a pseudo-representation (in the sense of Chenevier \cite{C}) of $G_{\QQ,Np}$ of dimension $2$.
Denote by $R^{\ps}(N)_{\rhob_0}$ the universal deformation ring of the pseudo-representation $(\tr(\rhob_0),\det(\rhob_0))$ in $\mathcal{C}$. 
The existence of $R^{\ps}(N)_{\rhob_0}$ follows from \cite{C}.

If $M$ is an integer such that $N \mid M$, then we can also view $\bar\rho_c$ as a representation and $(\tr(\rhob_0),\det(\rhob_0))$ as a pseudo-representation of the group $G_{\QQ,Mp}$.
In this case, denote by $R(M)_{\rhob_c}$ (resp. by $R^{\ps}(M)_{\rhob_0}$) the universal deformation ring of $\rhob_c$ (resp. of $(\tr(\rhob_0),\det(\rhob_0))$) in $\mathcal{C}$ for the group $G_{\QQ,Mp}$ and let $\rho_M^{\univ} : G_{\QQ,Mp} \to \GL_2(R(M)_{\rhob_c})$ be a (representation in the equivalence class giving the) universal deformation of $\rhob_c$ for $G_{\QQ,Mp}$.
Denote by $\mathfrak{m}_M$ the maximal ideal of $R(M)_{\rhob_c}$.

By \cite[Lemma $3.1.1$]{D2}, there exists a $P \in \GL_2(R(M)_{\rhob_c})$ such that $P \equiv Id \pmod{\mathfrak{m}_M}$ and $P\rho_M^{\univ}(g_0)P^{-1} = \begin{pmatrix} a_0 & 0\\ 0 & b_0 \end{pmatrix}$.
So we can choose $\rho_M^{\univ}$ such that $\rho_M^{\univ}(g_0)$ is diagonal.
We make this choice and assume that $\rho_M^{\univ}(g_0)$ is diagonal throughout the article unless mentioned otherwise.

If $R$ is an object of $\mathcal{C}$, $M$ is an integer divisible by $N$ and $\rho : G_{\QQ,Mp} \to \GL_2(R)$ is a deformation of $\rhob_c$, then we say that $\rho$ is an \emph{ordinary} deformation of $\rhob_c$ if there exist characters $\eta_1,\eta_2 : G_{\QQ_p} \to R^{\times}$ such that 
\begin{enumerate}
\item $\eta_i$ is a lift of $\bar\chi_i|_{G_{\QQ_p}}$ for $i=1,2$, 
\item $\eta_2$ is unramified,
\item $\rho|_{G_{\QQ_p}} \simeq \begin{pmatrix} \eta_2 & 0 \\ * & \eta_1 \end{pmatrix}$.
\end{enumerate}

We now make a simple yet crucial observation:
\begin{lem}
\label{diagonallem}
Suppose $c \in H^1_{\{p\}}(G_{\QQ,Np},\chibar^{-1})$ is a non-zero element.
Let $M$ be an integer divisible by $N$ and $k$ be an integer such that $k \equiv k_0 \pmod{p-1}$.
There exist $\alpha, \beta, \delta_k \in R(M)_{\rhob_c}$ such that $\rho_M^{\univ} \pmod{(\alpha,\beta,\delta_k)}$ is an ordinary deformation of $\rhob_c$ with determinant $\epsilon_k\chi_p^{k-1}$, where $\epsilon_k$ is unramified at $p$.
\end{lem}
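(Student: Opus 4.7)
The plan is to build the three elements $\alpha,\beta,\delta_k$ one at a time, with roles: $\alpha$ cuts out a reducible (``ordinary-shaped'') quotient at $p$, $\beta$ forces the character lifting $\chibar_2$ on the diagonal to actually be unramified, and $\delta_k$ pins the determinant to the prescribed twist of $\chi_p^{k-1}$.

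First I would use the hypothesis $c \in H^1_{\{p\}}(G_{\QQ,Np},\chibar^{-1})$ to normalize the cocycle $f$ defining $\rhob_c$ so that $f|_{G_{\QQ_p}} = 0$. Indeed, $c|_{G_{\QQ_p}} = 0$ in $H^1(G_{\QQ_p},\chibar^{-1})$, so $f|_{G_{\QQ_p}} = d\phi$ for some $\phi \in \FF$; replacing $f$ by the globally cohomologous cocycle $f - d\phi$ does not change the class $c$ and makes $f|_{G_{\QQ_p}} = 0$. With this normalization, $\rhob_c|_{G_{\QQ_p}}$ becomes the diagonal representation $\chibar_2 \oplus \chibar_1$ and $\rhob_c(g_0)$ is automatically diagonal, consistent with the standing choice that $\rho_M^{\univ}(g_0)$ is diagonal.

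Let $R := R(M)_{\rhob_c}$. Because $\chibar_1(g_0)\neq\chibar_2(g_0)$, the residual characters of $\rho_M^{\univ}|_{G_{\QQ_p}}$ are distinct, so the pseudo-representation $(\tr\rho_M^{\univ}|_{G_{\QQ_p}},\det\rho_M^{\univ}|_{G_{\QQ_p}})$ decomposes canonically as a sum of two characters $\eta_1,\eta_2 \colon G_{\QQ_p} \to R^\times$ lifting $\chibar_1$ and $\chibar_2$ respectively. The locus inside $\mathrm{Spec}(R)$ over which $\rho_M^{\univ}|_{G_{\QQ_p}}$ becomes conjugate to $\bigl(\begin{smallmatrix}\eta_1 & * \\ 0 & \eta_2\end{smallmatrix}\bigr)$ is cut out by an ideal that, in light of the generalized matrix algebra (GMA) structure on the Cayley--Hamilton algebra of $\rho_M^{\univ}|_{G_{\QQ_p}}$ combined with the normalization $\rho_M^{\univ}(g_0)=\mathrm{diag}(u,v)$ with $u\not\equiv v\pmod{\mathfrak{m}_M}$, is principal; let $\alpha$ be a generator. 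Modulo $\alpha$, the restriction to $G_{\QQ_p}$ is upper triangular with $\eta_1$ and $\eta_2$ on the diagonal.

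For $\beta$, note that $\eta_2|_{I_p}$ takes values in $1+\mathfrak{m}_M$ (since $\chibar_2$ is unramified at $p$) and hence factors through the maximal pro-$p$ quotient of $I_p^{\mathrm{ab}}$, which by local class field theory is topologically cyclic; pick a topological generator $\tau$ and set $\beta := \eta_2(\tau)-1$, so that $\eta_2|_{I_p}$ becomes trivial modulo $(\alpha,\beta)$. Similarly, since $k\equiv k_0\pmod{p-1}$ and $\det\rhob_0=\bar\psi\,\omega_p^{k_0-1}$ with $\bar\psi$ unramified at $p$, the character $\det(\rho_M^{\univ})\chi_p^{1-k}$ lifts an unramified character; its restriction to $I_p$ again takes values in $1+\mathfrak{m}_M$, so set $\delta_k := \bigl(\det(\rho_M^{\univ})\chi_p^{1-k}\bigr)(\tau)-1$. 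Modulo $(\alpha,\beta,\delta_k)$ the representation $\rho_M^{\univ}$ is then ordinary at $p$ and has determinant $\epsilon_k\chi_p^{k-1}$ for a character $\epsilon_k$ of $G_{\QQ,Mp}$ unramified at $p$, as required.

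The main obstacle will be the principal generation of the reducibility ideal defining $\alpha$: a priori the ideal generated by all the upper-right entries of $\rho_M^{\univ}|_{G_{\QQ_p}}$ in the original basis (as $g$ ranges over $G_{\QQ_p}$) need not be principal. The resolution rests on the explicit splitting of $\rho_M^{\univ}(g_0)$ together with the residual distinctness of its eigenvalues, which yields a canonical orthogonal pair of idempotents in the Cayley--Hamilton algebra; through the GMA description this is expected to force the relevant off-diagonal $R$-module to be cyclic, whence principality. The analogous claims for $\beta$ and $\delta_k$ are routine, since pro-$p$ characters of $I_p$ with values in $1+\mathfrak{m}_M$ factor through the pro-cyclic pro-$p$ quotient of $I_p^{\mathrm{ab}}$ and are therefore determined by their value on $\tau$.
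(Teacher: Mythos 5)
Your overall architecture matches the paper's: kill the off-diagonal ideal at $p$ to get a triangular local shape ($\alpha$), then force the relevant diagonal character to be unramified ($\beta$), then pin the determinant on wild inertia ($\delta_k$). Your preliminary normalization that $f|_{G_{\QQ_p}}=0$ is also consistent with the paper, and is in fact forced once one requires $f(g_0)=0$ together with $c|_{G_{\QQ_p}}=0$. The constructions of $\beta$ and $\delta_k$ via the pro-$p$ quotient of $I_p^{\mathrm{ab}}$ are fine.

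The gap is in your justification of why $\alpha$ exists, i.e.\ why the off-diagonal ideal $B_p$ of the generalized matrix algebra $R[\rho_M^{\univ}(G_{\QQ_p})]$ is principal. You write that the ``canonical orthogonal pair of idempotents'' coming from the split of $\rho_M^{\univ}(g_0)$, ``through the GMA description,'' should force the off-diagonal $R$-module $B_p$ to be cyclic. That is not so: distinct residual eigenvalues of $\rho_M^{\univ}(g_0)$ give you the GMA block decomposition $\begin{pmatrix} R & B_p \\ C_p & R\end{pmatrix}$, but say nothing about the number of generators of $B_p$ --- a priori $B_p$ could need many generators. What actually controls this is the local cohomology: by \cite[Theorem~1.5.5]{BC} there is an injection $\Hom_{\FF}(B_p/\mathfrak{m}_M B_p,\FF) \hookrightarrow H^1(G_{\QQ_p},\chibar^{-1})$, and the Set-up hypotheses (namely $\chibar_2$ unramified at $p$ and $1<k_0<p$) force $\chibar|_{G_{\QQ_p}}\neq 1,\omega_p^{-1}$, so the local Euler characteristic formula gives $\dim_{\FF} H^1(G_{\QQ_p},\chibar^{-1})=1$. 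Combined with Nakayama this is precisely what makes $B_p$ principal. Without invoking this cohomological bound your argument does not close; this is the one genuinely non-formal ingredient of the lemma and must be supplied.
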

\begin{proof}
Recall that $\rho_M^{\univ}(g_0) = \begin{pmatrix} a_0 & 0\\ 0 & b_0 \end{pmatrix}$.
Thus we have $a_0 \pmod{\mathfrak{m}_M}=\chibar_2(g_0)$, $b_0 \pmod{\mathfrak{m}_M} =\chibar_1(g_0)$ and hence, $a_0 \not\equiv b_0 \pmod{\mathfrak{m}_M}$.
Since $g_0 \in G_{\QQ_p}$, it follows, from \cite[Lemma $2.4.5$]{Bel}, that there exist ideals $B_p$ and $C_p$ of $R(M)_{\rhob_c}$ such that $$R(M)_{\rhob_c}[\rho_M^{\univ}(G_{\QQ_p})] = \begin{pmatrix} R(M)_{\rhob_c} & B_p \\ C_p & R(M)_{\rhob_c}\end{pmatrix}.$$
Now $c \in H^1_{\{p\}}(G_{\QQ,Np},\chibar^{-1})$ which means $\rhob_c(G_{\QQ_p})$ is an abelian group. Moreover, $\rhob_c(g_0)$ is a non-scalar diagonal matrix.
Hence, $\rhob_c(g)$ is diagonal for all $g \in G_{\QQ_p}$.
So it follows that $B_p, C_p \subset \mathfrak{m}_M$.
 %Since $\chibar|_{G_{\QQ_p}} \neq 1, \omega_p^{-1}$, by local Euler characteristic formula, it follows that $\dim_{\FF}H^1(G_{\QQ_p},\chibar^{-1}) =1$. 

Since $\chibar|_{G_{\QQ_p}} \neq 1, \omega_p^{-1}$, by local Euler characteristic formula, it follows that $\dim_{\FF}(H^1(G_{\QQ_p},\chibar^{-1})) =1$. So, by \cite[Theorem $1.5.5$]{BC}, the ideal $B_p$ is generated by at most $1$ element (see also Part $(5)$ of \cite[Lemma $2.4$]{D} and its proof).
 Let $\alpha$ be a generator of the ideal $B_p$ if $B_p \neq (0)$ and $0$ otherwise. Note that $\alpha \in \mathfrak{m}_M$ because $B_p \subset \mathfrak{m}_M$.

 %Let $I_p$ be the inertia group at $p$. 
 Recall that if $G_{\QQ_p}^{\text{ab}}$ is the maximal abelian quotient of $G_{\QQ_p}$, then $$G_{\QQ_p}^{\text{ab}} \simeq \ZZ/(p-1)\ZZ \times (1+p\ZZ_p) \times \hat\ZZ.$$ Choose a lift $i_p$ of a topological generator of $1+p\ZZ_p$ in $I_p$.
So $\det(\rho_M^{\univ}(i_p)) = 1+\gamma$ for some $\gamma \in \mathfrak{m}_M$. Let $\delta_k = 1 + \gamma - \chi_p^{k-1}(i_p)$. Note that $\delta_k \in \mathfrak{m}_M$.

If $\rho_M^{\univ}(i_p) = \begin{pmatrix} a & b\\ c & d \end{pmatrix}$ then $a=\hat\chibar_2(i_p)(1+\beta)$ for some $\beta \in \mathfrak{m}_M$. 
Now let $R := R(M)_{\rhob_c}/(\alpha,\beta,\delta_k)$ and $\rho := \rho_M^{\univ} \pmod{(\alpha,\beta,\delta_k)}$.
Then it is easy to verify that $\rho : G_{\QQ,Mp} \to \GL_2(R)$ is an ordinary deformation of $\rhob_c$ with determinant $\epsilon_k\chi_p^{k-1}$, where $\epsilon_k$ is unramified at $p$ (see proof of \cite[Lemma $3.1.3$]{D2} for more details).
%Since $B_p$ is generated by $\alpha$, it follows that $\rho|_{G_{\QQ_p}}$ is of the form $\begin{pmatrix} \eta_1 & *\\ 0 & \eta_2\end{pmatrix}$ for characters $\eta_1, \eta_2 : G_{\QQ_p} \to R^{\times}$ lifting $\chibar_1$ and $\chibar_2$, respectively. 
%From definition of $\beta$, it follows that $\eta_2|_{I_p}=\hat\chibar_2|_{I_p}$. 
%Since the image of $i_p$ in $G_{\QQ_p}^{\text{ab}}$ is a topological generator of the pro-$p$ part of the image of $I_p$ in $G_{\QQ_p}^{\text{ab}}$, it follows, from definition of $\delta_k$, that $\det(\rho) \chi_p^{1-k}$ is unramified at $p$.
%This proves the lemma.
%Now the lemma in Case $2$ follows from the defintions of $\beta$, $\delta_k$ and the argument used in the first case.
\end{proof}

\begin{rem}
If $p \nmid \phi(M)$, $k$ is an integer such that $k \equiv k_0 \pmod{p-1}$ and $\alpha$, $\beta$, $\delta_k \in R(M)_{\rhob_c}$ are elements found in Lemma~\ref{diagonallem}, then $R(M)_{\rhob_c}/(\alpha,\beta,\delta_k)$ is isomorphic to the universal ordinary deformation ring $\mathcal{R}$ of $\rhob_c$ with constant determinant $\widehat{\bar\psi}\chi_p^{k-1}$.
Indeed, if $\xi : R(M)_{\rhob_c} \to \mathcal{R}$ is the surjective map induced by the universal ordinary deformation of $\rhob_c$ corresponding to $\mathcal{R}$, then Lemma~\ref{diagonallem} implies that $\ker(\xi) \subset (\alpha,\beta,\delta_k)$.
On the other hand, there exists an $\mathcal{N} \in \GL_2(\mathcal{R})$ such that $\mathcal{N}(\xi \circ \rho^{\univ}_M|_{G_{\QQ_p}})\mathcal{N}^{-1} = \begin{pmatrix} \eta_2 & 0 \\ * & \eta_1\end{pmatrix}$ with $\eta_i$ lifting $\chibar_i$ and $\eta_2$ an unramified character of $G_{\QQ_p}$.
From the description of $\rho^{\univ}_M(g_0)$ given above, it follows that $\xi(a_0) = \eta_2(g_0)$ and $\xi(b_0) = \eta_1(g_0)$.
This implies that $\mathcal{N}$ is a lower triangular matrix and hence, $\xi \circ \rho^{\univ}_M|_{G_{\QQ_p}} = \begin{pmatrix} \eta_2 & 0 \\ * & \eta_1\end{pmatrix}$. 
This allows us to conclude that $\xi(\alpha)=0$.
As $\eta_2$ is an unramified character, it follows, from definition of $\beta$, that $\xi(\beta)=0$.
Since $\det(\xi \circ \rho^{\univ}_M) = \widehat{\bar\psi}\chi_p^{k-1}$, we obtain, using definition of $\delta_k$, that $\xi(\delta_k)=0$.
Thus we get $(\alpha,\beta,\delta_k) \subset \ker(\xi)$ which proves our claim.
\end{rem}
%So from now on, we can and will assume that $\rho_M^{\univ}(g_0)$ is diagonal.
\subsection{Relationship between $R(M)^{\ps}_{\rhob_0}$ and $R(M)_{\rhob_c}$}
\label{relationsec}
We now study the relation between the universal deformation ring $R(M)_{\rhob_c}$ and the universal pseudo-deformation ring $R(M)^{\ps}_{\rhob_0}$.
To do this, we first analyze the reducibility properties of deformations of $\rhob_c$.

\begin{lem}
\label{redlem}
Suppose $c \in H^1(G_{\QQ,Np},\chibar^{-1})$ is a non-zero element.
Let $M$ be an integer divisible by $N$, $R$ be an object in $\mathcal{C}$ with maximal ideal $m_R$ and $\rho : G_{\QQ,Mp} \to \GL_2(R)$ be a deformation of $\rhob_c$.
If there exist characters $\chi_1, \chi_2 : G_{\QQ,Mp} \to R^{\times}$ such that $\chi_i$ is a lift of $\chibar_i$ for $i=1,2$ and $\tr(\rho)=\chi_1+\chi_2$, then there exists a $P \in \GL_2(R)$ such that $P \equiv Id \pmod{m_R}$, $P \rho(g_0) P^{-1}$ is diagonal and $P \rho P^{-1} = \begin{pmatrix} \chi_2 & * \\ 0 & \chi_1\end{pmatrix}$.
\end{lem}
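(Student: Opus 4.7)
The plan is to exhibit a single conjugation, by a matrix $P \equiv \mathrm{Id} \pmod{m_R}$, that simultaneously diagonalizes $\rho(g_0)$ and puts $\rho$ in the form $\begin{pmatrix}\chi_2 & \ast\\ 0 & \chi_1\end{pmatrix}$. The three steps are: diagonalize $\rho(g_0)$ in the correct order; use the trace identity to read off the diagonal entries of $\rho(g)$; use the product formula together with non-splitness of $\rhob_c$ to kill the lower-left entry.

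First I would diagonalize $\rho(g_0)$. Since $\chibar_1(g_0) \neq \chibar_2(g_0)$, the matrix $\rhob_c(g_0)$ has distinct eigenvalues in $\FF$, so Hensel's lemma factors the characteristic polynomial of $\rho(g_0)$ over $R$ and lifts the standard eigenbasis of $\rhob_c(g_0)$ to an eigenbasis of $\rho(g_0)$. Because $p \neq 2$, the formula $\det\rho(g) = \tfrac{1}{2}(\tr\rho(g)^2 - \tr\rho(g^2))$ combined with $\tr\rho = \chi_1 + \chi_2$ yields $\det\rho = \chi_1\chi_2$; in particular $\chi_1(g_0)$ and $\chi_2(g_0)$ are exactly the roots of the characteristic polynomial of $\rho(g_0)$. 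Matching each root with its unique mod-$m_R$ lift, the resulting change-of-basis matrix $P$ gives $P\rho(g_0)P^{-1} = \mathrm{diag}(\chi_2(g_0),\chi_1(g_0))$ with $P \equiv \mathrm{Id} \pmod{m_R}$. Replace $\rho$ by $P\rho P^{-1}$ and write $\rho(g) = \begin{pmatrix} a(g) & b(g) \\ c(g) & d(g)\end{pmatrix}$.

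Next I would determine $a(g)$ and $d(g)$. Applying $\tr\rho = \chi_1 + \chi_2$ to $g$ and to $gg_0$ gives the linear system
\begin{align*}
a(g) + d(g) &= \chi_1(g) + \chi_2(g), \\
a(g)\chi_2(g_0) + d(g)\chi_1(g_0) &= \chi_1(g)\chi_1(g_0) + \chi_2(g)\chi_2(g_0),
\end{align*}
whose coefficient determinant $\chi_1(g_0) - \chi_2(g_0)$ is a unit in $R$. Solving uniquely yields $a(g) = \chi_2(g)$ and $d(g) = \chi_1(g)$.

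Finally, the $(1,1)$ entry of the product formula $\rho(gh) = \rho(g)\rho(h)$ reads $\chi_2(g)\chi_2(h) + b(g)c(h) = \chi_2(gh)$, forcing $b(g)c(h) = 0$ for all $g,h$. Since $\rho \bmod m_R = \rhob_c$ is the non-split upper-triangular extension corresponding to the non-zero class $c$, there is some $g_1$ for which the upper-right entry $b(g_1)$ is a unit in $R$. Hence $c(h) = 0$ for all $h$, producing the desired $P\rho P^{-1} = \begin{pmatrix}\chi_2 & \ast \\ 0 & \chi_1\end{pmatrix}$. The one delicate point is lifting the eigenbasis of $\rhob_c(g_0)$ in the correct order so that $\chi_2(g_0)$ lands in the $(1,1)$ slot; everything else follows directly from the trace identity and the non-splitness of $\rhob_c$.
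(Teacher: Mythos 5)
Your proposal is correct and follows essentially the same route as the paper: diagonalize $\rho(g_0)$ with eigenvalues matched mod $m_R$ to $\chi_2(g_0)$ and $\chi_1(g_0)$, read off the diagonal entries of $\rho$ from the trace identity applied at $g$ and at $g_0g$, and use non-splitness of $\rhob_c$ together with multiplicativity in the $(1,1)$ slot to force the lower-left entry to vanish. The only cosmetic difference is that you make explicit the identity $\det\rho = \chi_1\chi_2$ (via the Newton identity, using $p\neq 2$), which the paper invokes tacitly when it factors the characteristic polynomial of $\rho(g_0)$ as $(X-\chi_1(g_0))(X-\chi_2(g_0))$.
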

\begin{proof}
The proof is similar to those of \cite[Lemme $1$]{BC1} and \cite[Lemma $3.2$]{D3}.
But we give the details here for the sake of completion.
By \cite[Lemma $3.1.1$]{D2}, we get a $P \in \GL_2(R)$ such that $P \equiv Id \pmod{m_R}$ and $P\rho(g_0)P^{-1} = \begin{pmatrix} a_0 & 0\\ 0 & b_0 \end{pmatrix}$ with $a_0 \pmod{m_R}=\chibar_2(g_0)$ and $b_0 \pmod{m_R} =\chibar_1(g_0)$.
So $a_0$ and $b_0$ are roots of the polynomial $$X^2-\tr(\rho(g_0))X +\det(\rho(g_0)) =  (X-\chi_1(g_0))(X-\chi_2(g_0)).$$
As $a_0 \not\equiv b_0 \pmod{m_R}$, it follows that $a_0=\chi_2(g_0)$ and $b_0 = \chi_1(g_0)$.

Denote the representation $P \rho P^{-1}$ by $\rho'$.
Now suppose $\rho'(g) = \begin{pmatrix} a_g & b_g \\ c_g & d_g \end{pmatrix}$ for $g \in G_{\QQ,Mp}$. Then $a_g+d_g = \chi_2(g)+\chi_1(g)$ and $$\tr(\rho'(g_0g)) = a_0a_g+b_0d_g = \chi_2(g_0g)+\chi_1(g_0g) = \chi_2(g_0)\chi_2(g)+\chi_1(g_0)\chi_1(g) = b_0\chi_2(g)+a_0\chi_1(g).$$
Therefore, we get $(a_0-b_0)d_g = (a_0-b_0)\chi_1(g)$. As $a_0 - b_0 \not\in m_R$, we get that $d_g =\chi_1(g)$ and hence, $a_g =\chi_2(g)$.
So $$\rho'(g) = \begin{pmatrix} \chi_2(g) & b_g \\ c_g & \chi_1(g) \end{pmatrix}$$ for all $g \in G_{\QQ,Mp}$.

As $\rho'$ is a lift of $\rhob_c$, it follows that there exists a $g' \in G_{\QQ,Mp}$ with $b_{g'} \in R^{\times}$.
Thus $\chi_2(g'g) = \chi_2(g')\chi_2(g)+b_{g'}c_g$ for all $g \in G_{\QQ,Mp}$.
Therefore, we get that $c_g=0$ for all $g \in G_{\QQ,Mp}$ which proves the lemma.
\end{proof}

This allows us to find a quotient of the deformation ring of $\rhob_c$ on which the pseudo-deformation ring of $\rhob_0$ surjects:
\begin{lem}
\label{surjlem}
Let $M$ be an integer divisible by $N$ and suppose $\dim(H^1_{\{p\}}(G_{\QQ,Mp},\chibar^{-1})) =1$.
Let $c \in H^1_{\{p\}}(G_{\QQ,Mp},\chibar^{-1})$ be a non-zero element and $\alpha \in R(M)_{\rhob_c}$ be the element found in Lemma~\ref{diagonallem}.
The morphism $\phi : R(M)^{\ps}_{\rhob_0} \to R(M)_{\rhob_c}/(\alpha)$ induced by $(\tr(\tilde\rho),\det(\tilde\rho))$, where $\tilde\rho := \rho_M^{\univ}\pmod{(\alpha)}$, is surjective.
\end{lem}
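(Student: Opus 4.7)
The plan is to prove surjectivity by exhibiting, inside the image of $\phi$, generators for the target ring $R(M)_{\rhob_c}/(\alpha)$. The argument proceeds in three stages: decompose the pseudo-representation, pass to an upper-triangular matrix model via Lemma~\ref{redlem}, and identify the resulting extension cocycle using the one-dimensional Selmer hypothesis.

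For the first stage, since $\chibar_1(g_0)\neq\chibar_2(g_0)$, Hensel's lemma applied to the characteristic polynomial, combined with Chenevier's theory of pseudo-characters (i.e., the Nyssen--Rouquier factorization in the residually-distinct-character case), yields a unique decomposition $(\tr(\rho'),\det(\rho'))=(\chi_1+\chi_2,\chi_1\chi_2)$ with characters $\chi_1,\chi_2:G_{\QQ,Mp}\to(R(M)_{\rhob_c}/(\alpha))^{\times}$ lifting $\chibar_1,\chibar_2$. Both values $\chi_i(g)$ lie in the image $A:=\phi(R(M)^{\pd}_{\rhob_0})$ because $(\chi_1(g),\chi_2(g))$ can be recovered by solving a $2\times 2$ linear system in $\tr\rho'(g)$, $\tr\rho'(g_0g)$, and $\det\rho'(g)$, using that $\chi_1(g_0)-\chi_2(g_0)$ is a unit.

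Lemma~\ref{redlem} then furnishes $P\equiv\mathrm{Id}\pmod{\mathfrak{m}_M}$ with $P\rho'(g_0)P^{-1}$ diagonal and $P\rho'P^{-1}=\begin{pmatrix}\chi_2 & f\\0 & \chi_1\end{pmatrix}$ for a cocycle $f$ valued in $\chi_2\chi_1^{-1}$ with $f(g_0)=0$. The diagonality of $P\rho'(g_0)P^{-1}$ together with the distinct diagonal entries of $\rho'(g_0)$ forces $P$ itself to be diagonal; since the upper-right entries of $\rho_M^{\univ}(g)$ for $g\in G_{\QQ_p}$ lie in $B_p=(\alpha)$, the restriction $\rho'|_{G_{\QQ_p}}$ is lower-triangular, and diagonal conjugation of a lower-triangular matrix can produce an upper-triangular one only if the original was already diagonal. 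Hence $\rho'|_{G_{\QQ_p}}$ is diagonal and $f|_{G_{\QQ_p}}=0$. Since conjugation by $P$ preserves the $W(\FF)$-subalgebra generated by matrix entries, surjectivity of $\phi$ reduces to showing that $f(g)\in A$ for every $g\in G_{\QQ,Mp}$.

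The cocycle $f$ then represents a class in $H^1_{\{p\}}(G_{\QQ,Mp},(R(M)_{\rhob_c}/(\alpha))(\chi_2\chi_1^{-1}))$ whose reduction modulo $\mathfrak{m}_M$ equals $\chibar_1\cdot f_c$, with $f_c$ a fixed representative of the generator $c$ of $H^1_{\{p\}}(G_{\QQ,Mp},\chibar^{-1})$. Via a topological Nakayama / base-change argument one identifies this Selmer group as a cyclic module generated by a cocycle that can be chosen $A$-valued; the normalizations $f(g_0)=0$ (which kills the coboundary freedom since $\chibar(g_0)\neq 1$) and $f|_{G_{\QQ_p}}=0$ then pin $f$ down as exactly this $A$-valued cocycle. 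The main obstacle will be this final base-change step: propagating the rank-one hypothesis $\dim H^1_{\{p\}}(G_{\QQ,Mp},\chibar^{-1})=1$ from $\FF$-coefficients to $R(M)_{\rhob_c}/(\alpha)$-coefficients twisted by a lift of $\chibar^{-1}$, and extracting from it the $A$-valuedness of $f$. Everything before this stage is formal, resting only on Chenevier's decomposition, Lemma~\ref{redlem}, and the explicit structure of $B_p=(\alpha)$.
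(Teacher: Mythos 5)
Your first stage contains a fatal error. You assert that the pseudo-representation $(\tr(\rho'),\det(\rho'))$ over $R(M)_{\rhob_c}/(\alpha)$ decomposes as $(\chi_1+\chi_2,\chi_1\chi_2)$ for genuine characters $\chi_1,\chi_2:G_{\QQ,Mp}\to(R(M)_{\rhob_c}/(\alpha))^{\times}$. This is false in general. The residual pseudo-representation $(\chibar_1+\chibar_2,\chibar_1\chibar_2)$ decomposes because $\rhob_0$ is semisimple, but a lift of a split pseudo-representation need not split: what the residual multiplicity-free structure theory (Nyssen--Rouquier, Bella\"iche--Chenevier) gives you is a generalized matrix algebra model, not two characters. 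Indeed, if $(\tr(\rho'),\det(\rho'))$ were a sum of characters, Lemma~\ref{redlem} would render $\rho'$ globally upper-triangular, i.e.\ \emph{every} deformation of $\rhob_c$ arising from $R(M)_{\rhob_c}/(\alpha)$ would be reducible; that is not the case (and if it were, the finiteness and Krull-dimension arguments downstream in the paper would collapse). Since stages two and three are built on the nonexistent $\chi_1,\chi_2$, the argument does not get off the ground.

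The paper proves surjectivity dually, and the contrast is worth internalizing. For a morphism in $\mathcal{C}$, surjectivity is equivalent to surjectivity on relative cotangent spaces, equivalently: for every nonzero $W(\FF)$-algebra map $f:R(M)_{\rhob_c}/(\alpha)\to\FF[\epsilon]/(\epsilon^2)$, the composite $f\circ\phi$ is nonzero. So the paper reduces to showing that if $\tr(f\circ\rho')=\tr(\rhob_0)$ and $\det(f\circ\rho')=\det(\rhob_0)$ then $f\circ\rho'\simeq\rhob_c$. \emph{Now} the pseudo-representation of $f\circ\rho'$ really is a sum of characters (it equals $(\chibar_1+\chibar_2,\chibar_1\chibar_2)$ by hypothesis), so Lemma~\ref{redlem} legitimately applies; the upper-triangular model, the diagonality at $g_0$, the computation of the resulting cocycle $c'\in H^1_{\{p\}}(G_{\QQ,Mp},\chibar^{-1})$, and the one-dimensionality of that Selmer group then force $f\circ\rho'\simeq\rhob_c$. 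Your instincts about what the key ingredients are — Lemma~\ref{redlem}, the lower-triangular shape of $\rho'|_{G_{\QQ_p}}$ after killing $\alpha$, and the rank-one Selmer hypothesis pinning down the extension class — are all correct and all appear in the paper's proof; what is missing from your write-up is the tangent-space dualization that makes the character splitting available in the first place.
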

\begin{proof}
Denote the maximal ideals of $R(M)^{\ps}_{\rhob_0}$ and $R' := R(M)_{\rhob_c}/(\alpha)$ by $\mathfrak{n}_M$ and $\bar{\mathfrak{m}}_M$, respectively.
We claim that to prove the lemma, it is enough to prove that the ideal $I$ of $R'$ generated by $\phi(\mathfrak{n}_M)$ is $\bar{\mathfrak{m}}_M$.
To prove this claim, note that $\phi(\mathfrak{n}_M) \subset \bar{\mathfrak{m}}_M$ and the residue fields of $R(M)^{\ps}_{\rhob_0}$ and $R'$ are the same.
Hence, if $I = \bar{\mathfrak{m}}_M$, then we get that the map induced by $\phi$ on the corresponding graded rings (associated to the filtrations by the powers of respective maximal ideals) is surjective.
Therefore, we conclude, using \cite[Lemma 10.23]{AM}, that $\phi$ is surjective.

We will now prove that $I = \bar{\mathfrak{m}}_M$.
Let $R'' = R'/I$. So $R''$ is an $\FF$-algebra.
Observe that $I = \bar{\mathfrak{m}}_M$ if and only if $R'' \simeq \FF$.
Now $R'' \simeq \mathbb{F}$ if and only if there does not exist a surjective map $f : R'' \to \FF[\epsilon]/(\epsilon^2)$.
Let $f : R'' \to \FF[\epsilon]/(\epsilon^2)$ be a map and let $\rho := f \circ \tilde\rho$.
%Thus, $\rho$ is an ordinary deformation of $\rhob_c$ such that $\tr(\rho)=\tr(\rhob_0)$ and $\det(\rho) = \det(\rhob_0)$.
Since $R''$ is a quotient of $R(M)_{\rhob_c}$, it follows that $f$ is surjective if and only if $\rho \not\simeq \rhob_c \otimes_{\FF} \FF[\epsilon]/(\epsilon^2)$.
Hence, to prove the lemma, it suffices to prove that $\rho \simeq \rhob_c \otimes_{\FF} \FF[\epsilon]/(\epsilon^2)$.
%if $\rho : G_{\QQ,Mp} \to \GL_2(\FF[\epsilon]/(\epsilon^2))$ is an ordinary deformation of $\rhob_c$ such that $\tr(\rho)=\tr(\rhob_0)$ and $\det(\rho) = \det(\rhob_0)$, then $\rho \simeq \rhob_c \otimes_{\FF} \FF[\epsilon]/(\epsilon^2)$.

%So suppose $\rho : G_{\QQ,Mp} \to \GL_2(\FF[\epsilon]/(\epsilon^2))$ is an ordinary deformation of $\rhob_c$ with $\tr(\rho)=\tr(\rhob_0)$ and $\det(\rho) = \det(\rhob_0)$.
Note that $\tr(\rho)=\tr(\rhob_0)$ and $\det(\rho) = \det(\rhob_0)$.
Then, from Lemma~\ref{redlem}, we know that there exists a $P \in \GL_2(\FF[\epsilon]/(\epsilon^2))$ such that $P \equiv Id \pmod{(\epsilon)}$ $$ P \rho P^{-1}= \begin{pmatrix} \chibar_2 & * \\ 0 & \chibar_1\end{pmatrix} \text{ and }P \rho(g_0) P^{-1} = \begin{pmatrix} \chibar_2(g_0)  & 0 \\ 0 & \chibar_1(g_0)\end{pmatrix}.$$

 From our choice of $\rho_M^{\univ}$, we get that $\rho(g_0)= \begin{pmatrix} \tilde{a_0} & 0\\ 0 & \tilde{b_0} \end{pmatrix}$.
So $\tilde{a_0}$ and $\tilde{b_0}$ are roots of the polynomial  $X^2-\tr(\rhob_0(g_0))X +\det(\rhob_0(g_0)) =  (X-\chibar_1(g_0))(X-\chibar_2(g_0)).$
As $\chibar_1(g_0) \neq \chibar_2(g_0)$, we conclude that $\tilde{a_0} = \chibar_2(g_0)$, $\tilde{b_0} = \chibar_1(g_0)$ and hence, $$\rho(g_0) = \begin{pmatrix}  \chibar_2(g_0)  & 0 \\ 0 & \chibar_1(g_0) \end{pmatrix} = P \rho(g_0) P^{-1}.$$
Since $\chibar_2(g_0) \neq \chibar_1(g_0)$, this implies that $P$ is a diagonal matrix.
So $\rho$ is also of the form $\begin{pmatrix} \chibar_2 & * \\ 0 & \chibar_1\end{pmatrix}$.

This means that there exists a cocycle $\cF' \in Z^1(G_{\QQ,Mp},\chibar^{-1})$ such that 
$$\rho(g) = \begin{pmatrix} \chibar_2(g) &\chibar_1(g)( \cF(g)+\epsilon \cF'(g)) \\ 0  & \chibar_1(g) \end{pmatrix} \text{ for all } g \in G_{\QQ,Mp}.$$
Let $c'$ be the image of $\cF'$ in $H^1(G_{\QQ,Mp},\chibar^{-1})$.

%Therefore, by \cite[Lemma $2.4.5$]{Bel}, there exist ideals $B$ and $C$ of $\FF[\epsilon]/(\epsilon^2)$ such that 
%$$\FF[\epsilon]/(\epsilon^2)[\rho(G_{\QQ,Mp})] = \begin{pmatrix} \FF[\epsilon]/(\epsilon^2) & B \\ C & \FF[\epsilon]/(\epsilon^2)\end{pmatrix}.$$

%As $\rho$ is a deformation of $\rhob_c$, it follows that $B=\FF[\epsilon]/(\epsilon^2)$.
%Since $\tr(\rho)=\tr(\rhob_0) = \chibar_1+\chibar_2$, the proofs of \cite[Lemme $1$]{BC1} and \cite[Lemma $3.2$]{D3} imply that $C=0$ and there exist a $c' \in H^1(G_{\QQ,Mp},\chibar^{-1})$ such that 
%$\rho(g) = \begin{pmatrix} \chibar_2(g) & c(g)+\epsilon c'(g) \\ 0  & \chibar_1(g) \end{pmatrix}$ for all $g \in G_{\QQ,Mp}$.

It follows, from the proof of Lemma~\ref{diagonallem} and the definition of $\alpha$, that $\tilde\rho|_{G_{\QQ_p}} = \begin{pmatrix} \eta_2 & 0 \\ * & \eta_1 \end{pmatrix}$ for some characters $\eta_2$ and $\eta_1$ lifting $\bar\chi_2$ and $\bar\chi_1$, respectively.
Since $\rho = f \circ \tilde\rho$, the previous paragraph implies that $$\rho(g) = \begin{pmatrix} \chibar_2(g) & 0 \\ 0 & \chibar_1(g) \end{pmatrix} \text{ for all }g \in G_{\QQ_p}.$$
Hence, $c' \in H^1_{\{p\}}(G_{\QQ,Mp},\chibar^{-1})$. Since $\dim(H^1_{\{p\}}(G_{\QQ,Mp},\chibar^{-1}))=1$ and $\rho(g_0)$ is diagonal, it follows that there exist a $x_0 \in \FF$ such that $\cF'=x_0\cF$.
Therefore, conjugating $\rho$ by $\begin{pmatrix} 1 - x_0\epsilon & 0\\ 0 & 1\end{pmatrix}$, we get $\rhob_c \otimes_{\FF} \FF[\epsilon]/(\epsilon^2)$. This proves the lemma.
\end{proof}

As a consequence, we get:

\begin{lem}
\label{dimlem}
Let $M$ be an integer divisible by $N$. Suppose $\dim(H^1_{\{p\}}(G_{\QQ,Mp},\chibar^{-1})) =1$ and $p \nmid \phi(M)$.
Let $c \in H^1_{\{p\}}(G_{\QQ,Mp},\chibar^{-1})$ be a non-zero element, $k$ be an integer such that $k \equiv k_0 \pmod{p-1}$ and $\alpha, \beta, \delta_k \in R(M)_{\rhob_c}$ be the elements found in Lemma~\ref{diagonallem}.
Then 
\begin{enumerate}
\item $R(M)_{\rhob_c}$ is a local complete intersection ring of Krull dimension $4$,
\item $R(M)_{\rhob_c}/(\alpha,\beta,\delta_k)$ is a finite $W(\FF)$-algebra and a local complete intersection ring of Krull dimension $1$.
\end{enumerate}
\end{lem}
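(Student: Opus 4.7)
The plan is to prove Part (1) first, namely that $R(M)_{\rhob_c}$ is a local complete intersection of Krull dimension $4$, and then to deduce Part (2) via a regular-sequence argument. The dimension will be pinned down by matching an upper bound coming from Lemma~\ref{surjlem} together with an explicit computation of $\dim R^{\ps}(M)_{\rhob_0}$, against a lower bound coming from a tangent-obstruction presentation; the equality of these two bounds will automatically force the LCI property.

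For the upper bound, I would first identify $R^{\ps}(M)_{\rhob_0}$ explicitly. Under $p \nmid \phi(M)$, class field theory shows that the pro-$p$ quotient of $G^{\text{ab}}_{\QQ,Mp}$ is isomorphic to $\ZZ_p$ (only the local units at $p$ contribute a pro-$p$ factor). Hence the universal deformation ring of each character $\chibar_i$ is isomorphic to $W(\FF)[[T_i]]$, of Krull dimension $2$. Since $\chibar_1 \neq \chibar_2$, the residually multiplicity-free theory of pseudo-deformations (\cite{BC}, \cite{C}) identifies $R^{\ps}(M)_{\rhob_0}$ with the completed tensor product $R^{\chibar_1} \hat\otimes_{W(\FF)} R^{\chibar_2} \cong W(\FF)[[T_1,T_2]]$, of Krull dimension $3$. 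Combined with the surjection of Lemma~\ref{surjlem}, this gives $\dim R(M)_{\rhob_c}/(\alpha) \leq 3$, hence $\dim R(M)_{\rhob_c} \leq 4$.

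For the lower bound and the LCI property, I would use the standard tangent-obstruction presentation $R(M)_{\rhob_c} \simeq W(\FF)[[x_1,\ldots,x_{h^1}]]/J$, where $h^i := \dim_{\FF} H^i(G_{\QQ,Mp}, \ad(\rhob_c))$ and $J$ is generated by at most $h^2$ elements. A short Schur-type argument, exploiting the non-triviality of the extension class and of $\chibar_1(g_0) - \chibar_2(g_0)$, shows that the only $G_{\QQ,Mp}$-equivariant endomorphisms of $\rhob_c$ are scalars, so $h^0 = 1$. The oddness of $\rhob_c$ forces complex conjugation to act on $\ad(\rhob_c)$ with equal-dimensional $\pm 1$-eigenspaces, so $\dim_{\FF} \ad(\rhob_c)^{-} = 2$. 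The global Euler characteristic formula for $G_{\QQ,Mp}$ then yields $h^1 - h^2 = h^0 + \dim \ad^{-} = 3$, giving $\dim R(M)_{\rhob_c} \geq 1 + h^1 - h^2 = 4$. Matching this with the previous upper bound forces $J$ to be generated by exactly $h^2$ elements of maximal height in the regular local ring $W(\FF)[[x_1,\ldots,x_{h^1}]]$; these therefore form a regular sequence and $R(M)_{\rhob_c}$ is LCI, proving Part (1).

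Part (2) then follows quickly. Composing Lemma~\ref{surjlem} with the natural surjection $R(M)_{\rhob_c}/(\alpha) \twoheadrightarrow R(M)_{\rhob_c}/(\alpha,\beta,\delta_k)$ yields a surjection $R^{\ps}(M)_{\rhob_0} \twoheadrightarrow R(M)_{\rhob_c}/(\alpha,\beta,\delta_k)$, so Pan's finiteness theorem \cite[Theorem 5.1.2]{P} shows this quotient is a finite $W(\FF)$-algebra, and in particular has Krull dimension at most $1$. On the other hand, Krull's principal ideal theorem gives Krull dimension at least $4 - 3 = 1$, so equality holds; the three elements $\alpha,\beta,\delta_k$ therefore form a system of parameters in the Cohen--Macaulay ring $R(M)_{\rhob_c}$, hence a regular sequence. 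Since quotienting an LCI ring by a regular sequence preserves the LCI property, Part (2) follows. The main obstacle lies in justifying the identification of $R^{\ps}(M)_{\rhob_0}$ as a two-variable power series ring under the hypothesis $p \nmid \phi(M)$, since this is what yields the crucial Krull-dimension-$3$ upper bound from which the rest of the argument cascades.
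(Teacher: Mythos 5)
Your proposal has a genuine gap: the identification of $R^{\ps}(M)_{\rhob_0}$ with $W(\FF)[[T_1,T_2]]$ is false in general. The completed tensor product of the two character deformation rings is (at best) the quotient of $R^{\ps}(M)_{\rhob_0}$ by its reducibility ideal; it parametrizes only those pseudo-deformations that remain a sum of two characters. The full pseudo-deformation ring also records irreducible lifts of the pseudo-representation, and its tangent space receives contributions from the off-diagonal cohomology groups $H^1(G_{\QQ,Mp},\chibar)$ and $H^1(G_{\QQ,Mp},\chibar^{-1})$ (this is the content of the GMA / reducibility-ideal structure theory of Bellaïche--Chenevier). In the setting at hand these groups are nonzero: the hypothesis gives $\dim H^1_{\{p\}}(G_{\QQ,Mp},\chibar^{-1})=1$, and since $\chibar$ is odd the global Euler characteristic formula forces $\dim H^1(G_{\QQ,Mp},\chibar)\geq 1$ as well. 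So $R^{\ps}(M)_{\rhob_0}$ is genuinely larger than a two-variable power series ring, and no hypothesis of the form $p\nmid\phi(M)$ will collapse it. Indeed, if the pseudo-deformation ring were always of this simple shape, the ordinary fixed-determinant quotient would be trivially finite over $W(\FF)$, and Pan's theorem (a version of Fontaine--Mazur in the residually reducible case) would not be needed. Your entire upper bound, and hence the value of the Krull dimension and the LCI conclusion, depends on this false identification.

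The way to repair the argument is essentially what the paper does: do not try to compute $\dim R^{\ps}(M)_{\rhob_0}$ at all. Instead, form $R:=R(M)_{\rhob_c}/(\alpha,\beta,\delta_k)$, note that the twisted pseudo-representation $(\tr(\rho\otimes\widehat\chibar_2^{-1}),\det(\rho\otimes\widehat\chibar_2^{-1}))$ is ordinary with fixed determinant, and apply Pan's finiteness theorem directly to conclude that the induced map from the pseudo-deformation ring of $(1+\chibar,\chibar)$ to $R$ factors through a finite $W(\FF)$-algebra. Combined with the surjectivity from Lemma~\ref{surjlem} this gives that $R$ itself is finite over $W(\FF)$, hence $\dim R\leq 1$, and then Eisenbud's Theorem~10.2 gives $\dim R(M)_{\rhob_c}\leq\dim R+3\leq 4$. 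Your lower-bound computation (the tangent--obstruction presentation $W(\FF)[[X_1,\ldots,X_n]]/I$ with $I$ generated by at most $n-3$ elements, and $h^0=1$, $\dim\ad^{-}=2$ by oddness) matches the paper's and is fine; so is your regular-sequence argument for Part~(2) once the LCI property of $R(M)_{\rhob_c}$ is in hand. The only real change required is to replace the explicit (and incorrect) computation of the pseudo-deformation ring by the application of Pan's theorem.
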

\begin{proof}
Let $R := R(M)_{\rhob_c}/(\alpha,\beta,\delta_k)$ and $\rho : G_{\QQ,Mp} \to \GL_2(R)$ be the deformation of $\rhob_c$ obtained by composing $\rho_M^{\univ}$ with the natural surjective map $R(M)_{\rhob_c} \to R$.
%Let $\widehat\chibar_2 : G_{\QQ,Mp} \to R^{\times}$ be the Teichmuller lift of $\chibar_2$ and $\widehat\chibar : G_{\QQ,Mp} \to R^{\times}$ be the Teichmuller lift of $\chibar$.
Combining Lemma~\ref{diagonallem} and the hypothesis $p \nmid \phi(M)$, it follows that the pseudo-representation $$(\tr(\rho \otimes \widehat\chibar_2^{-1}), \det(\rho \otimes \widehat\chibar_2^{-1})) : G_{\QQ,Mp} \to R$$ is a deformation of the pseudo-representation $(1+\chibar,\chibar) : G_{\QQ,Mp} \to \FF$ such that
\begin{enumerate}
\item $\tr(\rho \otimes \widehat\chibar_2^{-1}|_{G_{\QQ_p}}) = \eta_1 + \eta_2$, where $\eta_1,\eta_2 : G_{\QQ_p} \to R^{\times}$ are characters and $\eta_2$ is an unramified lift of $1$,
\item $\det(\rho \otimes \widehat\chibar_2^{-1}) = \epsilon\chi_p^{k-1}$, where $\epsilon : G_{\QQ,Mp} \to W(\FF)^{\times}$ is a character of finite order.
\end{enumerate}
Note that the hypothesis $p \nmid \phi(M)$ is needed to conclude that the character $\epsilon$ takes values in $W(\FF)^{\times}$. Otherwise, we can only conclude that it is a finite character taking values in $R^{\times}$.

Let $\mathbb{S}$ be the universal deformation ring of the pseudo-representation $(1 + \chibar,\chibar): G_{\QQ,Mp} \to \FF$ in $\mathcal{C}$ and $\psi : \mathbb{S} \to R$ be the map induced by $(\tr(\rho \otimes \widehat\chibar_2^{-1}), \det(\rho \otimes \widehat\chibar_2^{-1}))$.
Let $S^{\circ}$ be the universal deformation ring of the pseudo-representation $(1+\chibar,\chibar): G_{\QQ,Mp} \to \FF$ which represents the functor from $\mathcal{C}$ to the category of sets sending an object $\mathcal{R}$ of $\mathcal{C}$ to the the set of pseudo-representations $(t,d) : G_{\QQ,Mp} \to \mathcal{R}$ lifting $(1+\chibar,\chibar)$ such that 
%\begin{enumerate}
 $t|_{G_{\QQ_p}}$ is reducible and $d= \epsilon\chi_p^{k-1}$.
%\end{enumerate}
Let $(T,D) : G_{\QQ,Mp} \to S^{\circ}$ be the corresponding universal pseudo-representation.
From the previous paragraph, it follows that the map $\psi$ factors through $S^{\circ}$. Let $\psi' : S^{\circ} \to R$ be the morphism induced by $\psi$.

Note that, $T|_{G_{\QQ_p}} = \Phi_1 + \Phi_2$, where $\Phi_1, \Phi_2 : G_{\QQ_p} \to (S^{\circ})^{\times}$ are characters lifting $\chibar|_{G_{\QQ_p}}$ and $1$, respectively.
As $\Phi_2$ is a character of $G_{\QQ_p}$ lifting $1$, $\Phi_2|_{I_p}$ factors through a quotient of $I_p$ which is isomorphic to $1 +p\ZZ_p$.
The completed group ring of $1+p\ZZ_p$ over $W(\FF)$ is isomorphic to $W(\FF)\llbracket T \rrbracket$.
So the character $\Phi_2|_{I_p}$ induces a map $\kappa : W(\FF)\llbracket T \rrbracket \to S^{\circ}$.
It follows, from \cite[Theorem 5.1.2]{P}, that $S^{\circ}$ is a finite $W(\FF)\llbracket T \rrbracket$-algebra under the map $\kappa$.

Recall that the character $\eta_2 : G_{\QQ_p} \to R^{\times}$ lifts $1$. As $\chibar|_{G_{\QQ_p}} \neq 1$, we get, from \cite[Proposition 1.5.1]{BC}, that $\psi' \circ \Phi_2 = \eta_2$.
Moreover, $\eta_2$ is an unramified character of $G_{\QQ_p}$.
Hence, we obtain that $\psi' \circ \kappa (T)=0$.
From the finiteness of $S^{\circ}$ over $W(\FF)\llbracket T \rrbracket$ under $\kappa$, we conclude that $\psi'$ factors through the quotient $\mathbb{S}' := S^{\circ}/(\kappa(T))$ of $S^{\circ}$ which is a finite $W(\FF)$-algebra.
%Combining previous paragraph with \cite[Theorem 5.1.2]{P}, we get that the map $\psi$ factors through a quotient $S'$ of $S$ which is a finite $W(\FF)$-algebra.
From Lemma~\ref{surjlem}, the map $R(M)^{\ps}_{\rhob_0} \to R$ induced by $(\tr(\rho),\det(\rho))$ is surjective.
Hence, the map $\psi$ is also surjective as it is induced by a twist of $(\tr(\rho),\det(\rho))$.

Note that the map $\psi$ is surjective and it factors through $\mathbb{S}'$ which is finite over $W(\FF)$.
Therefore, we conclude that $R$ is a finite $W(\FF)$-algebra which means its Krull dimension is at most $1$.
So we get, from \cite[Theorem $10.2$]{E}, that $R(M)_{\rhob_c}$ has Krull dimension at most $4$.
But we know, by combining \cite[Theorem $2.4$]{Bo2} and the global Euler characteristic formula, that $$R(M)_{\rhob_c} \simeq \dfrac{W(\FF) \llbracket X_1,\cdots,X_n \rrbracket}{I},$$ where the minimal number of generators of $I$ is at most $n-3$.
If the minimal number of generators of $I$ is $n'$, then \cite[Theorem $10.2$]{E} implies that the Krull dimension of $R(M)_{\rhob_c}$ is at least $1+n-n' \geq 1+n-(n-3)=4$.
Hence, we conclude that $R(M)_{\rhob_c}$ has Krull dimension $4$ and the minimal number of generators of $I$ is $n-3$ which also yields that $R(M)_{\rhob_c}$ is a local complete intersection ring.
Applying \cite[Theorem $10.2$]{E} again, we get that $R$ is a local complete intersection ring of Krull dimension $1$.
%This proves the lemma.
\end{proof}

\subsection{Increasing the ramification}
\label{ramsec}
We will now focus on increasing ramification at some specific types of primes. 
%Before moving on, we establish some more notation.
% Suppose we are in the setup as above.
For a prime $\ell \neq p$, fix a lift $g_{\ell} \in G_{\QQ_{\ell}}$ of $\frob_{\ell}$ and a lift $i_{\ell} \in I_{\ell}$ of the topological generator of the $\ZZ_p$-quotient of the tame inertia group at $\ell$.
%Let $\ell_1,\cdots,\ell_r$ be primes such that $\ell_i \nmid Np$ for all $1 \leq i \leq r$.
%For every $1 \leq i \leq r$, fix a lift $g_{\ell_i} \in G_{\QQ_{\ell_i}}$ of $\frob_{\ell_i}$ and a lift $i_{\ell_i} \in I_{\ell_i}$ of the topological generator of the $\ZZ_p$-quotient of the tame inertia group at $\ell_i$.
%Let $M=N\prod_{i=1}^{r}\ell_i$
%For an integer $M$ divisible by $N$, denote the maximal ideal of $R(M)_{\rhob_c}$ by $\mathfrak{m}_M$.

\begin{lem}
\label{ramlem}
Let $c \in H^1(G_{\QQ,Np},\chibar^{-1})$ be a non-zero element.
Let $\ell_1,\cdots,\ell_r$ be primes such that $\ell_i \nmid Np$, $p \nmid \ell_i-1$ and $\chibar|_{G_{\QQ_{\ell_i}}} = \omega_p|_{G_{\QQ_{\ell_i}}}$ for all $1 \leq i \leq r$. Let $M=N\prod_{i=1}^{r}\ell_i$.
Then for every $1 \leq i \leq r$, the universal deformation $\rho_{M}^{\univ} : G_{\QQ,Mp} \to \GL_2(R(M)_{\rhob_c})$ of $\rhob_c$ is tamely ramified at $\ell_i$.
Moreover, there exists a matrix $P_i \in \GL_2(R(M)_{\rhob_c})$ such that $P_i \pmod{\mathfrak{m}_M} = \begin{pmatrix} 1 & * \\ 0 & 1\end{pmatrix}$, $P_i(\rho_{M}^{\univ}(g_{\ell_i}))P_i^{-1} = \begin{pmatrix} \psi_{2,i} & 0 \\ 0 & \psi_{1,i} \end{pmatrix}$ and 
%$P_i \pmod{\mathfrak{m}} = \begin{pmatrix} 1 & *\\ 0 & 1\end{pmatrix}$ and
\begin{enumerate}
 \item $P_i(\rho_{M}^{\univ}(i_{\ell_i}))P_i^{-1} = \begin{pmatrix} 1 & 0 \\ w_i & 1 \end{pmatrix},$
 if  $p \nmid \ell_i+1$,
\item $P_i(\rho_{M}^{\univ}(i_{\ell_i}))P_i^{-1} = \begin{pmatrix} \sqrt{1+u_iv_i} & u_i \\ v_i & \sqrt{1+u_iv_i} \end{pmatrix},$  if  $p \mid \ell_i+1.$
\end{enumerate}
\end{lem}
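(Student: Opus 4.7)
The plan is threefold: first show $\rho_M^{\univ}|_{G_{\QQ_{\ell_i}}}$ is tamely ramified, then diagonalize the image of Frobenius via a Hensel-type lift, and finally read off the shape of $\tau := P_i\rho_M^{\univ}(i_{\ell_i})P_i^{-1}$ from the tame relation $g_{\ell_i}i_{\ell_i}g_{\ell_i}^{-1} = i_{\ell_i}^{\ell_i}$.

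For tameness, since $\ell_i \nmid Np$, $\rhob_c$ is unramified at $\ell_i$, so $\rho_M^{\univ}$ sends wild inertia at $\ell_i$ into the kernel of reduction $I + \mathfrak{m}_M M_2(R(M)_{\rhob_c})$, which is pro-$p$. Wild inertia being pro-$\ell_i$ with $\ell_i \neq p$, its image is trivial, so $\rho_M^{\univ}|_{G_{\QQ_{\ell_i}}}$ factors through the tame quotient and is determined by the images of $g_{\ell_i}$ and $i_{\ell_i}$. For the diagonalization, since $\chibar|_{G_{\QQ_{\ell_i}}} = \omega_p|_{G_{\QQ_{\ell_i}}}$ and $p \nmid \ell_i - 1$, one has $\chibar_1(g_{\ell_i}) \neq \chibar_2(g_{\ell_i})$; the upper-triangular matrix $\rhob_c(g_{\ell_i})$ is therefore diagonalized over $\FF$ by an upper unipotent $U_0 = \begin{pmatrix} 1 & t_0 \\ 0 & 1 \end{pmatrix}$, and a Hensel-type argument (as in the proof of \cite[Lemma 3.1]{D2} and \cite[Lemma 2.4.5]{Bel}) lifts $U_0$ to $P_i \in \GL_2(R(M)_{\rhob_c})$ with $P_i \equiv U_0 \pmod{\mathfrak{m}_M}$ and $P_i\rho_M^{\univ}(g_{\ell_i})P_i^{-1} = \begin{pmatrix} \psi_{2,i} & 0 \\ 0 & \psi_{1,i}\end{pmatrix}$, where $\psi_{j,i}$ is the unique eigenvalue of $\rho_M^{\univ}(g_{\ell_i})$ lifting $\chibar_j(g_{\ell_i})$.

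Setting $\phi = \begin{pmatrix} \psi_{2,i} & 0 \\ 0 & \psi_{1,i}\end{pmatrix}$, $\lambda = \psi_{2,i}\psi_{1,i}^{-1}$ and writing $\tau = \begin{pmatrix} a & b \\ c & d \end{pmatrix}$ (with $\tau \equiv I \pmod{\mathfrak{m}_M}$), the tame relation becomes $\phi\tau\phi^{-1} = \tau^{\ell_i}$. Taking determinants gives $(\det\tau)^{\ell_i-1} = 1$; combined with $\det\tau \equiv 1$ and $p \nmid \ell_i - 1$, this yields $\det\tau = 1$. By Cayley--Hamilton, $\tau^n = \alpha_n \tau - \alpha_{n-1}I$ where $T = \tr\tau$, $\alpha_0 = 0$, $\alpha_1 = 1$, $\alpha_{n+1} = T\alpha_n - \alpha_{n-1}$; since $T \equiv 2 \pmod{\mathfrak{m}_M}$, one has $\alpha_n \equiv n \pmod{\mathfrak{m}_M}$. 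Comparing entries of $\phi\tau\phi^{-1}$ and $\tau^{\ell_i}$ yields $(\alpha_{\ell_i} - 1)a = \alpha_{\ell_i - 1} = (\alpha_{\ell_i} - 1)d$, $(\alpha_{\ell_i} - \lambda)b = 0$, and $(\alpha_{\ell_i} - \lambda^{-1})c = 0$. Since $\alpha_{\ell_i} - 1 \equiv \ell_i - 1$ is a unit, $a = d =: s$ and $s^2 - bc = 1$.

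The last step is the case analysis. In Case (1), $\alpha_{\ell_i} - \lambda \equiv \ell_i - \ell_i^{-1} = \ell_i^{-1}(\ell_i^2 - 1)$ is also a unit because $p \nmid \ell_i^2 - 1$, forcing $b = 0$; then $s^2 = 1$ together with $s \equiv 1 \pmod{\mathfrak{m}_M}$ and $p \neq 2$ forces $s = 1$, giving $\tau = \begin{pmatrix} 1 & 0 \\ w_i & 1\end{pmatrix}$ with $w_i := c$. In Case (2), no such forcing occurs, and $s$ is the unique square root of $1 + bc$ reducing to $1$ (well defined in $R(M)_{\rhob_c}$ by Hensel, using $p \neq 2$); setting $u_i := b$ and $v_i := c$ yields the stated form. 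The only subtle step is the Hensel diagonalization of $\rho_M^{\univ}(g_{\ell_i})$ in the second paragraph, but this is standard in residually distinguishable settings; the remaining arguments are direct matrix computations using the tame relation.
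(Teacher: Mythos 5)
Your proof is correct and follows essentially the same route as the paper: diagonalize $\rho_M^{\univ}(g_{\ell_i})$ by an upper-unipotent conjugation lifted via Hensel (as in \cite[Lemma 3.1]{D2}), then read off the shape of $\rho_M^{\univ}(i_{\ell_i})$ from the tame relation. Where the paper cites \cite[Lemma 4.9]{Bo1} and \cite[Lemma 6, 7]{Bos} for the tameness statement and the explicit matrix form, you have correctly unpacked those citations: the pro-$p$ argument for tameness, $\det\tau=1$ from $p\nmid\ell_i-1$, the Cayley--Hamilton recursion $\alpha_n\equiv n$, and the case split on whether $\alpha_{\ell_i}-\lambda\equiv\ell_i-\ell_i^{-1}$ is a unit (i.e.\ whether $p\mid\ell_i+1$), together with the Hensel square root in the split case, is exactly the content of those lemmas.
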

\begin{proof}
Note that for every $1 \leq i \leq r$, there exists a $\tilde Q_i = \begin{pmatrix} 1 & b_i \\ 0 & 1 \end{pmatrix} \in \GL_2(\FF)$ such that $\tilde Q_i \rhob_c(g_{\ell_i}) \tilde Q_i^{-1}$ is diagonal with \emph{distinct} entries on diagonal.
Let $\hat b_i \in R(M)_{\rhob_c}$ be the Teichmuller lift of $b_i$ and let $Q_i = \begin{pmatrix} 1 & \hat b_i \\ 0 & 1 \end{pmatrix} \in \GL_2(R(M)_{\rhob_c})$.
By \cite[Lemma $3.1.1$]{D2}, for every $1 \leq i \leq r$, there exists a $P'_i \in \GL_2(R(M)_{\rhob_c}$ such that $P'_i \equiv Id \pmod{\mathfrak{m}_M}$ and $$P'_i(Q_i\rho_M^{\univ}(g_{\ell_i}){Q_i}^{-1}){P'_i}^{-1} = \begin{pmatrix} \psi_{2,i} & 0 \\ 0 & \psi_{1,i} \end{pmatrix}.$$ 
%where $\psi_{2,i},\psi_{1,i} \in R(M)_{\rhob_c}$ are such that $\psi_{2,i} \pmod{\mathfrak{m}_M}=\chibar_2(g_{\ell_i})$ and $\psi_{1,i} \pmod{\mathfrak{m}_M} =\chibar_1(g_{\ell_i})$.
The lemma now follows directly from \cite[Lemma $4.9$]{Bo1} (see also \cite[Lemma 6, 7]{Bos} as well).
\end{proof}

We now focus on determining structure of $R(N\prod_{i=1}^{r}\ell_i)_{\rhob_c}$ in terms of structure of $R(N)_{\rhob_c}$ (in the spirit of \cite[Theorem $4.7$]{Bo1}). We begin with proving some results which will play a key role in determining this structure.

\begin{lem}
\label{surjlemma}
Suppose $\dim(H^1_{\{p\}}(G_{\QQ,Np},\chibar^{-1})) =1$ and $p \nmid \phi(N)$.
Let $\ell_1,\cdots,\ell_r$ be primes such that $\ell_i \nmid Np$, $p \nmid \ell_i-1$ and $\chibar|_{G_{\QQ_{\ell_i}}} = \omega_p|_{G_{\QQ_{\ell_i}}}$ for all $1 \leq i \leq r$. Let $M=N\prod_{i=1}^{r}\ell_i$.
Let $c \in H^1_{\{p\}}(G_{\QQ,Np},\chibar^{-1})$ be a non-zero element, $k$ be an integer such that $k \equiv k_0 \pmod{p-1}$ and $\alpha, \beta, \delta_k \in R(M)_{\rhob_c}$ be the elements found in Lemma~\ref{diagonallem}.
Let $S$ be the subset of $\{\ell_{1},\cdots,\ell_{r}\}$ consisting of all primes which are $-1 \pmod{p}$.
\begin{enumerate}
\item If $S=\emptyset$, then the morphism $$\phi' : R(M)^{\ps}_{\rhob_0} \to R(M)_{\rhob_c}/(\alpha,\beta,\delta_k)$$ induced by $(\tr(\rho'),\det(\rho'))$, where $\rho' := \rho_M^{\univ}\pmod{(\alpha,\beta,\delta_k)}$, is surjective and $R(M)_{\rhob_c}/(\alpha,\beta,\delta_k)$ is a finite $W(\FF)$-algebra.
\item Suppose $\emptyset \neq S = \{\ell_{i_1},\cdots,\ell_{i_s}\}$ and let $u_{i_1},\cdots,u_{i_s} \in R(M)_{\rhob_c}$ be the elements found in Lemma ~\ref{ramlem}.
Then the morphism $$\phi' : R(M)^{\ps}_{\rhob_0} \to R(M)_{\rhob_c}/(\alpha,\beta,\delta_k,u_{i_1},\cdots,u_{i_s})$$ induced by $(\tr(\rho'),\det(\rho'))$, where $\rho' := \rho_M^{\univ}\pmod{(\alpha,\beta,\delta_k,u_{i_1},\cdots,u_{i_s})}$, is surjective and $R(M)_{\rhob_c}/(\alpha,\beta,\delta_k,u_{i_1},\cdots,u_{i_s})$ is a finite $W(\FF)$-algebra.
\end{enumerate}
\end{lem}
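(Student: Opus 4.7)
The proof mirrors the strategy of Lemmas~\ref{surjlem} and \ref{dimlem}, adapted to handle the additional ramification coming from the primes $\ell_i$. The crucial preliminary observation is a local cohomology vanishing: for any prime $\ell \nmid Np$ with $\chibar|_{G_{\QQ_\ell}} = \omega_p|_{G_{\QQ_\ell}}$ and $p \nmid \ell^2-1$, one has $H^0(G_{\QQ_\ell},\chibar^{-1}) = 0$ (since $\omega_p \neq 1$ locally), and $H^2(G_{\QQ_\ell},\chibar^{-1}) \cong H^0(G_{\QQ_\ell},\chibar\omega_p)^{\vee} = 0$ (since $\omega_p^2 \neq 1$ locally), so the local Euler--Poincaré formula forces $H^1(G_{\QQ_\ell},\chibar^{-1}) = 0$. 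In particular, inflation--restriction shows that no new cohomology classes arise from primes $\ell_i \notin S$.

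\textbf{Part (1).} When $S = \emptyset$, the observation above yields $\dim H^1_{\{p\}}(G_{\QQ,Mp},\chibar^{-1}) = \dim H^1_{\{p\}}(G_{\QQ,Np},\chibar^{-1}) = 1$, so Lemma~\ref{surjlem} applies directly at level $M$ to give surjectivity of $R(M)^{\ps}_{\rhob_0} \to R(M)_{\rhob_c}/(\alpha)$, which descends to the further quotient by $(\beta,\delta_k)$. Finiteness of the quotient then follows by invoking Pan's theorem (\cite[Theorem 5.1.2]{P}) applied to the twisted pseudo-deformation $(\tr(\rho'\otimes\widehat\chibar_2^{-1}),\det(\rho'\otimes\widehat\chibar_2^{-1}))$, exactly as in the proof of Lemma~\ref{dimlem}.

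\textbf{Part (2).} Now $\dim H^1_{\{p\}}(G_{\QQ,Mp},\chibar^{-1})$ may exceed $1$ due to new classes ramified at primes in $S$; the role of the extra generators $u_{i_j}$ is to kill them. Following the proof of Lemma~\ref{surjlem}, given $\phi:R(M)_{\rhob_c}/(\alpha,\beta,\delta_k,u_{i_1},\ldots,u_{i_s}) \to \FF[\epsilon]/(\epsilon^2)$ with $\rho := \phi\circ\rho'$ satisfying $(\tr(\rho),\det(\rho)) = (\tr(\rhob_0),\det(\rhob_0))$, Lemma~\ref{redlem} puts $\rho$ in the form $\begin{pmatrix}\chibar_2 & \chibar_1(f+\epsilon f') \\ 0 & \chibar_1 \end{pmatrix}$ after conjugation by some $P \equiv \mathrm{Id} \pmod{(\epsilon)}$, for a cocycle $f' \in Z^1(G_{\QQ,Mp},\chibar^{-1})$ with class $c'$. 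The argument of Lemma~\ref{surjlem} using the vanishing of $\alpha$ shows $c'|_{G_{\QQ_p}} = 0$. The key new step is to deduce that $c'$ is also unramified at each $\ell_{i_j} \in S$: since $u_{i_j} = 0$ in the quotient, $P_{i_j}\rho'(i_{\ell_{i_j}})P_{i_j}^{-1}$ reduces to a lower triangular matrix with identity diagonal, while $P\rho(i_{\ell_{i_j}})P^{-1}$ is upper triangular with diagonal entries $\chibar_2(i_{\ell_{i_j}}) = \chibar_1(i_{\ell_{i_j}}) = 1$ and upper-right entry $f(i_{\ell_{i_j}}) + \epsilon f'(i_{\ell_{i_j}}) = \epsilon f'(i_{\ell_{i_j}})$ (using that $\rhob_c$ is unramified at $\ell_{i_j}$). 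Comparing these two forms via the intertwining matrix $Q := PP_{i_j}^{-1}$, whose $(2,2)$-entry is a unit because $Q \equiv \begin{pmatrix} 1 & * \\ 0 & 1 \end{pmatrix} \pmod{\mathfrak{m}_M}$, and exploiting $(\epsilon)^2 = 0$, a direct $2\times 2$ computation forces $f'(i_{\ell_{i_j}}) = 0$. Combined with the vanishing of $H^1(G_{\QQ_{\ell_i}},\chibar^{-1})$ for $\ell_i \notin S$, this shows $c' \in H^1_{\{p\}}(G_{\QQ,Np},\chibar^{-1})$. Since the latter is one-dimensional, $c' = x_0 c$ for some $x_0 \in \FF$, and a diagonal conjugation yields $\rho \simeq \rhob_c$, proving surjectivity. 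Finiteness follows from Pan's theorem as in Part (1).

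\textbf{Main obstacle.} The subtlest point is the extraction of $f'(i_{\ell_{i_j}}) = 0$ from the $u_{i_j}$-vanishing: one must carefully compare the globally-triangularizing matrix $P$ with the locally-triangularizing matrix $P_{i_j}$, and exploit that the $(2,1)$-entry of $Q$ lies in the maximal ideal $(\epsilon)$ of $\FF[\epsilon]/(\epsilon^2)$ (and hence squares to zero) while the $(2,2)$-entry of $Q$ is a unit. The remaining ingredients (applying Lemma~\ref{redlem}, the Selmer-group computation, and Pan's finiteness) are entirely parallel to the proofs of Lemmas~\ref{surjlem} and \ref{dimlem}.
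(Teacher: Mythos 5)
Your proof is correct and follows the same overall strategy as the paper (reduce to a tangent-space computation, use Lemma~\ref{redlem} and the choice of $\alpha$, kill the ramification contribution from $S$, then apply the one-dimensionality of the Selmer group, and invoke Pan's theorem for finiteness). Part~(1) is essentially identical to the paper's argument; your use of the local Euler--Poincar\'{e} formula plus inflation--restriction is an unwrapping of the Greenberg--Wiles count that the paper cites directly.

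For Part~(2) you take a genuinely different tack at the key step. The paper's proof extracts a \emph{trace} relation: picking an $h$ with $\rhob_c(h)=\begin{pmatrix}1&x\\0&1\end{pmatrix}$, $x\neq 0$, it computes
$f\bigl(\tr(\rho'(h\,i_{\ell_j})) - \tr(\rho'(h))\bigr) = f(w_j\bar v_j)$
for a unit $w_j$, deduces $\bar v_j\in\ker f$, and hence concludes that $\rho$ factors through the smaller group $G_{\QQ,M'p}$ with $M'=M/\prod_{j\le s}\ell_j$, reducing to the $S=\emptyset$ case. You instead compare the $(1,2)$-entry of the matrix identity $Q\,\rho(i_{\ell_{i_j}})=\begin{pmatrix}1&0\\f(\bar v_{i_j})&1\end{pmatrix}Q$, where $Q=f(\bar P_{i_j})$, to conclude directly that $f'(i_{\ell_{i_j}})=0$; combined with the local $H^1$-vanishing for $\ell_i\notin S$ and the vanishing at $p$ coming from $\alpha$, this places $c'$ in $H^1_{\{p\}}(G_{\QQ,Np},\chibar^{-1})$. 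Both routes establish that $\rho$ is unramified at the primes in $S$, but the paper's trace manipulation is coordinate-free (it never requires a choice of triangularization for $\rho$), whereas your entry-by-entry comparison is more explicit and makes transparent exactly where $\epsilon^2=0$ and the unit $(1,1)$-entry of $Q$ are used. Your computation is sound: the $(1,2)$-comparison gives $q_{11}\epsilon f'(i_{\ell_{i_j}})=0$ with $q_{11}$ a unit, so $f'(i_{\ell_{i_j}})=0$ as claimed.
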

\begin{proof}
First suppose $S=\emptyset$ which means $p \nmid \ell_i+1$ for all $1 \leq i \leq r$.
Since $\chibar|_{G_{\QQ_{\ell_i}}} = \omega_p|_{G_{\QQ_{\ell_i}}}$ for all $1 \leq i \leq r$, this implies that $\chibar^{-1}|_{G_{\QQ_{\ell_i}}} \neq \omega_p|_{G_{\QQ_{\ell_i}}}$ for all $1 \leq i \leq r$.
Hence, by Greenberg--Wiles formula (\cite[Theorem $2$]{Wa}), it follows that $$0 \leq \dim(H^1_{\{p\}}(G_{\QQ,Mp},\chibar^{-1})) - \dim(H^1_{\{p\}}(G_{\QQ,Np},\chibar^{-1})) \leq \sum_{i=1}^{r}\dim(H^0(G_{\QQ_{\ell_i}},\omega_p\chibar|_{G_{\QQ_{\ell_i}}}))=0,$$ which means $\dim(H^1_{\{p\}}(G_{\QQ,Mp},\chibar^{-1})) = \dim(H^1_{\{p\}}(G_{\QQ,Np},\chibar^{-1}))  =1$.
 So, the surjectivity of $\phi'$ in this case follows directly from Lemma~\ref{surjlem}.
Since $p \nmid \phi(M)$, the finiteness of $R(M)_{\rhob_c}/(\alpha,\beta,\delta_k)$ follows from Lemma~\ref{dimlem}.
This completes the proof of the Lemma in the case $S=\emptyset$.

Now suppose $S \neq \emptyset$. 
Without loss of generality, assume $S=\{\ell_1,\cdots,\ell_s\}$.
Let $J = (\alpha,\beta,\delta_k,u_1,\cdots,u_s)$.
Let $f : R(M)_{\rhob_c}/J \to \FF[\epsilon]/(\epsilon^2)$ be a map and let $\rho := f \circ \rho'$.
To prove the surjectivity of $\phi'$ in this case, it suffices to prove that if $\tr(\rho)=\tr(\rhob_0)$ and $\det(\rho) = \det(\rhob_0)$, then $\rho \simeq \rhob_c \otimes_{\FF} \FF[\epsilon]/(\epsilon^2)$ (see the proof of Lemma~\ref{surjlem} for more details).

Suppose $\tr(\rho)=\tr(\rhob_0)$ and $\det(\rho) = \det(\rhob_0)$.
Since $\tr(\rho)=\tr(\rhob_0)$, it follows that for all $g \in G_{\QQ,Mp}$, $\tr(\rho'(gi_{\ell_j})) - \tr(\rho'(g)) \in \ker(f)$ for all $1 \leq j \leq s$.
%For every $1 \leq i \leq s$, let $\bar{w_i} := w_i \pmod{J}$ and $\bar{P_i} := P_i \pmod{J}$.
For every $1 \leq j \leq s$, let $\bar{v_j} := v_j \pmod{J}$ and $\bar{P_j} := P_j \pmod{J}$. Here $v_j$'s are the elements of $R(M)_{\rhob_c}$ and $P_j$'s are the matrices found in Lemma~\ref{ramlem}. 
This means $$\bar{P_j}\rho'(i_{\ell_j})\bar{P_j}^{-1}=\begin{pmatrix} 1 & 0 \\ \bar{v_j} & 1\end{pmatrix}, \text{ for every } 1 \leq j \leq s.$$

Let $h \in G_{\QQ,Mp}$ such that $\rhob_c(h) = \begin{pmatrix} 1 & x \\ 0 & 1 \end{pmatrix}$ for some $x \in \FF^{\times}$.
Since $P_j \pmod{\mathfrak{m}_M} = \begin{pmatrix} 1 & * \\0 & 1\end{pmatrix}$ for all $1 \leq j \leq s$, it follows that $f(\bar{P_j})\rho(h)f(\bar{P_j})^{-1} \pmod{\epsilon} = \begin{pmatrix} 1 & x \\ 0 & 1 \end{pmatrix}$ for every $1 \leq j \leq s$.
Thus we get, from Lemma~\ref{ramlem}, that for every $1 \leq j \leq s$, $$f(\tr(\rho'(h i_{\ell_j}) ) - \tr(\rho'(h))) = f(\tr(\bar{P_j}\rho'(h i_{\ell_j})\bar{P_j}^{-1} ) - \tr(\bar{P_j}\rho'(h)\bar{P_j}^{-1})) = f(w_j \bar{v_j})$$ for some $w_j \in (R(M)_{\rhob_c}/J)^{\times}$.

Therefore, for all $1 \leq j \leq s$, $\bar{v_j} \in \ker(f)$ and hence, $\rho$ is unramified at $\ell_j$ for all $1 \leq j \leq s$. Thus, $\rho$ factors through $G_{\QQ,M'p}$, where $M'=\frac{M}{\prod_{i=1}^{s}\ell_i}$.
So if $\ell \mid \frac{M'}{N}$, then $p \nmid \ell+1$.
From the proof of the first part of the lemma, we get that $\dim(H^1_{\{p\}}(G_{\QQ,M'p},\chibar^{-1}))  =1$.
Now $\rho$ is a $\FF[\epsilon]/(\epsilon^2)$-valued representation of $G_{\QQ,M'p}$ such that $\tr(\rho)=\tr(\rhob_0)$ and $\det(\rho) = \det(\rhob_0)$.
Therefore, it follows, from the proof of Lemma~\ref{surjlem}, that $\rho \simeq \rhob_c \otimes_{\FF} \FF[\epsilon]/(\epsilon^2)$ and hence, $\phi'$ is surjective.

Following the proof of Lemma~\ref{dimlem}, we conclude that $\phi'$ factors through a quotient of $R(M)^{\ps}_{\rhob_0}$ which is a finite $W(\FF)$-algebra. So surjectivity of $\phi'$ implies that $R(M)_{\rhob_c}/J$ is a finite $W(\FF)$-algebra.
This finishes the proof of the lemma.
\end{proof}

\begin{prop}
\label{oneprimelem}
Suppose $\dim(H^1_{\{p\}}(G_{\QQ,Np},\chibar^{-1})) =1$ and let $c \in H^1_{\{p\}}(G_{\QQ,Np},\chibar^{-1})$ be a non-zero element.
Let $\ell$ be a prime such that $\chibar|_{G_{\QQ_{\ell}}} = \omega_p|_{G_{\QQ_{\ell}}}$ and $p \nmid \ell - 1$.
Then the ring $R(N\ell)_{\rhob_c}$ has Krull dimension $4$.
\end{prop}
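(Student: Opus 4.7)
The plan is to prove $\dim R(N\ell)_{\rhob_c} = 4$ by establishing matching lower and upper bounds. For the lower bound, I would repeat the Euler characteristic argument from the end of the proof of Lemma~\ref{dimlem}: since $\rhob_c$ is a reducible, non-split, odd representation with only scalar endomorphisms, $\dim H^0(G_{\QQ, N\ell p}, \ad(\rhob_c)) = 1$ and $\dim \ad(\rhob_c)^{c_\infty = 1} = 2$, so the global Euler characteristic formula yields $\dim H^1 - \dim H^2 = 3$. Combining \cite[Theorem $2.4$]{Bo2} with \cite[Theorem $10.2$]{E}, exactly as in Lemma~\ref{dimlem}, then gives $\dim R(N\ell)_{\rhob_c} \geq 4$.

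For the upper bound, I would split on whether $p \mid \ell+1$. In the easy case $p \nmid \ell+1$, the hypothesis $\chibar|_{G_{\QQ_\ell}} = \omega_p|_{G_{\QQ_\ell}}$ together with $p \nmid \ell^2 - 1$ yields $H^0(G_{\QQ_\ell}, \omega_p\chibar) = 0$. A Greenberg--Wiles computation (exactly as in the first part of the proof of Lemma~\ref{surjlemma}) then forces $\dim H^1_{\{p\}}(G_{\QQ, N\ell p}, \chibar^{-1}) = \dim H^1_{\{p\}}(G_{\QQ, Np}, \chibar^{-1}) = 1$. Since $p \nmid \phi(N\ell) = \phi(N)(\ell - 1)$, Lemma~\ref{dimlem} applied with $M = N\ell$ immediately yields $\dim R(N\ell)_{\rhob_c} = 4$.

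The delicate case is $p \mid \ell+1$. Here Lemma~\ref{surjlemma} Part 2 produces an ideal $J = (\alpha, \beta, \delta_k, u) \subset R(N\ell)_{\rhob_c}$ with four generators whose quotient is a finite $W(\FF)$-algebra, so Krull's height theorem naively gives only $\dim R(N\ell)_{\rhob_c} \leq 5$. Sharpening this bound by one is the main obstacle. I would exploit the tame inertia relation $g_\ell i_\ell g_\ell^{-1} = i_\ell^{\ell}$ together with the explicit conjugate forms of $\rho_{N\ell}^{\univ}(g_\ell)$ and $\rho_{N\ell}^{\univ}(i_\ell)$ from Lemma~\ref{ramlem}(2); via Cayley--Hamilton this yields an identity of the shape $u \cdot (\psi_2\psi_1^{-1} - A_\ell(t)) = 0$ in $R(N\ell)_{\rhob_c}$, where $t = \sqrt{1 + uv}$ and $A_\ell$ is a Chebyshev-type polynomial with $A_\ell(1) = \ell$. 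Combined with the symmetric relation for $v$ and the constant-determinant condition encoded by $\delta_k$, I expect to show that some power of $u$ lies in $(\alpha, \beta, \delta_k)$, so that the height of $J$ is at most $3$ and hence $\dim R(N\ell)_{\rhob_c} \leq 4$.
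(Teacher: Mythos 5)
Your lower bound and your treatment of the case $p\nmid\ell+1$ match the paper exactly, so the only issue is the case $p\mid\ell+1$, and there the proposal has a genuine gap.

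You correctly identify the tame inertia relation as the key structural input: conjugating as in Lemma~\ref{ramlem}(2), the relation $g_\ell i_\ell g_\ell^{-1}=i_\ell^{\ell}$ produces (in the paper's notation) $u(h_\ell-\phi)=0$ and $v(h_\ell-\phi^{-1})=0$. But these are zero-divisor relations, not nilpotence statements, and nothing in them forces $u\in\sqrt{(\alpha,\beta,\delta_k)}$. You say you ``expect to show that some power of $u$ lies in $(\alpha,\beta,\delta_k)$'' so that the ideal $J$ has height $\le 3$; this is precisely the unproved step, and Cayley--Hamilton plus the determinant condition will not deliver it, because the obstruction is arithmetic rather than algebraic. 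Concretely, for a minimal prime $P$ of $R(N\ell)_{\rhob_c}$ with $u\notin P$ and $v\in P$, there is no purely ring-theoretic reason why $(\alpha,\beta,\delta_k)\subset P$ should fail; ruling out this configuration from contributing an extra dimension is exactly where the hard arithmetic enters.

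The paper instead bounds $\dim R(N\ell)_{\rhob_c}/P$ for each minimal prime $P$ separately, splitting on whether $u\in P$. When $u\in P$, Lemma~\ref{surjlemma} (which itself rests on Pan's finiteness theorem for pseudo-deformation rings) gives that $R(N\ell)_{\rhob_c}/(\alpha,\beta,\delta_k,P)$ is finite over $W(\FF)$, and Krull's theorem gives $\dim R(N\ell)_{\rhob_c}/P\le 4$. When $u\notin P$, one has $h_\ell-\phi\in P$, and then either $v\in P$ --- in which case the paper passes to a characteristic-zero point, applies the Skinner--Wiles modularity theorem \cite[Theorem~A]{SW2}, and uses \cite[Lemma~5.1]{D2} to show that a $p$-ordinary eigenform of tame level prime to $\ell$ cannot have $\rho_f|_{G_{\QQ_\ell}}\simeq\chi\oplus\chi\chi_p$ with $\chi$ unramified, forcing $R(N\ell)_{\rhob_c}/(\alpha,\beta,\delta_k,u,P)$ to be Artinian --- or $v\notin P$, in which case $h_\ell-\phi^{-1}\in P$ too, hence $\phi+1\in P$ and $\ell+1\in(u,P)$, and finiteness over $W(\FF)/(\ell+1)$ again kills a dimension. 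Your proposal contains no analogue of the Skinner--Wiles input or the local constraint from \cite[Lemma~5.1]{D2}, and without them the height bound on $J$ remains at $5$, leaving the desired upper bound $\dim R(N\ell)_{\rhob_c}\le 4$ unproved.
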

\begin{proof}
Suppose $p \nmid \ell^2-1$. Since $\chibar|_{G_{\QQ_{\ell}}} = \omega_p|_{G_{\QQ_{\ell}}}$, this implies that $\chibar^{-1}|_{G_{\QQ_{\ell}}} \neq \omega_p|_{G_{\QQ_{\ell}}}$.
Hence, by Greenberg--Wiles formula (\cite[Theorem $2$]{Wa}), it follows that $$0 \leq \dim(H^1_{\{p\}}(G_{\QQ,N\ell p},\chibar^{-1})) - \dim(H^1_{\{p\}}(G_{\QQ,Np},\chibar^{-1})) \leq \dim(H^0(G_{\QQ_{\ell}},\omega_p\chibar|_{G_{\QQ_{\ell}}}))=0,$$ which means $\dim(H^1_{\{p\}}(G_{\QQ,N\ell p},\chibar^{-1})) = \dim(H^1_{\{p\}}(G_{\QQ,Np},\chibar^{-1}))  =1$.
 So, the proposition in this case follows directly from Lemma~\ref{dimlem}.

%In the case of $p \mid \ell+1$, we prove the lemma by following the proof of Lemma REF. 

Now suppose $p \mid \ell + 1$. 
Note that, by combining \cite[Theorem $2.4$]{Bo2}, the global Euler characteristic formula and \cite[Theorem $10.2$]{E}, we get that the Krull dimension of $R(N\ell)_{\rhob_c}$ is at least $4$. Hence, it suffices to prove that if $P$ is a minimal prime ideal of $R(N\ell)_{\rhob_c}$, then the Krull dimension of $R(N\ell)_{\rhob_c}/P$ is at most $4$.

By Lemma~\ref{ramlem}, there exists a $\mathcal{P} \in \GL_2(R(N\ell)_{\rhob_c})$ such that $\mathcal{P} \equiv \begin{pmatrix} 1 & * \\ 0 & 1\end{pmatrix} \pmod{\mathfrak{m}_{N\ell}}$, $$\mathcal{P}\rho_{N\ell}^{\univ}(g_\ell)\mathcal{P}^{-1} = \begin{pmatrix} \psi_{2,\ell} & 0 \\ 0  & \psi_{1,\ell} \end{pmatrix} \text{ and } \mathcal{P}\rho_{N\ell}^{\univ}(i_\ell)\mathcal{P}^{-1} = \begin{pmatrix}  \sqrt{1+uv} & u \\ v & \sqrt{1+uv} \end{pmatrix}.$$
Let $\rho := \mathcal{P}\rho_{N\ell}^{\univ}\mathcal{P}^{-1}$.

Let $\psi_{\ell} = \psi_{2,\ell}\psi_{1,\ell}^{-1}$ and $f_{\ell}, h_{\ell} \in R(N\ell)_{\rhob_c}$ be such that $\rho(i_{\ell})^{\ell} = \begin{pmatrix} f_{\ell}  & uh_{\ell} \\ vh_{\ell} &  f_{\ell} \end{pmatrix}$.
Therefore, the relation $\rho(g_{\ell}i_{\ell}g_{\ell}^{-1}) = \rho(i_{\ell})^{\ell}$ gives us $u(h_{\ell}-\psi_{\ell}) =0$ and $v(h_{\ell}-\psi_{\ell}^{-1})=0$ (see proof of \cite[Lemma 4.9]{Bo1} for more details).

Let $k > 2$ be an integer such that $k \equiv k_0 \pmod{p-1}$ and $\alpha, \beta, \delta_k \in R(N\ell)_{\rhob_c}$ be the elements found in Lemma~\ref{diagonallem}.

Let $P$ be a minimal prime of $R(N\ell)_{\rhob_c}$. So $P$ contains either $u$ or $h_{\ell}-\psi_{\ell}$.
If $P$ contains $u$, then Lemma~\ref{surjlemma} implies that $R(N\ell)_{\rhob_c}/(\alpha,\beta,\delta_k,P)$ is finite over $W(\FF)$.
Thus, its Krull dimension is at most $1$. Hence, \cite[Theorem $10.2$]{E} implies that the Krull dimension of $R(N\ell)_{\rhob_c}/P$ is at most $4$.

Suppose $P$ does not contain $u$. 
So $h_{\ell}-\psi_{\ell} \in P$. 
Now Lemma~\ref{surjlemma} implies that $R:=R(N\ell)_{\rhob_c}/(\alpha,\beta,\delta_k,u,P)$ is finite over $W(\FF)$. So its Krull dimension is at most $1$.
Note that $P$ contains either $v$ or $h_{\ell} - \psi_{\ell}^{-1}$. 

Suppose $v \in P$ and $R$ has Krull dimension $1$. Let $Q$ be a minimal prime of $R$. Then $R/Q$ is an integral domain which is a finite algebra over $W(\FF)$.
Hence, it can be identified with a subring of $\overline{\QQ}_p$.
Fix an inclusion $R/Q \to \overline{\QQ}_p$ and let $\tau' := \rho_{N\ell}^{\univ} \pmod{(\alpha,\beta,\delta_k,u,P)}$.
Composing $\tau'\pmod{Q}$ with this inclusion, we get a representation $\tau : G_{\QQ,N\ell p} \to \GL_2(\overline{\QQ_p})$.

As $R(N)_{\rhob_c} \simeq R(N\ell)_{\rhob_c}/(u,v)$, it follows that $\tau$ is unramified at $\ell$.
Now Lemma~\ref{diagonallem} implies that $\tau|_{G_{\QQ_p}} = \begin{pmatrix} \eta_2 & 0 \\ * & \eta_1 \end{pmatrix}$ with $\eta_2$ an unramified character of $G_{\QQ_p}$ and $\det(\tau)=\epsilon_k\chi_p^{k-1}$, where $\epsilon_k$ is a character which is unramified at $p$.
We now claim that $\tau$ is irreducible.

If $\tau$ is reducible, then by combining Hensel's lemma and the proof of Lemma~\ref{redlem}, we get that there exist two characters $\psi_1,\psi_2 : G_{\QQ,N\ell p} \to R^{\times}$ such that $\tr(\rho) = \psi_1+\psi_2$.
Let $\mathfrak{m}$ be the maximal ideal of $R$ and for $i = 1,2$, let $\bar\psi_i = \psi_i \pmod{\mathfrak{m}}$.
Since $\bar\psi_1 + \bar\psi_2 = \chibar_1+\chibar_2$, Brauer-Nesbitt theorem implies that $\{\bar\psi_1,\bar\psi_2\} = \{\chibar_1,\chibar_2\}$.

Hence, $\psi_1$ and $\psi_2$ are lifts of characters $\chibar_1$ and $\chibar_2$.
Without loss of generality, suppose $\psi_i$ is a lift of $\chibar_i$.
Since $\rho(g_0)$ is diagonal (by our choice of $\rho_M^{\univ}$), it follows, from Lemma~\ref{redlem}, that $\rho = \begin{pmatrix} \psi_2 & * \\ 0 & \psi_1\end{pmatrix}$.
Therefore, $\psi_2|_{G_{\QQ_p}}=\eta_2$ and hence, $\psi_2$ is unramified at $p$.
As $p \nmid \phi(N\ell)$, it follows that $\psi_2 = \hat\chibar_2$ and $\psi_1$ and $\psi_2$ are unramified at $\ell$.

By Hensel's lemma, we conclude that $\psi_{2,\ell} =\psi_2(\frob_{\ell})$ and $\psi_{1,\ell} =\psi_1(\frob_{\ell})$.
Observe that $h_{\ell} \equiv \ell \pmod{uv}$ and $u,v, h_{\ell}-\psi_{\ell} \in (\alpha,\beta,\delta_k,u,P)$.
Therefore, it follows that $\ell\psi_1(\frob_{\ell}) = \psi_2(\frob_{\ell}) = \hat\chibar_2(\frob_{\ell})$.
%As $\chibar_1(\frob_{\ell}) \neq \chibar_2(\frob_{\ell})$, it follows, from Hensel's lemma, that $\{\chi(\frob_{\ell}),\ell\chi(\frob_{\ell})\} = \{\hat\chibar_2(\frob_{\ell}),\psi_1(\frob_{\ell})\}$.
Hence, $\det(\tau(\frob_{\ell})) = \ell^{-1}\hat\chibar_2(\frob_{\ell})^2$.
 But this gives us a contradiction as $k>2$ and $\det(\tau)=\epsilon_k\chi_p^{k-1}$.

Thus, we conclude that $\tau$ is irreducible.
Hence, \cite[Theorem A]{SW2} implies that $\tau$ is the $p$-adic Galois representation $\rho_f$ attached to an eigenform $f$ of tame level $N$.
From the previous paragraph, we get that $\rho_f|_{G_{\QQ_{\ell}}} \simeq \chi\oplus \chi\chi_p$.
Since $\ell \nmid N$, \cite[Lemma $5.1.1$]{D2} gives a contradiction. Hence, the Krull dimension of $R$ is $0$ if $v \in P$. So \cite[Theorem $10.2$]{E} implies that the Krull dimension of $R(N\ell)_{\rhob_c}/P$ is at most $4$.

Now suppose $v \not\in P$ which means $h_{\ell} - \psi_{\ell}^{-1} \in P$. Since $h_{\ell} - \psi_{\ell} \in P$, this means that $\psi_{\ell}^2-1 \in P$ and hence, $\psi_{\ell} + 1 \in P$. 
As $h_{\ell} \equiv \ell \pmod{uv}$, we have $\ell + 1 \in (u,P)$.
Recall that $R$ is finite over $W(\FF)$. So its Krull dimension is $0$ (as $\ell+1 \in (u,P)$).
Therefore, we conclude, using \cite[Theorem $10.2$]{E}, that the Krull dimension of $R(N\ell)_{\rhob_c}/P$ is at most $4$.
This proves the proposition.
\end{proof}

\subsection{Relationship between $R(M)_{\rhob_c}$ and $R(N)_{\rhob_c}$}
\label{mainsec}
Suppose $\dim(H^1_{\{p\}}(G_{\QQ,Np},\chibar^{-1})) =1$ and let $c \in H^1_{\{p\}}(G_{\QQ,Np},\chibar^{-1})$ be a non-zero element.
Let $\ell_1,\cdots,\ell_r$ be primes not dividing $Np$ such that for every $1 \leq i \leq r$, $p \nmid \ell_i-1$ and $\chibar|_{G_{\QQ_{\ell_i}}} = \omega_p|_{G_{\QQ_{\ell_i}}}$.
We will now prove the main result of this section which describes the structure of $R(N\prod_{i=1}^{r}\ell_i)_{\rhob_c}$ in terms of the structure of $R(N)_{\rhob_c}$ (in the spirit of \cite[Theorem 4.7]{Bo1}).

Let $M = N\prod_{i=1}^{r} \ell_i$.
Using the natural surjective map $G_{\QQ,Mp} \to G_{\QQ,N p}$, we can view $\rho^{\univ}_{N}$ as a deformation of $\rhob_c$ to $R(N)_{\rhob_c}$ for the group $G_{\QQ,Mp}$.
This induces a surjective map $\Phi_{M,N} : R(M)_{\rhob_c} \to R(N)_{\rhob_c}$.

Note that when we move from $R(N)_{\rhob_c}$ to $R(M)_{\rhob_c}$, we get some additional variables and relations.
Now $\rho^{\univ}_N(I_{\ell_i})$ is trivial for all $1 \leq i \leq r$ and $\ker(\Phi_{M,N})$ is generated by the entries of the matrices $\{\rho^{\univ}_M(I_{\ell_i}) - Id \mid 1 \leq i \leq r\}$.
Thus the additional variables arise from the images $\rho^{\univ}_M(I_{\ell_i})$ of the inertia groups at $\ell_i$'s.
Recall $\rho^{\univ}_M$ is tamely ramified at every $\ell_i$.
So the additional relations come from the relation between the tame inertia group and Frobenius at every $\ell_i$.
We will now give a reinterpretation of these additional variables and relations.

For every $1 \leq i \leq r$, the deformation $\rho^{\univ}_M|_{G_{\QQ_{\ell_i}}}$ of $\rhob_c|_{G_{\QQ_{\ell_i}}}$ gives a map $R_{\rhob_c,\ell_i} \to R(M)_{\rhob_c}$. 
Here $R_{\rhob_c,\ell_i}$ is the \emph{versal} deformation ring of $\rhob_c|_{G_{\QQ_{\ell_i}}}$ in $\mathcal{C}$ for every $1 \leq i \leq r$.
So the additional variables and relations occurring in $R(M)_{\rhob_c}$ arise from the images of the versal deformation rings $R_{\rhob_c,\ell_i}$ under the maps $R_{\rhob_c,\ell_i} \to R(M)_{\rhob_c}$ given above (see \cite[Theorem 3.1]{Bo3} for more details).
In a special case, we will make the relationship between the local (uni)versal deformation rings and $R(M)_{\rhob_c}$ more explicit (see Proposition~\ref{localstrprop} and Proposition~\ref{localramprop}) and it will be crucially used in the proof of Theorem~\ref{thme}.
We will now give a more precise description of these additional variables and relations.

Before proceeding further, we establish some more notation.
Let $s$ be the number of primes in the set $\{\ell_1,\cdots,\ell_r\}$ which are $-1 \pmod{p}$.
If $s \neq 0$, we assume, without loss of generality, that $p \mid \ell_j+1$ for all $1 \leq j \leq s$.
Let $n$ be the dimension of the tangent space of $R(N)_{\rhob_c}/(p)$.
We now define a power series ring $R_0$ in the following way:
\begin{enumerate}
\item If $s=0$, then define $R_0 := W(\FF) \llbracket X_1,\cdots,X_n,W_1,\cdots,W_r \rrbracket,$
\item If $0 < s <r$, then define $$R_0 := W(\FF) \llbracket X_1,\cdots,X_n,U_1,\cdots, U_s, V_1,\cdots, V_s, W_{s+1},\cdots,W_{r}\rrbracket,$$
\item If $s =r$, then define $R_0 := W(\FF) \llbracket X_1,\cdots,X_n,U_1,\cdots, U_r, V_1,\cdots, V_r \rrbracket$.
\end{enumerate}

 Recall that, by Lemma~\ref{ramlem}, we know that for every $1 \leq i \leq r$, there exists a $P_i \in \GL_2(R(M)_{\rhob_c})$ such that $P_i(\rho_M^{\univ}(g_{\ell_i}))P_i^{-1} = \begin{pmatrix} \psi_{2,i} & 0 \\ 0 & \psi_{1,i} \end{pmatrix}$ and
\begin{enumerate}
 \item $P_i(\rho_M^{\univ}(i_{\ell_i}))P_i^{-1} = \begin{pmatrix} 1 & 0 \\ w_i & 1 \end{pmatrix}$ if $p \nmid \ell_i+1$,
\item $P_i(\rho_M^{\univ}(i_{\ell_i}))P_i^{-1} = \begin{pmatrix} \sqrt{1+u_iv_i} & u_i \\ v_i & \sqrt{1+u_iv_i} \end{pmatrix}$ if $p \mid \ell_i+1$.
\end{enumerate}
If $p \mid \ell_j+1$, then let $g_{\ell_j}, h_{\ell_j} \in R(M)_{\rhob_c}$ be such that $$\rho_M^{univ}(i_{\ell_j})^{\ell_j} = \begin{pmatrix} g_{\ell_j}  & u_jh_{\ell_j} \\ v_jh_{\ell_j} &  g_{\ell_j} \end{pmatrix},$$ and define $\psi_j := \psi_{2,j}\psi_{1,j}^{-1}$.
%For every $1 \leq i \leq s$, let $\phi_i=\phi_{1,i}\phi_{2,i}^{-1}$.

Under the notation above, we have:

\begin{lem}
\label{powerlem}
There exists a surjective homomorphism $\mathcal{F} : R_0 \to R(M)_{\rhob_c}$ such that
\begin{enumerate}
\item If $s=0$, then $\mathcal{F}(W_i)=w_i$ for all $1 \leq i \leq r$,
\item If $0 < s <r$, then $\mathcal{F}(U_j)=u_j$ and $\mathcal{F}(V_j)=v_j$  for all $1 \leq j \leq s$ and $\mathcal{F}(W_i)=w_{i}$ for all $s+1 \leq i \leq r$,
\item If $s =r$, then $\mathcal{F}(U_j)=u_j$ and $\mathcal{F}(V_j)=v_j$  for all $1 \leq j \leq r$.
\end{enumerate}
\end{lem}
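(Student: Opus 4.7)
The plan is to define $\mathcal{F}$ on the variables and then check surjectivity by combining a kernel computation with a $J$-adic expansion. Throughout, let $\phi : R(M)_{\rhob_c} \twoheadrightarrow R(N)_{\rhob_c}$ be the surjection obtained from universality of $R(M)_{\rhob_c}$ applied to the pullback of $\rho_N^{\univ}$ along the quotient $G_{\QQ,Mp} \twoheadrightarrow G_{\QQ,Np}$, let $\mathfrak{m}_N$ denote the maximal ideal of $R(N)_{\rhob_c}$, and let $J \subset R(M)_{\rhob_c}$ be the ideal generated by the entire list $w_i, u_j, v_j$ produced by Lemma~\ref{ramlem}. Since $R(N)_{\rhob_c}/(p)$ has tangent space dimension $n$, one can pick $x_1, \dots, x_n \in \mathfrak{m}_N$ that topologically generate $R(N)_{\rhob_c}$ as a $W(\FF)$-algebra, and lift them via $\phi$ to elements $\tilde x_1, \dots, \tilde x_n \in \mathfrak{m}_M$. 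All of $w_i, u_j, v_j$ lie in $\mathfrak{m}_M$ because $\rhob_c$ is unramified at each $\ell_i$, so $\rho_M^{\univ}(i_{\ell_i}) \equiv I \pmod{\mathfrak{m}_M}$. Sending $X_i \mapsto \tilde x_i$ together with the specified assignments on the remaining variables thus defines a continuous $W(\FF)$-algebra map $\mathcal{F} : R_0 \to R(M)_{\rhob_c}$.

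The key intermediate claim is $\ker\phi = J$. The inclusion $J \subseteq \ker\phi$ is immediate because $\rho_N^{\univ}$ is unramified at every $\ell_i$. For the reverse inclusion, note that modulo $J$ we have $P_i \rho_M^{\univ}(i_{\ell_i}) P_i^{-1} \equiv I$, and hence $\rho_M^{\univ}(i_{\ell_i}) \equiv I \pmod{J}$, so $\rho_M^{\univ} \bmod J$ is trivial on the pro-$p$ tame inertia at each $\ell_i$ (the only part of $I_{\ell_i}$ on which a pro-$p$ representation can be nontrivial) and therefore factors through $G_{\QQ,Np}$. Universality of $R(N)_{\rhob_c}$ then produces an inverse to the quotient map $R(M)_{\rhob_c}/J \to R(N)_{\rhob_c}$, giving the canonical isomorphism $R(M)_{\rhob_c}/J \simeq R(N)_{\rhob_c}$.

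Granting this identification, the surjectivity of $\mathcal{F}$ follows by an iterative expansion. For $r \in R(M)_{\rhob_c}$, write $\phi(r) = f_0(x_1, \dots, x_n)$ for some $f_0 \in W(\FF)[[X_1, \dots, X_n]]$; then $r - f_0(\tilde x) \in J$ and so equals $\sum_k \alpha_k h_k$ with each $h_k$ among the $w_i, u_j, v_j$ and $\alpha_k \in R(M)_{\rhob_c}$. Applying the same procedure to each $\alpha_k$ and iterating produces, at the $t$-th stage, an expression $r = F_t(\tilde x, w, u, v) + \epsilon_t$, where $F_t \in R_0$ has degree $\leq t$ in the new variables and $\epsilon_t \in J^{t+1} \subseteq \mathfrak{m}_M^{t+1}$. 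Since both $R_0$ and $R(M)_{\rhob_c}$ are complete in their maximal-ideal adic topologies, the $F_t$ converge in $R_0$ to some $F$ with $\mathcal{F}(F) = r$, proving that $\mathcal{F}$ is surjective. The main technical step is the kernel identification in the second paragraph, which relies on the $w_i, u_j, v_j$ fully capturing the inertia data at each $\ell_i$; the remaining iterative expansion is a routine Nakayama-style convergence argument.
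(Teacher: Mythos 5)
Your proof is correct, and it takes a genuinely different route from the paper's. The paper proves Lemma~\ref{powerlem} by induction on $r$, invoking B\"{o}ckle's structure theorem \cite[Theorem 4.7]{Bo1} at each step (the same theorem that simultaneously produces the explicit kernel generators needed for Proposition~\ref{strprop}); the only explicit ingredient the paper states is the isomorphism $R(M)_{\rhob_c}/(w_{n+1}) \simeq R(M')_{\rhob_c}$ (resp.\ $R(M)_{\rhob_c}/(u_{n+1},v_{n+1}) \simeq R(M')_{\rhob_c}$) for the one extra prime $\ell_{n+1}$, after which it defers to B\"{o}ckle. You instead give a self-contained proof that handles all $r$ primes at once: you establish directly that the kernel of the canonical surjection $R(M)_{\rhob_c} \twoheadrightarrow R(N)_{\rhob_c}$ is exactly the ideal $J$ generated by the full list of $w_i, u_j, v_j$ (using that $\rho_M^\univ$ is tamely ramified at each $\ell_i$ with pro-$p$ inertia image topologically generated by $\rho_M^\univ(i_{\ell_i})$, so killing $J$ kills all of $I_{\ell_i}$), and then run a complete-Nakayama expansion along the $J$-adic filtration. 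What each approach buys: yours isolates exactly what is needed for this lemma (surjectivity alone) without appealing to the heavier external theorem, and makes the role of the inertia generators transparent; the paper's phrasing is more economical because it reuses \cite[Theorem 4.7]{Bo1} — which does more work — and the inductive set-up is what later feeds into Proposition~\ref{strprop}, where the generators of $I_0 = \ker(\mathcal{F})$ actually need to be identified, something your argument here does not attempt. Both kernel identifications (yours all at once, the paper's one prime at a time) are the same observation; the difference is that you then finish with a bare topological Nakayama argument rather than citing B\"{o}ckle for the lift of generators.
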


\begin{proof}
We prove the lemma by induction. If $r=1$, then the lemma follows directly from \cite[Theorem 4.7]{Bo1}.
Assume the lemma is true for $r=m$. Now suppose $r=m+1$ and let $M'= \frac{M}{\ell_{m+1}}$.
Note that if $p \nmid \ell_{m+1}+1$, then $R(M)_{\rhob_c}/(w_{m+1}) \simeq R(M')_{\rhob_c}$ and if $p \mid \ell_{m+1}+1$, then $R(M)_{\rhob_c}/(u_{m+1},v_{m+1}) \simeq R(M')_{\rhob_c}$.
Now the lemma follows by combining the induction hypothesis and \cite[Theorem 4.7]{Bo1}.
\end{proof}

Now we are ready to state the main result of this section (we keep the notation as above):

\begin{prop}
\label{strprop}
Let $\mathcal{F}$ be the morphism obtained in Lemma~\ref{powerlem} and let $I_0:=\ker(\mathcal{F})$. Then:
\begin{enumerate}
\item If $s=0$, then there exist $f_1,\cdots, f_{n-3},g_1,\cdots,g_r \in R_0$ such that $\mathcal{F}(g_i) = \psi_{1,i}-\ell_i\psi_{2,i}$ for all $1 \leq i \leq r$ and $I_0$ is generated by the set $$\{f_1,\cdots,f_{n-3},W_1g_1,\cdots,W_rg_r\},$$
\item If $0 <s < r$, then there exist $$f_1,\cdots, f_{n-3},h_1,\cdots,h_s,h'_1,\cdots,h'_s,g_{s+1},\cdots,g_{r} \in R_0$$ such that $\mathcal{F}(h_j) = h_{\ell_j} - \psi_j \text{ and } \mathcal{F}(h'_j) = h_{\ell_j} - \psi_j^{-1}$ for all $1 \leq j \leq s$ and $\mathcal{F}(g_i) = \psi_{1,i}-\ell_{i}\psi_{2,i}$ for all $s+1 \leq i \leq r$ and $I_0$ is generated by the set $$\{f_1,\cdots,f_{n-3},W_{s+1}g_{s+1},\cdots,W_{r}g_{r},U_1h_1,\cdots, U_sh_s,V_1h'_1,\cdots,V_sh'_s\},$$
\item If $s=r$, then there exist $f_1,\cdots, f_{n-3},h_1,\cdots,h_r,h'_1,\cdots,h'_r \in R_0$ such that $\mathcal{F}(h_j) = h_{\ell_j} - \psi_j \text{ and }\mathcal{F}(h'_j) = h_{\ell_j} - \psi_j^{-1}$ for all $1 \leq j \leq r$ and $I_0$ is generated by the set $$\{f_1,\cdots,f_{n-3},U_1h_1,\cdots, U_rh_r,V_1h'_1,\cdots,V_rh'_r\}.$$
\end{enumerate}
In each of these cases, $$R(N)_{\rhob_c} \simeq W(\FF) \llbracket X_1,\cdots,X_n \rrbracket/(\bar{f_1},\cdots,\overline{f_{n-3}}),$$ where $\bar{f_i}$ is the image of $f_i$ modulo the ideal generated by $W_i$'s, $U_j$'s and $V_j$'s.
\end{prop}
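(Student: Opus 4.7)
The plan is to proceed by induction on $r$, mirroring the inductive proof of Lemma~\ref{powerlem} and taking \cite[Theorem 4.7]{Bo1} as the main input at each step. The base case $r=0$ is precisely the content of Lemma~\ref{dimlem}: $R(N)_{\rhob_c}$ is a local complete intersection of Krull dimension $4$ with tangent dimension $n$, so one can choose $f_1,\ldots,f_{n-3} \in W(\FF)[[X_1,\ldots,X_n]]$ presenting $R(N)_{\rhob_c}$ as required.

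For the inductive step, fix $\ell = \ell_r$ and set $M' = M/\ell$. By the induction hypothesis, $R(M')_{\rhob_c}$ has a presentation $R_0'/I_0'$ of the form described in the proposition for the primes $\ell_1,\ldots,\ell_{r-1}$. Recall from the proof of Lemma~\ref{powerlem} that $R(M)_{\rhob_c}/(w_r) \simeq R(M')_{\rhob_c}$ when $p \nmid \ell+1$, and $R(M)_{\rhob_c}/(u_r,v_r) \simeq R(M')_{\rhob_c}$ when $p \mid \ell+1$. Apply \cite[Theorem 4.7]{Bo1} to this extension: when $p \nmid \ell+1$ it yields $R(M)_{\rhob_c} \simeq R(M')_{\rhob_c}[[W_r]]/(W_r g_r)$ with $g_r$ lifting $\psi_{1,r} - \ell_r\psi_{2,r}$, and when $p \mid \ell+1$ it yields $R(M)_{\rhob_c} \simeq R(M')_{\rhob_c}[[U_r,V_r]]/(U_r h_r, V_r h'_r)$ with $h_r, h'_r$ lifting $h_{\ell_r} - \psi_r$ and $h_{\ell_r}-\psi_r^{-1}$. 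Substituting the induction hypothesis in each case produces exactly the stated generating set for $I_0$.

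The final assertion about $R(N)_{\rhob_c}$ is then immediate from the construction: reducing $R_0$ modulo all the new variables $W_i, U_j, V_j$ recovers $W(\FF)[[X_1,\ldots,X_n]]$ and kills every relation of the form $W_i g_i$, $U_j h_j$, $V_j h'_j$, leaving only the images $\bar f_1,\ldots,\overline{f_{n-3}}$, which present $R(N)_{\rhob_c}$ by the base case.

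The main technical hurdle is justifying that \cite[Theorem 4.7]{Bo1}, originally stated for the passage from $R(N)_{\rhob_c}$ to $R(N\ell)_{\rhob_c}$, applies when starting from $R(M')_{\rhob_c}$ with ramification already adjoined at $\ell_1,\ldots,\ell_{r-1}$. One needs to verify that each step of B\"ockle's argument uses only the local structure at $\ell$ (as described in Lemma~\ref{ramlem}) together with the abstract universality of the starting ring, both of which hold in the iterative setting. Concurrently one should check tightness of the presentation via a dimension count: iterating Proposition~\ref{oneprimelem} gives $\dim R(M)_{\rhob_c} = 4$, which matches $\dim R_0$ minus the number of relations produced in each case ($(n-3)+r$, $(n-3)+r+s$, or $(n-3)+2r$), so no relation in the claimed generating set is redundant.
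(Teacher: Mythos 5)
The proposal correctly identifies the induction-on-$r$ skeleton and correctly invokes Lemma~\ref{dimlem} for the base case and \cite[Theorem 4.7]{Bo1} for the inductive step, but it contains a genuine gap at the crucial moment.

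You write that applying \cite[Theorem 4.7]{Bo1} to the extension $R(M')_{\rhob_c} \to R(M)_{\rhob_c}$ ``yields $R(M)_{\rhob_c} \simeq R(M')_{\rhob_c}[[W_r]]/(W_r g_r)$'' (resp.\ $R(M')_{\rhob_c}[[U_r,V_r]]/(U_rh_r,V_rh'_r)$), from which ``substituting the induction hypothesis \dots produces exactly the stated generating set.'' This is not what B\"ockle's theorem gives. There is no canonical section $R(M')_{\rhob_c}\to R(M)_{\rhob_c}$ of the quotient $R(M)_{\rhob_c}\twoheadrightarrow R(M)_{\rhob_c}/(w_r)\simeq R(M')_{\rhob_c}$, so one cannot simply adjoin a variable over $R(M')_{\rhob_c}$ and impose one relation. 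What one actually obtains is a presentation $R_0/I_0$ in which the lifts of the previous generators of $I_0'$ acquire error terms lying in the ideal generated by the new variable(s): instead of $W_ig_i$ one only knows $I_0$ contains elements of the form $W_ig_i+F_i$ with $F_i\in(W_{m+1})$ (similarly $U_jh_j+G_j$, $V_jh'_j+H_j$). The content of the proposition is precisely that these error terms can be stripped away, and the proof of that fact is the heart of the matter, not a formality.

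The paper removes the error terms by the following argument: since $W_ig_i\in\ker(\mathcal{F})$, one can write $W_ig_i=F'_i(W_ig_i+F_i)+F''_i$ with $F''_i$ in the ideal generated by the other relations. If $F'_i$ were not a unit, then $\ker(\mathcal{F})$ would be generated by a set in which $W_ig_i+F_i$ is replaced by $F_i\in(W_{m+1})$; passing to the quotient $R(N\ell_i)_{\rhob_c}\simeq R(M)_{\rhob_c}/J_i$ one loses this relation entirely and the Krull dimension of $R(N\ell_i)_{\rhob_c}$ would be at least $5$, contradicting Proposition~\ref{oneprimelem}. Hence $F'_i$ is a unit and the error term can be absorbed. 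Your closing remark that ``iterating Proposition~\ref{oneprimelem} gives $\dim R(M)_{\rhob_c}=4$, \dots so no relation is redundant'' is a non-redundancy (regular sequence) count; it does not show that the non-redundant generators can be chosen to be the specific products $W_ig_i$, $U_jh_j$, $V_jh'_j$ rather than $W_ig_i+F_i$ etc. That is the step your proposal is missing, and it is where Proposition~\ref{oneprimelem} actually gets used.
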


\begin{proof}
The proof is similar to that of \cite[Proposition 5.3.1]{D2}.
We will prove the proposition by using induction on $r$. For $r=1$, the proposition follows by combining Lemma~\ref{ramlem}, \cite[Theorem 2.4]{Bo2} and \cite[Theorem 4.7]{Bo1}.
Assume the proposition is true for $r=m$.
Now suppose $r=m+1$.
%If $t=0$, then the proposition follows from \cite{D}. 

First assume $0 \leq s <r$. So according to our convention, $p \nmid \ell_{m+1}-1$.
By Lemma~\ref{powerlem}, we know that the map $\mathcal{F}$ is surjective.
Hence, \cite[Theorem 4.7]{Bo1} and the induction hypothesis imply that
\begin{enumerate}
\item If $s=0$, then there exist $$f_1,\cdots, f_{n-3},g_1,\cdots,g_{m+1},F_1,\cdots,F_{m} \in R_0$$ such that $\mathcal{F}(g_i) = \psi_{1,i}-\ell_i\psi_{2,i}$ for all $1 \leq i \leq m+1$, $F_1,\cdots,F_{m}\in (W_{m+1})$ and $I_0$ is generated by the set $$S := \{f_1,\cdots,f_{n-3},W_1g_1+F_1,W_2g_2+F_2,\cdots,W_mg_m+F_m,W_{m+1}g_{m+1}\}.$$
\item If $s \neq 0$, then there exist $$f_1,\cdots, f_{n-3},h_1,\cdots,h_s,h'_1,\cdots,h'_s,g_{s+1},\cdots,g_{m+1} \in R_0$$ and $G_1,\cdots,G_s,H_1,\cdots,H_s,F_{s+1},\cdots,F_{m} \in R_0$ such that $\mathcal{F}(h_j) = h_{\ell_j} - \psi_j \text{ and } \mathcal{F}(h'_j) = h_{\ell_j} - \psi_j^{-1}$ for all $1 \leq j \leq s$, $\mathcal{F}(g_i) = \psi_{1,i}-\ell_i\psi_{2,i}$ for all $s+1 \leq i \leq m+1$, $G_1,\cdots,G_s,H_1,\cdots,H_s,F_{s+1},\cdots,F_{m}\in (W_{m+1})$ and $I_0$ is generated by the set $\{f_1,\cdots,f_{n-3},W_{s+1}g_{s+1}+F_{s+1},\cdots,W_{m}g_{m}+F_{m},W_{m+1}g_{m+1}\} \cup \{U_1h_1+G_1,\cdots, U_sh_s+G_s,V_1h'_1+H_1,\cdots,V_sh'_s+H_s\},$
\end{enumerate}
%Suppose we are in the first case i.e. $s=0$. 
Note that $\{W_{s+1}g_{s+1},\cdots,W_{m}g_{m}\} \subset \ker(\mathcal{F}).$
For $s+1 \leq i \leq m$, there is an $F'_i \in R_0$ and an element $F''_i$ of the ideal generated by the set $S \setminus \{W_ig_i+F_i\}$ such that $W_ig_i = F'_i(W_ig_i+F_i) + F''_i$.
Suppose $F'_i$ is not a unit. Then we get that $W_ig_i$ is in the ideal generated by the set $(S \setminus \{W_ig_i+F_i\}) \cup \{F_i\}$. 
As $W_ig_i \in \ker(\mathcal{F})$, $F_i \in \ker(\mathcal{F})$.
Hence, it follows that $\ker(\mathcal{F})$ is generated by the set $(S \setminus \{W_ig_i+F_i\}) \cup \{F_i\}$.

Now $F_i \in (W_{m+1})$ and $R(N\ell_i)_{\rhob_c} \simeq R(M)_{\rhob_c}/J_i$, where $J_i$ is the ideal generated by the set $ \{w_j\}_{s+1 \leq j \leq m+1, j \neq i} \cup \{u_j,v_j\}_{1 \leq j \leq s}$ if $s \neq 0$ and by the set $ \{w_j\}_{1 \leq j \leq m+1, j \neq i}$ if $s=0$.
Hence, we get, using \cite[Theorem 10.2]{E}, that $R(N\ell_i)_{\rhob_c}$ has Krull dimension at least $5$.
But Proposition~\ref{oneprimelem} gives a contradiction to this. Therefore, $F'_i$ is a unit for all $s+1 \leq i \leq m$.

This means that $\ker(\mathcal{F})$ is generated by the set $$S':=(S\setminus \{W_{s+1}g_{s+1}+F_{s+1},\cdots,W_{m}g_{m}+F_{m}\}) \cup \{W_{s+1}g_{s+1},\cdots,W_{m}g_{m}\}$$ which proves the proposition in $s=0$ case.

Now suppose $s \neq 0$. Note that $\{U_1h_1,\cdots, U_sh_s,V_1h'_1,\cdots,V_sh'_s\} \subset I_0$.
For $1 \leq j \leq s$, there are elements $G'_j, H'_j \in R_0$, an element $G''_j$ of the ideal generated by the set $S' \setminus \{U_jh_j+G_j\}$ and an element $H''_j$ of the ideal generated by the set $S' \setminus \{V_jh'_j+H_j\}$ such that $U_jh_j = G'_j(U_jh_j+G_j) + G''_j$ and $V_jh'_j = H'_j(V_jh'_j+H_j) + H''_j$.
Note that $R(N\ell_j)_{\rhob_c} \simeq R(M)_{\rhob_c}/I_j$, where $I_j$ is the ideal generated by the set $ \{w_i\}_{s+1 \leq i \leq m+1} \cup \{u_i,v_i\}_{1 \leq i \leq s, i \neq j}$.
Hence, if either $G'_j$ or $H'_j$ is not a unit, then, by applying the same logic as above, we get that $R(N\ell_j)_{\rhob_c}$ has Krull dimension at least $5$.
But Proposition~\ref{oneprimelem} gives a contradiction to this.
Therefore, we get that $G'_j$ and $H'_j$ are units for all $1 \leq j \leq s$.

This means that $\ker(\mathcal{F})$ is generated by the set $$(S'\setminus \{U_1h_1+G_1,\cdots,U_{s}h_{s}+G_{s},V_1h'_1+H_1,\cdots,V_{s}h'_{s}+H_{s}\}) \cup \{U_1h_1,\cdots,U_{s}h_{s},V_1h'_1,\cdots,V_{s}h'_{s}\}$$ which proves the proposition in the case $0 < s <r$.

Now assume $s=r$.
As seen before, \cite[Theorem 4.7]{Bo1} and the induction hypothesis imply that there exist $$G_1,\cdots,G_m,H_1,\cdots H_m \in (U_{m+1},V_{m+1})$$ such that $\ker(\mathcal{F})$ is generated by $$S := \{f_1,\cdots,f_{n-3},U_1h_1+G_1,\cdots, U_mh_m+G_m,V_1h'_1+H_1,\cdots,V_mh'_m+H_m, U_{m+1}h_{m+1}, V_{m+1}h'_{m+1}\}.$$
Note that $$\{U_1h_1,\cdots, U_mh_m,V_1h'_1,\cdots,V_mh'_m\} \subset \ker(\mathcal{F}).$$

For $1 \leq j \leq m$, there are elements $G'_j, H'_j \in R_0$, an element $G''_j$ of the ideal generated by the set $S \setminus \{U_jh_j+G_j\}$ and an element $H''_j$ of the ideal generated by the set $S \setminus \{V_jh'_j+H_j\}$ such that $U_jh_j = G'_j(U_jh_j+G_j) + G''_j$ and $V_jh'_j = H'_j(V_jh'_j+H_j) + H''_j$.
Note that $R(N\ell_j)_{\rhob_c} \simeq R(M)_{\rhob_c}/I_j$, where $I_j$ is the ideal generated by the set $ \{u_i,v_i\}_{1 \leq i \leq m+1, i \neq j}$.
Hence, if either $G'_j$ or $H'_j$ is not a unit, then, by applying the same logic as above, we get that $R(N\ell_j)_{\rhob_c}$ has Krull dimension at least $5$.
But Proposition~\ref{oneprimelem} gives a contradiction to this.
Therefore, we get that $G'_j$ and $H'_j$ are units for all $1 \leq j \leq m$.

This means that $\ker(\mathcal{F})$ is generated by the set $$(S\setminus \{U_1h_1+G_1,\cdots,U_{m}h_{m}+G_{m},V_1h'_1+H_1,\cdots,V_{m}h'_{m}+H_{m}\}) \cup \{U_1h_1,\cdots,U_{m}h_{m},V_1h'_1,\cdots,V_{m}h'_{m}\}$$ which proves the proposition in the remaining case.
\end{proof}

We will now prove some results comparing the deformation rings $R(N)_{\rhob_c}$ and $R(M)_{\rhob_c}$ with local deformation rings. These results will be crucially used in the proof of Theorem~\ref{thme}.
For the rest of the section, assume $H^1_{\{p\}}(G_{\QQ,Np},\chibar^{-1}) = 0$.
Let $\ell$ be a prime such that $p \mid \ell+1$ and $\chibar|_{G_{\QQ_{\ell}}} = \omega_p|_{G_{\QQ_{\ell}}}$.

It follows from Greenberg--Wiles formula (\cite[Theorem 2]{Wa}) that $$\dim(H^1_{\{p\}}(G_{\QQ,N\ell p},\chibar^{-1})) = \dim(H^1_{\{p\}}(G_{\QQ,Np},\chibar^{-1})) + \dim(H^0(G_{\QQ_{\ell}},\chibar\omega_p|_{G_{\QQ_{\ell}}})) = 0+1=1.$$
Let $c \in H^1_{\{p\}}(G_{\QQ,N\ell p},\chibar^{-1})$ be a non-zero element. 
Let $\rhob_c : G_{\QQ,N\ell p} \to \GL_2(\FF)$ be the representation corresponding to $c$ as chosen in \S\ref{defring}.

As $H^1_{\{p\}}(G_{\QQ,Np},\chibar^{-1})=0$, it follows that $c$ is ramified at $\ell$.
Let $R_{\rhob_c,\ell}$ be the universal deformation ring for the representation $\rhob_c|_{G_{\QQ_{\ell}}} : G_{\QQ_{\ell}} \to \GL_2(\FF)$ in $\mathcal{C}$.
Note that $\chibar|_{G_{\QQ_{\ell}}} = \omega_p|_{G_{\QQ_{\ell}}} \neq 1$ as $p \nmid \ell -1$ and $c$ is ramified at $\ell$.
Since the only $G_{\QQ_\ell}$-endomorphisms of $\rhob_c|_{G_{\QQ_\ell}}$ are multiplication by scalars, the existence of $R_{\rhob_c,\ell}$ follows from \cite{M} and \cite{Ra}.

For a representation $\rho : G \to \GL_2(\FF)$, denote by $\ad(\rho)$ the representation on $M_2(\FF)$ in which the action of every $g \in G$ on $M_2(\FF)$ is given by conjugation by $\rho(g)$.
%Denote by $\ad(\rhob_c|_{G_{\QQ_{\ell}}})$ the representation on $M_2(\FF)$ in which the action of every $g \in G_{\QQ_\ell}$ on $M_2(\FF)$ is given by conjugation by $\rhob_c(g)$.
Denote by $\ad^0(\rho)$ the subrepresentation of $\ad(\rho)$ consisting of matrices with trace $0$.
Note that $\ad(\rhob_c|_{G_{\QQ_{\ell}}}) = \ad(\rhob_c)|_{G_{\QQ_{\ell}}}$ and $\ad^0(\rhob_c|_{G_{\QQ_{\ell}}}) = \ad^0(\rhob_c)|_{G_{\QQ_{\ell}}}$.
By abuse of notation, we will denote $\chi_p|_{G_{\QQ_{\ell}}}$ and $\omega_p|_{G_{\QQ_{\ell}}}$ by $\chi_p$ and $\omega_p$, respectively.
We begin by analyzing the structure of $R_{\rhob_c,\ell}$.

\begin{lem}
\label{tangentlem}
The dimension of the tangent space of $R_{\rhob_c,\ell}/(p)$ is $2$.
\end{lem}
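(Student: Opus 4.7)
The plan is to identify the tangent space of $R(N\ell)_{\rhob_c}/(p)$ as an $\FF$-vector space with $H^1(G_{\QQ,N\ell p},\ad(\rhob_c))$, which is a standard consequence of the fact that the tangent space parametrizes lifts of $\rhob_c$ to $\FF[\epsilon]/(\epsilon^2)$. It then suffices to show $h^1(G_{\QQ,N\ell p},\ad(\rhob_c))\geq n+3$. The argument will combine the global Euler--Poincar\'e characteristic formula with the filtration of $\ad(\rhob_c)$ by upper-triangular matrices.

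First, any invariant element of $\ad(\rhob_c)$ must commute with $\rhob_c(g_0)$, which by the choice of $g_0$ is diagonal with distinct diagonal entries, so must be diagonal; since $c$ is nonzero, $\rhob_c$ is non-split, and the invariant element must further commute with a non-trivially upper-triangular $\rhob_c(g)$, hence must be scalar. Thus $h^0(G_{\QQ,N\ell p},\ad(\rhob_c))=1$. Next, because $\rhob_c$ is odd, complex conjugation acts on $\ad(\rhob_c)$ with a $2$-dimensional $(-1)$-eigenspace (the off-diagonal entries in a basis diagonalizing $\rhob_c(c)$), so the global Euler--Poincar\'e formula yields $h^0-h^1+h^2=-2$ and thus $h^1(G_{\QQ,N\ell p},\ad(\rhob_c))=3+h^2(G_{\QQ,N\ell p},\ad(\rhob_c))$. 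It therefore suffices to prove $h^2(G_{\QQ,N\ell p},\ad(\rhob_c))\geq n$.

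For this, let $U\subset \ad(\rhob_c)$ denote the subrepresentation of matrices that are upper-triangular in the basis making $\rhob_c$ upper-triangular; a direct matrix computation shows that $U$ is stable and that the quotient $\ad(\rhob_c)/U$, represented by the lower-left entry which transforms by $\chibar_1/\chibar_2=\chibar$, is isomorphic to $\chibar$. Since $G_{\QQ,N\ell p}$ has $p$-cohomological dimension $2$ (as $p$ is odd and both $p$ and $\infty$ lie in the ramification set), the long exact sequence in cohomology associated to $0\to U\to \ad(\rhob_c)\to \chibar\to 0$ yields a surjection $H^2(G_{\QQ,N\ell p},\ad(\rhob_c))\twoheadrightarrow H^2(G_{\QQ,N\ell p},\chibar)$.

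Finally, since $\chibar$ is odd ($\chibar(c)=\chibar_1(c)/\chibar_2(c)=-1$), the Euler--Poincar\'e formula applied to $\chibar$ reads $h^0-h^1+h^2=-1$; combined with $h^0(\chibar)=0$ (because $\chibar\neq 1$) and the identity $h^1(G_{\QQ,N\ell p},\chibar)=n+1$ established in the paragraph preceding the lemma, this gives $h^2(G_{\QQ,N\ell p},\chibar)=n$. Putting everything together, $h^1(G_{\QQ,N\ell p},\ad(\rhob_c))\geq 3+n$, as required. The only point requiring care is to identify the direction of the filtration correctly, so that the quotient is $\chibar$ (carrying the large $H^2$ of dimension $n$) rather than $\chibar^{-1}$, which would not suffice.
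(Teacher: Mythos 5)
Your proof is correct and follows essentially the same approach as the paper: both use the filtration of the adjoint with quotient $\chibar$, the global Euler characteristic formula, and $h^1(G_{\QQ,N\ell p},\chibar)=n+1$ to extract $h^2(\chibar)=n$ and deduce the bound. The only cosmetic difference is that the paper first splits $\ad(\rhob_c)=\ad^0(\rhob_c)\oplus 1$ and works with $\ad^0$, while you work with $\ad$ directly and absorb the extra $+1$ into $h^0(\ad)=1$; these bookkeepings are equivalent.
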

\begin{proof}
The dimension of the  tangent space of $R_{\rhob_c,\ell}/(p)$ is $\dim(H^1(G_{\QQ_\ell},\ad(\rhob_c|_{G_{\QQ_{\ell}}})))$ (see \cite[Theorem 2.4]{Bo2}).
However, $\ad(\rhob_c|_{G_{\QQ_\ell}}) = \ad^0(\rhob_c|_{G_{\QQ_\ell}}) \oplus 1$ and $\dim(H^1(G_{\QQ_\ell}, 1))=1$ since $p \nmid \ell-1$.
So it suffices to prove $\dim(H^1(G_{\QQ_\ell},\ad^0(\rhob_c|_{G_{\QQ_{\ell}}}))) = 1$.

Observe that the subspace $V$ of upper triangular matrices with trace $0$ forms a $G_{\QQ_\ell}$-subrepresentation of $\ad^0(\rhob_c|_{G_{\QQ_{\ell}}})$. 
It is easy to verify that $V$ is isomorphic to $\rhob'_{c} := (\rhob_c \otimes \chibar_1^{-1})|_{G_{\QQ_{\ell}}}$.

Note that $c|_{G_{\QQ_{\ell}}}$, the restriction of the global Galois cohomology class $c$ to $G_{\QQ_{\ell}}$, gives a non-zero element of $H^1(G_{\QQ_{\ell}}, \omega_p)$.
Let $\mathfrak{h} \in Z^1(G_{\QQ_{\ell}},\omega_p)$ be a cocycle such that its image in $H^1(G_{\QQ_{\ell}},\omega_p)$ is $c|_{G_{\QQ_{\ell}}}$.
An element $\sigma$ of $H^1(G_{\QQ_{\ell}}, \rhob'_c)$ gives a representation $\rhob_\sigma : G_{\QQ_{\ell}} \to \GL_3(\FF)$ such that 
$$\rhob_\sigma(g) = \begin{pmatrix} \omega_p(g) & \mathfrak{h}(g) & F(g)\\ 0 & 1 & b(g) \\ 0 & 0 & 1\end{pmatrix} \text{ for all } g \in G_{\QQ_{\ell}}.$$
Moreover, $\sigma \neq 0$ if and only if $\rhob_\sigma \not\simeq \rhob'_c \oplus 1$.

Note that $b \in H^1(G_{\QQ_{\ell}},1)$ and a representation of the form given above exists if and only if the coboundary of $-F : G_{\QQ_{\ell}} \to \FF$ is $c|_{G_{\QQ_{\ell}}} \cup b$, the cup product of $c|_{G_{\QQ_{\ell}}}$ and $b$.
As $c|_{G_{\QQ_{\ell}}} \neq 0$ and $\chibar^{-1}|_{G_{\QQ_\ell}} = \omega_p$, we get, from the local Tate duality, that $c|_{G_{\QQ_{\ell}}} \cup b =0$ if and only if $b=0$. 

Now suppose $b=0$ in the representation $\rhob_\sigma$ given above. Then we get that $F \in Z^1(G_{\QQ_{\ell}},\omega_p)$.
Since $\dim(H^1(G_{\QQ_{\ell}},\omega_p))=1$, it follows that there exists a $\lambda \in \FF$ and a coboundary $\mathfrak{h}' \in B^1(G_{\QQ_{\ell}},\omega_p)$ such that
$$\rhob_\sigma(g) = \begin{pmatrix} \omega_p(g) & \mathfrak{h}(g) & \lambda \mathfrak{h}(g) +\mathfrak{h}'(g)\\ 0 & 1 & 0 \\ 0 & 0 & 1\end{pmatrix} \text{ for all } g \in G_{\QQ_{\ell}}.$$
 Now a simple calculation shows that $\rhob_\sigma \simeq \rhob'_c \oplus 1$.
Thus, from the analysis given above, we conclude that $H^1(G_{\QQ_{\ell}},V)= H^1(G_{\QQ_{\ell}}, \rhob'_c) = 0$.

We have the following exact sequence of $G_{\QQ_\ell}$-representations:
$$0 \to V \to \ad^0(\rhob_c|_{G_{\QQ_{\ell}}}) \to \omega_p\to 0.$$
As $H^1(G_{\QQ_{\ell}},V) =0$, local Euler characteristic formula implies that $H^2(G_{\QQ_{\ell}},V) =0$.
Hence, we get the following exact sequence of Galois cohomology groups:
$$0 \to H^1(G_{\QQ_{\ell}}, V) \to H^1(G_{\QQ_\ell},\ad^0(\rhob_c|_{G_{\QQ_{\ell}}})) \to H^1(G_{\QQ_{\ell}},\omega_p) \to 0.$$
Since $\dim(H^1(G_{\QQ_{\ell}},\omega_p)) =1$ and $H^1(G_{\QQ_{\ell}}, V)=0$, we get that $\dim(H^1(G_{\QQ_\ell},\ad^0(\rhob_c|_{G_{\QQ_{\ell}}}))) = 1$ which proves the lemma.
\end{proof}

Let $\rho_{\ell} : G_{\QQ_{\ell}} \to \GL_2(R_{\rhob_c,\ell})$ be a (representation in the equivalence class giving the) universal deformation of $\rhob_c|_{G_{\QQ_{\ell}}}$ and let $\mathfrak{m}_\ell$ be the maximal ideal of $R_{\rhob_c,\ell}$.
We will now use the notation established before Lemma~\ref{ramlem}.
As both $\chibar_1$ and $\chibar_2$ are unramified at $\ell$ and $\rhob_c$ is ramified at $\ell$, it follows that $\rhob_c(I_\ell)$ is a non-trivial $p$-group.
Therefore, $\rho_{\ell}(I_{\ell})$ is a pro-$p$ group.
This means  $\rho_{\ell}$ is tamely ramified. Hence, $\rho_{\ell}(I_{\ell})$ is topologically generated by $\rho_{\ell}(i_{\ell})$ and $\rho_{\ell}(G_{\QQ_\ell})$ is topologically generated by $\rho_{\ell}(i_{\ell})$ and $\rho_{\ell}(g_{\ell})$.

From the proof of Lemma~\ref{ramlem}, it follows that there exists a matrix $\mathcal{P} \in \GL_2(R_{\rhob_c,\ell})$ such that $\mathcal{P} \pmod{\mathfrak{m}_\ell} = \begin{pmatrix} 1 & *\\ 0 & 1 \end{pmatrix}$ and $\mathcal{P}\rho_{\ell}(g_{\ell})\mathcal{P}^{-1}= \begin{pmatrix} \phi_2 & 0\\ 0 & \phi_1 \end{pmatrix}$.
Let $\phi = \phi_1\phi_2^{-1}$.
Suppose $$\mathcal{P}\rho_{\ell}(i_{\ell})\mathcal{P}^{-1}= \begin{pmatrix} 1+x & y\\ z & 1+w \end{pmatrix}.$$ Note that $x,z,w \in \mathfrak{m}_\ell$ and $y \in R_{\rhob_c,\ell}^{\times}$.
Let $x_{\ell}$, $w_{\ell}$, $f_{\ell}$ and $f'_{\ell}$ be elements of $R_{\rhob_c,\ell}$ such that 
$$\mathcal{P}\rho_{\ell}(i_{\ell})^{\ell}\mathcal{P}^{-1}= \begin{pmatrix} 1+x_{\ell} & f'_{\ell}y\\ f_{\ell}z & 1+w_{\ell} \end{pmatrix}.$$

\begin{lem}
\label{maxlem}
The maximal ideal $\mathfrak{m}_\ell$ of $R_{\rhob_c,\ell}$ is generated by the set $\{p,z,\phi_1-\widehat{\bar\chi_1}(\text{Frob}_{\ell})\}$.
\end{lem}
\begin{proof}
Let $R = R_{\rhob_c,\ell}/(p,z,\phi_1-\widehat{\bar\chi_1}(\text{Frob}_{\ell}))$ and let $\pi : \GL_2(R_{\rhob_c,\ell}) \to \GL_2(R)$ be the natural surjective map induced by the quotient map $R_{\rhob_c,\ell} \to R$.
Let $\bar{\mathcal{P}} = \pi(\mathcal{P})$ and $\rhob_{\ell} = \pi \circ \rho_{\ell} : G_{\QQ_{\ell}} \to \GL_2(R)$.
For $r \in R_{\rhob_c,\ell}$, denote its image under the natural surjective map $R_{\rhob_c,\ell} \to R$ by $\bar{r}$.

Note that, $\bar{\mathcal{P}}\rhob_{\ell}(g_{\ell})\bar{\mathcal{P}}^{-1} = \begin{pmatrix} \bar{\phi_2} & 0\\ 0 & {\chibar_1}(\text{Frob}_{\ell}) \end{pmatrix}$ and $\bar{\mathcal{P}}\rhob_{\ell}(i_{\ell})\bar{\mathcal{P}}^{-1} = \begin{pmatrix} 1+\bar{x} & \bar{y} \\ 0 & 1+\bar{w}\end{pmatrix}$.
As $\rhob_{\ell}(G_{\QQ_{\ell}})$ is topologically generated by $\rhob_{\ell}(g_{\ell})$ and $\rhob_{\ell}(i_{\ell})$, it follows that $\rhob_{\ell} \simeq \begin{pmatrix} \psi_2 & * \\ 0 & \psi_1\end{pmatrix}$, where $ \psi_i : G_{\QQ_{\ell}} \to R^{\times}$ is a character lifting $\chibar_i|_{G_{\QQ_{\ell}}}$ for $i=1,2$.
This isomorphism is given by conjugation by $\bar{\mathcal{P}}$.

As $p \nmid \ell-1$, $\psi_1(i_{\ell}) = \psi_2(i_{\ell}) = 1$. 
On the other hand, $\psi_2(i_{\ell}) = 1+\bar{x}$ and $\psi_1(i_{\ell}) = 1+\bar{w}$. So we get $\bar{x} = \bar{w}=0$.
Note that $\psi_1(g_{\ell}) = {\chibar_1}(\text{Frob}_{\ell})$ and $\psi_2(g_\ell) = \bar{\phi_2}$.
Hence, it follows that $\psi_1 = \chibar_1|_{G_{\QQ_{\ell}}}$.
As $\bar{y} \in R^{\times}$, the relation $g_{\ell}i_{\ell}g_{\ell}^{-1} = i_{\ell}^{\ell}$ implies that $\bar{\phi_2} -\ell\chibar_1(\text{Frob}_{\ell}) =0$.
Therefore, we get that $\bar{\phi_2} = \chibar_2(\text{Frob}_{\ell})$ as $R$ is an $\FF$-algebra.
Thus, we conclude that $\psi_2 = \chibar_2|_{G_{\QQ_{\ell}}}$.

%Note that $R$ is an $\FF$-algebra.  
Let $f : R \to \FF[\epsilon]/(\epsilon^2)$ be a morphism and let $\rho = f \circ \rhob_{\ell}$.
Suppose $f$ is surjective. As $R$ is a quotient of $R_{\rhob_c,\ell}$, it follows that $\rho$ is a non-trivial deformation of $\rhob_c|_{G_{\QQ_{\ell}}}$ i.e. $\rho \not\simeq \rhob_c|_{G_{\QQ_{\ell}}} \otimes_{\FF} \FF[\epsilon]/(\epsilon^2)$.
We obtain, from the previous paragraphs, that $\rho \simeq \begin{pmatrix} \chibar_2|_{G_{\QQ_\ell}} & * \\ 0 & \chibar_1|_{G_{\QQ_\ell}}\end{pmatrix}$.

Since $\chibar^{-1}|_{G_{\QQ_{\ell}}} =\omega_p$ and $\dim(H^1(G_{\QQ_{\ell}},\omega_p))=1$, we conclude, using the arguments used in the proof of Lemma~\ref{surjlem}, that $\rho \simeq \rhob_c|_{G_{\QQ_{\ell}}} \otimes_{\FF} \FF[\epsilon]/(\epsilon^2)$.
Thus, we get a contradiction to our assumption that $f$ is surjective.
Therefore, it follows that there exists no surjective morphism from $R \to \FF[\epsilon]/(\epsilon^2)$.
Recall that $R$ is an $\FF$-algebra.
So this implies that $R \simeq \FF$ which means $\mathfrak{m}_\ell = (p,z,\phi_1-\widehat{\bar\chi_1}(\text{Frob}_{\ell}))$.
This proves the lemma.
\end{proof}

Let $\mathcal{F}_{\ell} : W(\FF)\llbracket S, T \rrbracket \to R_{\rhob_c,\ell}$ be the morphism sending $S$ to $z$ and $T$ to $\phi_1-\widehat{\chibar_1}(\text{Frob}_{\ell})$.
We will now prove a result about the structure of $R_{\rhob_c,\ell}$ (similar to \cite[Lemma 3.10(ii)]{Bo2}).

\begin{lem}
\label{localstrlem}
The morphism $\mathcal{F}_{\ell}$ is surjective and $\ker(\mathcal{F}_{\ell}) = (SF_{\ell})$ for some non-zero $F_{\ell} \in (p,S,T) \subset W(\FF)\llbracket S, T \rrbracket$.
\end{lem}
\begin{proof}
Combining Lemma~\ref{maxlem} with \cite[Theorem 7.16(b)]{E}, we get that the morphism $\mathcal{F}_{\ell}$ is surjective.
From Lemma~\ref{tangentlem}, we know that $\dim(H^1(G_{\QQ_\ell},\ad(\rhob_c|_{G_{\QQ_{\ell}}})))=2$.
Since $\dim(H^0(G_{\QQ_\ell},\ad(\rhob_c|_{G_{\QQ_{\ell}}})))=1$, it follows, from local Euler characteristic formula, that $\dim(H^2(G_{\QQ_\ell},\ad(\rhob_c|_{G_{\QQ_{\ell}}})))=1$.
Hence, it follows, from \cite[Theorem 2.4]{Bo2}), that $\ker(\mathcal{F}_{\ell})$ is either principal or $(0)$.

Using the relation $g_{\ell}i_{\ell}g_{\ell}^{-1} = i_{\ell}^{\ell}$, we conclude that $(\phi-f_{\ell})z=0$ and $\phi^{-1}-f'_{\ell}=0$.
We now claim that $z \neq 0$. If $z=0$, then it follows, from the surjectivity of $\mathcal{F}_{\ell}$, that the dimension of the tangent space of $R_{\rhob_c,\ell}/(p)$ is at most $1$ which contradicts Lemma~\ref{tangentlem}.
This proves our claim. So we get that $\ker(\mathcal{F}_\ell) \neq (S)$.

 From the description of $P\rho_\ell(g_\ell)P^{-1}$ and $P\rho_\ell(i_\ell)P^{-1}$ given before Lemma~\ref{maxlem}, it follows that there exist characters $\xi_1, \xi_2 : G_{\QQ_{\ell}} \to (R_{\rhob_c,\ell}/(z))^{\times}$ lifting $\chibar_1|_{G_{\QQ_{\ell}}}$ and $\chibar_2|_{G_{\QQ_{\ell}}}$ such that $\tr(\rho_{\ell} \pmod{(z)}) = \xi_1 + \xi_2$.
As $p \nmid \ell-1$, it follows that $x \equiv 0 \pmod{(z)}$ and $w \equiv 0 \pmod{(z)}$.
Therefore, using induction, we obtain that $f_{\ell} \equiv \ell \pmod{(z)}$ and $f'_{\ell} \equiv \ell \pmod{(z)}$.

If $\ker(\mathcal{F}_{\ell})=0$, then $\mathcal{F}_{\ell}$ is an isomorphism and hence, $R_{\rhob_c,\ell}$ is an integral domain.
Since $z \neq 0$, this would imply that $\phi-f_{\ell} = 0$.
We will now prove $\phi-f_{\ell} \neq 0$ by contradiction.
Suppose $\phi-f_{\ell}=0$.
Since $\phi^{-1}-f'_{\ell}=0$, we get, from previous paragraph, that $\ell - \ell^{-1}=0$ in $R_{\rhob_c,\ell}/(z)$ which means $\ell^2-1=0$ in $R_{\rhob_c,\ell}/(z)$.

Let $V'$ be the free $W(\FF)$-module of rank $1$ on which $G_{\QQ_{\ell}}$ acts via the $p$-adic cyclotomic character $\chi_p$.
From the local Euler characteristic formula and \cite[Corollary 2.2]{T}, it follows that $\dim(H^1(G_{\QQ_{\ell}},V'))=1$ (as a $W(\FF)$-module) and the map $H^1(G_{\QQ_{\ell}},V') \to H^1(G_{\QQ_{\ell}}, \omega_p)$ induced by the natural surjective map $V' \to V'/pV'$ is surjective.
Therefore, there exists a representation $\tau : G_{\QQ_{\ell}} \to \GL_2(W(\FF))$ lifting $\rhob_c|_{G_{\QQ_\ell}}$ such that $\tau = \begin{pmatrix} \chi_p\widehat{\chibar_2} & * \\ 0 & \widehat{\chibar_1} \end{pmatrix}$ (see the proof of \cite[Proposition 3.4]{D} for more details).

Let $h : R_{\rhob_c,\ell} \to W(\FF)$ be the map induced by $\tau$.
Note that $\tr(\tau(i_{\ell})) =2$.
Hence, if $h(x)=a$, then $h(w)=-a$.
Moreover, we also have $\tr(\tau(g_\ell i_\ell)) =\tr(\tau(g_\ell))$.
So if $h(\phi_i) = \lambda_i$ for $i=1,2$, then we get $\lambda_2(1+a) + \lambda_1(1-a) = \lambda_2+\lambda_1$.
Thus, we get $a(\lambda_2 -\lambda_1)=0$.
Since $\phi_2 - \phi_1 \in R_{\rhob_c,\ell}^{\times}$, it follows that $a=0$.

Observe that $\det(\tau(i_{\ell})) = (1+a)(1-a) - h(yz) =1$. As $a=0$, we have $h(yz)=0$. Since $y \in R_{\rhob_c,\ell}^{\times}$, we get $h(z)=0$.
Therefore, $h : R_{\rhob_c,\ell} \to W(\FF)$ factors through $R_{\rhob_c,\ell}/(z)$. Since $\ell^2-1=0$ in $R_{\rhob_c,\ell}/(z)$, we get a contradiction.
Hence, we conclude that $\phi - f_{\ell} \neq 0$ and $\ker(\mathcal{F}_{\ell}) \neq (0)$.

Choose $G \in W(\FF) \llbracket S, T \rrbracket$ such that $\mathcal{F}_{\ell}(G) = \phi -f_{\ell}$.
Note that $SG \in \ker(\mathcal{F}_{\ell})$.
Now suppose $\ker(\mathcal{F}_{\ell}) \not\subset (S)$.
Since $W(\FF) \llbracket S, T \rrbracket$ is an UFD and $\ker(\mathcal{F}_{\ell})$ is a non-zero principal ideal, this implies that $G \in \ker(\mathcal{F}_{\ell})$.
This means $\phi - f_{\ell} = 0$ which gives us a contradiction.
Therefore, we conclude that $\ker(\mathcal{F}_{\ell}) \subset (S)$ which finishes the proof of the lemma.
\end{proof}

As before, $R(N\ell)_{\rhob_c}$ be the universal deformation ring of the representation $\rhob_c: G_{\QQ,N\ell p} \to \GL_2(\FF)$ in $\mathcal{C}$ and let $\rho_{N\ell}^{\univ} : G_{\QQ,N\ell p} \to \GL_2(R(N\ell)_{\rhob_c})$ be a (representation in the equivalence class giving the) universal deformation of $\rhob_c$. 
Let $n$ be the dimension of the tangent space of $R(N\ell)_{\rhob_c}/(p)$.
By \cite[Theorem 2.4]{Bo2}, we have a presentation of $R(N\ell)_{\rhob_c}$ of the following form:
\begin{equation}\label{presentation} 0 \to J \to W(\FF)\llbracket X_1,\cdots,X_n \rrbracket \xrightarrow{\mathcal{F'}} R(N\ell)_{\rhob_c} \to 0,\end{equation}
where $J$ is an ideal generated by at most $n-3$ elements.
By Lemma~\ref{dimlem}, $R(N\ell)_{\rhob_c}$ is a local complete intersection ring of Krull dimension $4$.
Hence, it follows that the minimum number of generators of $J$ is $n-3$.

Note that, $\rho_{N\ell}^{\univ}|_{G_{\QQ_{\ell}}}$ is a deformation of $\rhob_c|_{G_{\QQ_\ell}}$ and hence, it induces a morphism $\text{res}_{\ell} : R_{\rhob_c,\ell} \to R(N\ell)_{\rhob_c}$. Composing it with the morphism $\mathcal{F}_{\ell}$, gives us a morphism $W(\FF)\llbracket S, T \rrbracket \to R(N\ell)_{\rhob_c}$.
From \cite[Theorem 7.16(a)]{E}, it follows that this morphism lifts to a morphism $\Xi_{\ell} : W(\FF)\llbracket S, T \rrbracket \to W(\FF)\llbracket X_1,\cdots,X_n \rrbracket$ such that the following diagram commutes:
\begin{equation}
\label{diagram}
\begin{tikzcd}
W(\FF)\llbracket S, T \rrbracket  \arrow{r}{\mathcal{F}_{\ell}} \arrow[swap]{d}{\Xi_{\ell}} & R_{\rhob_c,\ell} \arrow{d}{\text{res}_{\ell}} \\
W(\FF)\llbracket X_1,\cdots,X_n \rrbracket \arrow{r}{\mathcal{F'}}& R(N\ell)_{\rhob_c}.
\end{tikzcd}
\end{equation}
Recall that $\ker(\mathcal{F}_{\ell}) = (SF_{\ell})$ for some non-zero, non-unit $F_{\ell} \in W(\FF)\llbracket S, T \rrbracket$ (see Lemma~\ref{localstrlem}).
Hence, $\Xi_{\ell}(SF_{\ell}) \in J$.
We will now prove a result which relates global obstructions to lifting $\rhob_c$ with the local obstructions (at $\ell$) to lifting $\rhob_c$.

\begin{prop}
\label{localstrprop}
Suppose $\chibar \neq \omega_p$. Let $\Xi_{\ell} : W(\FF)\llbracket S, T \rrbracket \to W(\FF)\llbracket X_1,\cdots,X_n\rrbracket$ be a morphism such that the diagram given in \eqref{diagram} commutes.
Then there exists $\overline{f_2},\cdots,\overline{f_{n-3}} \in W(\FF)\llbracket X_1,\cdots,X_n \rrbracket$ such that $\{\Xi_{\ell}(SF_{\ell}),\overline{f_2},\cdots,\overline{f_{n-3}}\}$ is a minimal set of generators of $J$.
\end{prop}
\begin{proof}
Since $n$ is the dimension of the tangent space of $R(N\ell)_{\rhob_c}/(p)$, it follows, from \cite[Theorem 2.4]{Bo2}, that $n = \dim(H^1(G_{\QQ,N\ell p},\ad(\rhob_c)))$ and $n-3= \dim(H^2(G_{\QQ,N\ell p},\ad(\rhob_c)))$.

We will now describe \cite[Theorem 3.1]{Bo3} which will be crucially used in the proof.
For every prime $q \mid Np$, let $R_{\rhob_c,q}$ be the \emph{versal} deformation ring of $\rhob_c|_{G_{\QQ_q}} : G_{\QQ_q} \to \GL_2(\FF)$ in $\mathcal{C}$.
Let $h^1_q = \dim(H^1(G_{\QQ_q},\ad(\rhob_c|_{G_{\QQ_q}})))$ and $h^2_q = \dim(H^2(G_{\QQ,N\ell p},\ad(\rhob_c|_{G_{\QQ_q}})))$.
By \cite[Theorem 2.4]{Bo2}, we have a presentation of $R_{\rhob_c,q}$ of the following form:
$$ 0 \to J_q \to W(\FF)\llbracket X_1,\cdots,X_{h^1_q} \rrbracket \xrightarrow{\mathcal{F}_q} R_{\rhob_c,q} \to 0,$$
where $J_q$ is an ideal generated by at most $h^2_q$ elements.

The deformation  $\rho_{N\ell}^{\univ}|_{G_{\QQ_{q}}}$ of $\rhob_c|_{G_{\QQ_q}}$ induces a morphism $\text{res}_{q} : R_{\rhob_c,q} \to R(N\ell)_{\rhob_c}$.
Let $\mathcal{F'} : W(\FF)\llbracket X_1,\cdots,X_n \rrbracket \to R(N\ell)_{\rhob_c}$ be the surjective map given in the exact sequence \eqref{presentation} above.
By the logic used for $\ell$ above, we get a morphism $\Xi_q : W(\FF)\llbracket X_1,\cdots,X_{h^1_q} \rrbracket \to W(\FF)\llbracket X_1,\cdots,X_n \rrbracket$ such that the following diagram commutes:
\[ \begin{tikzcd}
W(\FF)\llbracket X_1,\cdots,X_{h^1_q} \rrbracket  \arrow{r}{\mathcal{F}_{q}} \arrow[swap]{d}{\Xi_{q}} & R_{\rhob_c,q} \arrow{d}{\text{res}_{q}} \\
W(\FF)\llbracket X_1,\cdots,X_n \rrbracket \arrow{r}{\mathcal{F'}}& R(N\ell)_{\rhob_c}.
\end{tikzcd}
\]

For a prime $q \mid Np$, let $\{h_1,\cdots,h_{d_q}\}$ be a minimal set of generators of $J_q$. Note that $d_q \leq h^2_q = \dim(H^2(G_{\QQ_q}, \ad(\rhob_c|_{G_{\QQ_q}})))$.
Define $$\Sh^2(\ad(\rhob_c)) := \ker\big(H^2(G_{\QQ,N\ell p}, \ad(\rhob_c)) \to H^2(G_{\QQ_{\ell}},\ad(\rhob_c|_{G_{\QQ_\ell}})) \times \prod_{q \mid Np} H^2(G_{\QQ_{q}},\ad(\rhob_c|_{G_{\QQ_q}}))\big).$$
By \cite[Theorem 3.1]{Bo3}, there exists a set $\{g_1,\cdots,g_d\} \subset W(\FF)\llbracket X_1,\cdots,X_n \rrbracket$ with $d \leq \dim(\Sh^2(\ad(\rhob_c)))$ such that $J$ is generated by the set $$T_0 := \{\Xi_{\ell}(SF_{\ell})\} \bigcup \cup_{q \mid Np} \{\Xi_q(h_1),\cdots,\Xi_q(h_{d_q})\} \bigcup \{g_1,\cdots,g_d\}.$$
Recall that the minimum number of generators of $J$ is $n-3$.
Thus to prove the proposition, it suffices to prove that $|T_0| =n-3$.
As $\dim(H^2(G_{\QQ_\ell},\ad(\rhob_c|_{G_{\QQ_\ell}}))) =1$, this implies that
\begin{equation}
\label{jaast} 
n-3 \leq |T_0| = 1 + \sum_{q \mid Np} d_q + d \leq \sum_{q \mid N\ell p} \dim(H^2(G_{\QQ_q}, \ad(\rhob_c|_{G_{\QQ_q}}))) + \dim(\Sh^2(\ad(\rhob_c))).
\end{equation}

Note that $\ad(\rhob_c)^*$, the dual of $\ad(\rhob_c)$, is isomorphic to $\ad(\rhob_c)$. The semisimplification of $\ad(\rhob_c)$ is $\chibar \oplus \chibar^{-1} \oplus \mathbf{1}^{\oplus 2}$, where $\mathbf{1}$ is the trivial representation.
%It is easy to verify that the only $1$-dimensional subrepresentations of $\ad(\rhob_c)$ are $\mathbf{1}$ and $\chibar^{-1}$.
Recall that we have assumed $\chibar \neq \omega_p, \omega_p^{-1}$.
So we have $H^0(G_{\QQ, N\ell p}, \ad(\rhob_c)^* \otimes \omega_p) =0$.
Therefore, by Poitou-Tate exact sequence (\cite[8.6.10]{NSW}), we get that the map 
$$H^2(G_{\QQ,N\ell p}, \ad(\rhob_c)) \to H^2(G_{\QQ_{\ell}},\ad(\rhob_c|_{G_{\QQ_\ell}})) \times \prod_{q \mid Np} H^2(G_{\QQ_{q}},\ad(\rhob_c|_{G_{\QQ_q}}))$$
obtained by restriction onto each component is surjective.
Therefore, we get 
$$n-3 = \dim(H^2(G_{\QQ,N\ell p},\ad(\rhob_c))) = \sum_{q \mid N\ell p}\dim(H^2(G_{\QQ_q}, \ad(\rhob_c|_{G_{\QQ_q}}))) + \dim(\Sh^2(\ad(\rhob_c))).$$
Combining this with \eqref{jaast}, we conclude that $|T_0| \leq n-3$ and hence, $|T_0|=n-3$.
This proves the proposition.
\end{proof}

Let $\ell_1,\cdots,\ell_r$ be primes not dividing $N\ell p$ such that for every $1 \leq i \leq r$, $p \nmid \ell_i-1$ and $\chibar|_{G_{\QQ_{\ell_i}}} = \omega_p|_{G_{\QQ_{\ell_i}}}$.
Let $M:= N\ell\prod_{i=1}^{r}\ell_i$.
We will now prove a result which will combine analogues of Proposition~\ref{localstrprop} and Proposition~\ref{strprop} for $R(M)_{\rhob_c}$.

Let $R_0$ be the power series ring defined before Lemma~\ref{powerlem} for the tuple $(R(N\ell)_{\rhob_c}, M)$.
So it is the ring obtained by replacing $N$ by $N\ell$ in loc.cit.
Since $\dim(H^1_{\{p\}}(G_{\QQ, N\ell p},\chibar^{-1})) =1$, we can indeed make this replacement.
Let $\mathcal{F} :  R_0 \to R(M)_{\rhob_c}$ be the morphism constructed in Lemma~\ref{powerlem}.

Let $\text{res}_{\ell,M} : R_{\rhob_c,\ell} \to R(M)_{\rhob_c}$ be the morphism induced by the deformation $\rho^{\univ}_M|_{G_{\QQ_\ell}}$ of $\rhob_c|_{G_{\QQ_\ell}}$.
Repeating the argument given above for $R(N\ell)_{\rhob_c}$, we get a morphism $\Xi_{\ell,M} : W(\FF)\llbracket S, T \rrbracket \to R_0$ such that the following diagram commutes:
\begin{equation}
\label{diagram1}
 \begin{tikzcd}
W(\FF)\llbracket S, T \rrbracket  \arrow{r}{\mathcal{F}_{\ell}} \arrow[swap]{d}{\Xi_{\ell,M}} & R_{\rhob_c,\ell} \arrow{d}{\text{res}_{\ell,M}} \\
R_0 \arrow{r}{\mathcal{F}}& R(M)_{\rhob_c}.
\end{tikzcd}
\end{equation}

\begin{prop}
\label{localramprop}
In the description of $I_0:=\ker(\mathcal{F})$ obtained in Proposition~\ref{strprop}, $f_1$ can be taken to be $\Xi_{\ell,M}(SF_{\ell})$ in all the cases.
\end{prop}
\begin{proof}
Let $\Psi : R_0 \to W(\FF)\llbracket X_1,\cdots,X_n\rrbracket$ be the natural surjective map obtained by going modulo the ideal generated by the set:
\begin{enumerate}
\item $\{W_1,\cdots,W_r\}$, when $s=0$,
\item $\{U_1,\cdots,U_s\} \cup \{V_1,\cdots,V_s\} \cup \{W_{s+1},\cdots,W_r\}$, when $0 < s < r$.
\item  $\{U_1,\cdots,U_r\} \cup \{V_1,\cdots,V_r\}$, when $s=r$.
\end{enumerate}

Recall that $\Phi_{M,N\ell} : R(M)_{\rhob_c} \to R(N\ell)_{\rhob_c}$ is the surjective map induced by $\rho^{\univ}_{N\ell}$ when viewed as a representation of $G_{\QQ,Mp}$.
%Using the natural surjective map $G_{\QQ,Mp} \to G_{\QQ,N\ell p}$, we can view $\rho^{\univ}_{N\ell}$ as a deformation of $\rhob_c$ to $R(N\ell)_{\rhob_c}$ for the group $G_{\QQ,Mp}$.
%This induces a surjective map $\Phi_{M,N\ell} : R(M)_{\rhob_c} \to R(N\ell)_{\rhob_c}$.

From Proposition~\ref{strprop}, it follows that there exists a surjective map $\mathcal{F''} : W(\FF)\llbracket X_1,\cdots,X_n\rrbracket \to  R(N\ell)_{\rhob_c}$ such that the following diagram commutes:
\begin{equation}
\label{diagram2}
 \begin{tikzcd}
R_0 \arrow{r}{\mathcal{F}} \arrow[swap]{d}{\Psi} & R(M)_{\rhob_c} \arrow{d}{\Phi_{M,N\ell}} \\
W(\FF)\llbracket X_1,\cdots,X_n\rrbracket \arrow{r}{\mathcal{F''}}& R(N\ell)_{\rhob_c}.
\end{tikzcd}
\end{equation}

Recall, from \S\ref{notsec}, that the maps $i_{\ell,M} : G_{\QQ_\ell} \to G_{\QQ,Mp}$ and $i_{\ell,N\ell} : G_{\QQ_\ell} \to G_{\QQ,N\ell p}$ are both induced by a fixed embedding $\iota_{\ell} : \overline{\QQ} \to \overline{\QQ_\ell}$.
So the following diagram commutes (where the top right arrow is the natural surjection $G_{\QQ,Mp} \to G_{\QQ,N\ell p}$):
\begin{equation}
\label{akruti}
\begin{tikzcd}
G_{\QQ,Mp}  \arrow{rr} && G_{\QQ,N\ell p}.\\
& G_{\QQ_\ell} \arrow[swap]{ur}{i_{\ell,N\ell}} \arrow{ul}{i_{\ell,M}} \\
\end{tikzcd}
\end{equation}
Hence by combining diagram~\eqref{akruti}, the universal property of $R_{\rhob_c,\ell}$ and definitions of $\text{res}_{\ell}$ and $\text{res}_{\ell,M}$, we get that the following diagram commutes:
\begin{equation}
\label{diagram3}
\begin{tikzcd}
R(M)_{\rhob_c}  \arrow{rr}{\Phi_{M,N\ell}} && R(N\ell)_{\rhob_c}.\\
& R_{\rhob_c,\ell} \arrow[swap]{ur}{\text{res}_{\ell}} \arrow{ul}{\text{res}_{\ell,M}} \\
\end{tikzcd}
\end{equation}

Therefore, by combining diagrams \eqref{diagram1}, \eqref{diagram2} and \eqref{diagram3}, we conclude that $\Psi \circ \Xi_{\ell,M}$ is a lift of $\text{res}_{\ell} \circ \mathcal{F}_\ell$ to $W(\FF)\llbracket X_1,\cdots,X_n\rrbracket $ i.e. the following diagram commutes:
\[ \begin{tikzcd}
W(\FF)\llbracket S, T \rrbracket  \arrow{r}{\mathcal{F}_{\ell}} \arrow[swap]{d}{\Psi \circ \Xi_{\ell,M}} & R_{\rhob_c,\ell} \arrow{d}{\text{res}_{\ell}} \\
W(\FF)\llbracket X_1,\cdots,X_n \rrbracket \arrow{r}{\mathcal{F''}}& R(N\ell)_{\rhob_c}.
\end{tikzcd}
\]

From Proposition~\ref{localstrprop}, we know that there exists $\overline{f_2},\cdots,\overline{f_{n-3}} \in W(\FF) \llbracket X_1,\cdots,X_n\rrbracket$ such that $\ker(\mathcal{F''}) = (\Psi \circ \Xi_{\ell,M}(SF_\ell),\overline{f_2},\cdots,\overline{f_{n-3}})$ and the set $\{\Psi \circ \Xi_{\ell,M}(SF_\ell),\overline{f_2},\cdots,\overline{f_{n-3}}\}$ is a minimal set of generators of $\ker(\mathcal{F''})$.
Note that Proposition~\ref{localstrprop} holds for any surjective morphism $W(\FF)\llbracket X_1,\cdots,X_n\rrbracket \to  R(N\ell)_{\rhob_c}$ as the conclusion of \cite[Theorem 3.1]{Bo3} does not depend on the choice of this surjective morphism.
So we can apply it here.

Therefore, using Proposition~\ref{strprop} and Nakayama's lemma, we conclude that the elements $f_1,\cdots,f_{n-3}$ appearing in the description of $I_0$ in Proposition~\ref{strprop} can be chosen such that $\Psi(f_1) = \Psi \circ \Xi_{\ell,M}(SF_\ell)$ and $\Psi(f_j) =\overline{f_j}$ for all $2 \leq j \leq n-3$.

Thus there exists a $\Theta \in \ker(\Psi)$ such that $f_1=\Xi_{\ell,M}(SF_\ell) + \Theta$.
Let $S_0$ be the set of generators of $I_0$ appearing in Proposition~\ref{localstrprop} with $f_1,\cdots,f_{n-3}$ chosen as in the previous paragraph.
Let $I'_0$ be the ideal of $R_0$ generated by the set $S_0 \setminus\{f_1\}$.
By Lemma~\ref{localstrlem}, we know that $\Xi_{\ell,M}(SF_{\ell}) \in \ker(\mathcal{F})$.
Therefore, there exists a $\omega \in R_0$ and $e \in I'_0$ such that \begin{equation}\label{barobar}\Xi_{\ell,M}(SF_{\ell}) = \omega(\Xi_{\ell,M}(SF_\ell) + \Theta) + e.\end{equation}

Suppose $\omega \not\in R_0^{\times}$. Then $1 - \omega \in R_0^{\times}$ and hence, it follows, from \eqref{barobar}, that $\Xi_{\ell,M}(SF_{\ell})$ lies in the ideal generated by $S'_0 := (S_0 \setminus\{f_1\}) \cup \{\Theta\}$.
Note that, from Proposition~\ref{strprop} and the description of $S_0$ obtained there, it follows that $\ker(\mathcal{F''})$ is generated by the set $\Psi(\ker(\mathcal{F}))$.
Hence, $\ker(\mathcal{F''})$ is generated by the set $\Psi(S'_0)$. 
Since $\Theta \in \ker(\Psi)$, the description of $S_0$ obtained in Proposition~\ref{strprop} implies that $\ker(\mathcal{F''})$ is generated by $\{\overline{f_2},\cdots,\overline{f_{n-3}}\}$.
This contradicts the fact that the minimum number of generators of $\ker(\mathcal{F''})$ is $n-3$ (see the discussion after Lemma~\ref{localstrlem}).

Therefore, we conclude that $\omega \in R_0^{\times}$. This means that $\ker(\mathcal{F})$ is generated by the set $(S_0 \setminus\{f_1\}) \cup \{\Xi_{\ell,M}(SF_\ell)\}$ which proves the proposition.
\end{proof}

 \section{Proof of Theorem~\ref{thmb}}
\label{proofthm}
% For a non-zero element $b \in H^1(G_{\QQ,Np},\chibar)$, let $\rhob_b : G_{\QQ,Np} \to \GL_2(\FF)$ be the representation such that $\rhob_b(g_0) = \begin{pmatrix} \chibar_1(g_0) & 0\\ 0 & \chibar_2(g_0) \end{pmatrix}$ and $\rhob_b(g) = \begin{pmatrix} \chibar_1(g) & *\\ 0 & \chibar_2(g) \end{pmatrix}$ for all $g \in G_{\QQ_Np}$ where $*$ corresponds to $b$. 
We are now ready to prove Theorem~\ref{thmb}. We keep the notation established in the previous section. Let $M=N\prod_{i=1}^{r}\ell_i$ and $s$ be the number of primes in the set $\{\ell_1,\cdots,\ell_r\}$ which are $-1 \pmod{p}$. If $s \neq 0$, then we assume, without loss of generality, that $p \mid \ell_j+1$ for all $1 \leq j \leq s$. 
Let $k>2$ be an integer such that $k \equiv k_0 \pmod{p-1}$ and $\alpha,\beta,\delta_k$ be the elements of $R(M)_{\rhob_c}$ found in Lemma~\ref{diagonallem}.
If $s \neq 0$, then let $u_1,\cdots,u_s$ be the elements of $R(M)_{\rhob_c}$ found in Lemma~\ref{ramlem}.

We now define an ideal $\mathcal{I}_0$ of $R(M)_{\rhob_c}$ in the following way:
\begin{enumerate}
\item If $s=0$, then define $\mathcal{I}_0 = (\alpha,\beta,\delta_k, \psi_{1,1}-\ell_1\psi_{2,1},\cdots,\psi_{1,r}-\ell_r\psi_{2,r})$ ,
\item If $0<s<r$, then define $$\mathcal{I}_0 = (\alpha,\beta,\delta_k,u_1,\cdots,u_s,h_{\ell_1}-\psi_1^{-1},\cdots,h_{\ell_s}-\psi_s^{-1},\psi_{1,s+1}-\ell_{s+1}\psi_{2,s+1},\cdots,\psi_{1,r}-\ell_r\psi_{2,r}),$$
\item If $s=r$, then define $\mathcal{I}_0 = (\alpha,\beta,\delta_k,u_1,\cdots,u_r,h_{\ell_1}-\psi_1^{-1},\cdots,h_{\ell_r}-\psi_r^{-1})$.
\end{enumerate}
%If $s=0$, then define $R:=R(M)_{\rhob_c}/(\alpha,\beta,\delta_k)$ and If $s \neq 0$, then define $R:=R(M)_{\rhob_c}/(\alpha,\beta,\delta_k,u_1,\cdots,u_s)$.

Let $R:= R(M)_{\rhob_c}/\mathcal{I}_0$.
From Lemma~\ref{surjlemma}, we get that $R$ is a finite $W(\FF)$-algebra and hence, its Krull dimension is at most $1$.
Let $\mathcal{F} : R_0 \to R(M)_{\rhob_c}$ be the surjective map constructed in Lemma~\ref{powerlem}.
Choose $f_{\alpha}, f_{\beta}, f_{\delta_k} \in R_0$ such that $\mathcal{F}(f_{\alpha})= \alpha$, $\mathcal{F}(f_{\beta})= \beta$ and $\mathcal{F}(f_{\delta_k})= \delta_k$.
Let $\mathcal{H'} : R_0 \to R$ be the map obtained by composing $\mathcal{F}$ with the quotient map $R(M)_{\rhob_c} \to R$.

From the description of $R_0$ given before Lemma~\ref{powerlem} and the description of $\ker(\mathcal{F})$ obtained in Proposition~\ref{strprop}, we get that:
\begin{enumerate}
\item If $s=0$, then $R_0$ has Krull dimension $n+r+1$ and $\ker(\mathcal{H'})$ is generated by the set $$\{f_{\alpha},f_{\beta},f_{\delta_k},f_1,\cdots,f_{n-3},g_1,\cdots,g_r\}.$$
So it is generated by $n+r$ elements.
\item If $0 < s < r$, then $R_0$ has Krull dimension $n+s+r+1$ and $\ker(\mathcal{H'})$ is generated by the set 
$$\{f_{\alpha},f_{\beta},f_{\delta_k},f_1,\cdots,f_{n-3},U_1,\cdots,U_s,h'_1,\cdots,h'_s,g_{s+1},\cdots,g_r\}.$$
So it is generated by $n+s+r$ elements.
\item If $s=r$, then $R_0$ has Krull dimension $n+2r+1$ and $\ker(\mathcal{H'})$ is generated
the set $$\{f_{\alpha},f_{\beta},f_{\delta_k},f_1,\cdots,f_{n-3},U_1,\cdots,U_r,h'_1,\cdots,h'_r\}.$$
 So it is generated by $n+2r$ elements.
\end{enumerate}
Therefore, in each case, using \cite[Theorem 10.2]{E}, we obtain that the Krull dimension of $R$ is at least $1$.
Hence, we conclude that $R$ is a finite $W(\FF)$-algebra of Krull dimension $1$.

Let $P$ be a minimal prime of $R$. So $R/P$ is an integral domain which is finite over $W(\FF)$. Hence, it is isomorphic to a subring of $\overline{\QQ_p}$. So fix an inclusion $i : R/P \to \overline{\QQ_p}$.
Let $\rho_P : G_{\QQ,Mp} \to \GL_2(R/P)$ be the representation obtained by composing $\rho_M^{\univ}$ with the natural surjective map $R(M)_{\rhob_c} \to R/P$. 
Using the inclusion $i$, we can view $\rho_P$ as a representation over $\overline{\QQ_p}$.

\begin{prop}
\label{modularprop}
Let $P$ be a minimal prime of $R$ and $\rho_P$ be the corresponding representation of $G_{\QQ,Mp}$ as above.
Then $\rho_P$ is the $p$-adic Galois representation attached to a newform $f$ of level $M'$ such that $\ell_i \mid\mid M'$ and $p \nmid M'$.
\end{prop}
\begin{proof}
We will prove the existence of $f$ using the modularity lifting theorem of Skinner--Wiles (\cite[Theorem A]{SW2}).
In order to apply their theorem, first observe that, by Lemma~\ref{diagonallem}, $\rho_P : G_{\QQ,Mp} \to \GL_2(\overline{\QQ_p})$ is ordinary at $p$ i.e. $\rho_P|_{G_{\QQ_p}} = \begin{pmatrix} \eta_2 & 0\\ *& \eta_1\end{pmatrix}$, where $\eta_2$ an unramified character of $G_{\QQ_p}$. Moreover, $\det(\rho_P)=\epsilon_k\chi_p^{k-1}$, where $\epsilon_k$ is a character of $G_{\QQ,Mp}$ of finite order. 
We claim that $\rho_P$ is irreducible. 

If $\rho_P$ is reducible, then, by combining Brauer-Nesbitt Theorem and the proof of Lemma~\ref{redlem}, we get that there exist characters $\chi_1,\chi_2 : G_{\QQ,Mp} \to (R/P)^{\times}$ such that $\chi_i$ is a lift of $\chibar_i$ for $i=1,2$ and $\rho_P = \begin{pmatrix} \chi_2 & * \\ 0  & \chi_1 \end{pmatrix}$ (see proof of Proposition~\ref{oneprimelem} for more details).
Therefore, we have $\chi_2|_{G_{\QQ_p}} = \eta_2$. As $\eta_2$ is unramified at $p$ and $p \nmid \phi(M)$, it follows that $\chi_2 = \widehat\chibar_2$ and $\chi_1$ is unramified at every $\ell_i$.

If $p \nmid \ell_i+1$, then the image of $\psi_{1,i}-\ell_i\psi_{2,i}$ in $R$ is $0$. 
If $p \mid \ell_i+1$, then the images of $u_i$ and $h_{\ell_i}-\psi_i^{-1}$ in $R$ are $0$.
As $h_{\ell_i} \equiv \ell_i \pmod{u_iv_i}$, it follows that the image of $\psi_i^{-1} - \ell_i$ in $R$ is $0$.
Therefore, we get that $\chi_1\chi_2^{-1}|_{G_{\QQ_{\ell_i}}} = \chi_p|_{G_{\QQ_{\ell_i}}}$.
It follows, from the fact that $\chi_2$ is a character of finite order, that $\det(\rho_P)\chi_p^{-1}|_{G_{\QQ_{\ell_i}}}$ is a character of finite order.
Since $k >2$, the description of $\det(\rho_P)$ obtained in the previous paragraph gives a contradiction.

Therefore, we conclude that $\rho_P$ is irreducible.
Hence, by \cite[Theorem A]{SW2}, $\rho_P$ is the $p$-adic Galois representation attached to a newform $f$ of some level $M'$.
Since $\rho_P$ is ordinary, it follows, from \cite[Proposition 3.6]{Mo}, that $f$ is $p$-ordinary i.e. $U_p$-eigenvalue of $f$ is a $p$-adic unit. 
As $k > 2$, \cite[Lemma 5.1.2]{D2} implies that $p \nmid M'$.

%Now we will prove that $\ell_i \mid \mid M$ for all $1 \leq i \leq r$.
From the analysis given above, it follows that the semi-simplification of $\rho_f |_{G_{\QQ_{\ell_i}}}$ is $\chi_{\ell_i} \oplus \chi_{\ell_i}\chi_p$ for some character $\chi_{\ell_i}$.
As $p \nmid \ell_i-1$ and both $\chibar_1$ and $\chibar_2$ are unramified at $\ell_i$, it follows that $\chi_{\ell_i}$ is an unramified character of $G_{\QQ_{\ell_i}}$.
If $\ell_i \nmid M'$, then $\rho_f$ is unramified at $\ell_i$.
This implies that $\rho_f |_{G_{\QQ_{\ell_i}}} = \chi_{\ell_i} \oplus \chi_{\ell_i}\chi_p$.
But we know, from \cite[Lemma 5.1.1]{D2}, that this is not possible. Hence, it follows that $\ell_i \mid M'$ for all $1 \leq i \leq r$.
Combining this with the description of the semi-simplification of $\rho_f |_{G_{\QQ_{\ell_i}}}$ obtained above, we conclude that $\rho_f|_{G_{\QQ_{\ell_i}}}$ is Steinberg for all $1 \leq i \leq r$ i.e. $\rho_f|_{G_{\QQ_{\ell_i}}} \simeq \begin{pmatrix} \chi_i\chi_p & * \\ 0 & \chi_i\end{pmatrix}$, where $\chi_i$ is an unramified character and $*$ is ramified.
Hence, $\ell_i \mid\mid M'$ for all $1 \leq i \leq r$.
\end{proof}

Now $\rho_f$ is unramified outside the set of primes dividing $Mp$ and $p \nmid M'$.
Let $q$ be a prime dividing $\frac{M'}{\prod_{i=1}^{r}\ell_i}$.
 Then $q \mid N$.
Moreover, as $p \nmid \phi(N)$, Hypotheses~\eqref{hyp2} and \eqref{hyp3} of Set-up~\ref{setup} and \cite[Proposition 2]{Ca} together imply that if $q^e$ is the highest power of $q$ dividing $\frac{M'}{\prod_{i=1}^{r}\ell_i}$, then $q^e \mid N$.
Therefore, we get that $\prod_{i=1}^{r}\ell_i \mid M' \mid M=N\prod_{i=1}^{r}\ell_i$.
This completes the proof of the theorem.

\section{Proof of Theorem~\ref{thmc}}
\label{proofthm1}
Now we move on to Theorem~\ref{thmc}. 
We keep the notation from the previous section.
Since $H^1_{\{p\}}(G_{\QQ,Np},\chibar^{-1})=0$ and $\chibar^{-1}|_{G_{\QQ_{\ell_0}}} = \omega_p|_{G_{\QQ_{\ell_0}}}$, it follows from Greenberg--Wiles formula (\cite[Theorem 2]{Wa}) that $$\dim(H^1_{\{p\}}(G_{\QQ,N\ell_0 p},\chibar^{-1})) = \dim(H^1_{\{p\}}(G_{\QQ,Np},\chibar^{-1})) + \dim(H^0(G_{\QQ_{\ell_0}},\chibar\omega_p|_{G_{\QQ_{\ell_0}}})) = 0+1=1.$$
Let $c \in H^1_{\{p\}}(G_{\QQ,N\ell_0 p},\chibar^{-1})$ be a non-zero element. 

First suppose $p \nmid \ell_0 +1$.
Note that $p \nmid \phi(N\ell_0)$ and $N\ell_0$ satisfies the conditions of Setup~\ref{setup}.
Therefore by Theorem~\ref{thmb}, it follows that there exists a newform $f$ of level $M' \mid N\prod_{i=0}^{r}\ell_i$ and weight $k$ such that $\rho_f$ lifts $\rhob_0$ and $f$ is new at $\ell_i$ for all $1 \leq i \leq r$.
Moreover, from the proof of Theorem~\ref{thmb}, we see that there exists a $G_{\QQ,N\prod_{i=0}^{r}\ell_i p}$-stable lattice of $\rho_f$ such that the corresponding representation $\rho$ of $G_{\QQ,N\prod_{i=0}^{r}\ell_i p}$ is a deformation of $\rhob_c$.
As $H^1_{\{p\}}(G_{\QQ,Np},\chibar^{-1}) = 0$, it follows that $c$ is ramified at $\ell_0$.
Therefore, it follows that $\rho$ is ramified at $\ell_0$ and hence, $\rho_f$ is ramified at $\ell_0$ which means $f$ is new at $\ell_0$.
%As $p \nmid \ell_0^2-1$, $\rhob_0$ is unramified at $\ell_0$ and $\rho_f$ is ramified at $\ell_0$, it follows, from \cite[Proposition 2]{Ca}, that $\ell_0 \mid\mid M'$.
This proves the theorem when $p \nmid \ell_0 +1$.

Now suppose $p \mid \ell_0+1$. We will follow the strategy of the proof of Theorem~\ref{thmb} here with a slight modification.
Let $M=N\prod_{i=0}^{r}\ell_i$ and $\mathcal{I}_0$ be the ideal of $R(M)_{\rhob_c}$ defined in the proof of Theorem~\ref{thmb} (\S\ref{proofthm}).
For the ease of notation, we will refer to $\ell_0$ by $\ell$ for the rest of this section and we will use the notation introduced in diagram~\eqref{diagram1}.

Let $\mathcal{J}_0$ be the ideal of $R(M)_{\rhob_c}$ generated by $\mathcal{I}_0$ and $\text{res}_{\ell,M} \circ \mathcal{F}_{\ell}(S)$.
Let $R' := R(M)_{\rhob_c}/\mathcal{J}_0$.
From Lemma~\ref{surjlemma}, we get that $R'$ is a finite $W(\FF)$-algebra and hence, its Krull dimension is at most $1$.

Note that, by Proposition~\ref{localramprop}, we know that there exist elements $f_2,\cdots,f_{n-3}$ of $R_0$ such that $\ker(\mathcal{F})$ is generated by $\Xi_{\ell,M}(SF_\ell),f_2,\cdots,f_{n-3}$ along with the other elements described in Proposition~\ref{strprop}.
For instance, when $s=0$, $\ker(\mathcal{F})$ is generated by the set $\{\Xi_{\ell,M}(SF_\ell),f_2,\cdots,f_{n-3},W_1g_1,\cdots,W_rg_r\}$.
One gets similar statements for other cases.

Recall that $ \text{res}_{\ell,M} \circ \mathcal{F}_{\ell} =\mathcal{F} \circ \Xi_{\ell,M}$ (see diagram~\eqref{diagram1}).
Let $f_{\alpha}, f_{\beta}$ and $f_{\delta_k}$ be the elements of $R_0$ as defined in \S\ref{proofthm}.
As in the previous section, let $\mathcal{H''} : R_0 \to R'$ be the map obtained by composing $\mathcal{F}$ with the quotient map $R(M)_{\rhob_c} \to R'$.

Combining the description of $R_0$ given before Lemma~\ref{powerlem}, the description of $\ker(\mathcal{F})$ obtained in Proposition~\ref{strprop} and Proposition~\ref{localramprop}, we get that:
\begin{enumerate}
\item If $s=0$, then $R_0$ has Krull dimension $n+r+1$ and $\ker(\mathcal{H''})$ is generated by the set $$\{f_{\alpha},f_{\beta},f_{\delta_k},\Xi_{\ell,M}(S),f_2,\cdots,f_{n-3},g_1,\cdots,g_r\}.$$
So it is generated by $n+r$ elements.
\item If $0 < s < r$, then $R_0$ has Krull dimension $n+s+r+1$ and $\ker(\mathcal{H''})$ is generated by the set 
$$\{f_{\alpha},f_{\beta},f_{\delta_k},\Xi_{\ell,M}(S),f_2,\cdots,f_{n-3},U_1,\cdots,U_s,h'_1,\cdots,h'_s,g_{s+1},\cdots,g_r\}.$$
So it is generated by $n+s+r$ elements.
\item If $s=r$, then $R_0$ has Krull dimension $n+2r+1$ and $\ker(\mathcal{H''})$ is generated
the set $$\{f_{\alpha},f_{\beta},f_{\delta_k},\Xi_{\ell,M}(S),f_2,\cdots,f_{n-3},U_1,\cdots,U_r,h'_1,\cdots,h'_r\}.$$
 So it is generated by $n+2r$ elements.
\end{enumerate}
Therefore, in each case, using \cite[Theorem 10.2]{E}, we conclude that $R'$ is a finite $W(\FF)$-algebra of Krull dimension $1$.

Let $P$ be a minimal prime of $R'$. Thus $R'/P$ is a subring of $\overline{\QQ_p}$ (see \S\ref{proofthm} for more details). Fix an inclusion $i : R'/P \to \overline{\QQ_p}$.
Let $\rho_P : G_{\QQ,Mp} \to \GL_2(R'/P)$ be the representation obtained by composing $\rho_M^{\univ}$ with the natural surjective map $R(M)_{\rhob_c} \to R'/P$. 
Using the inclusion $i$, we can view $\rho_P$ as a representation over $\overline{\QQ_p}$.

As both $R:= R(M)_{\rhob_c}/\mathcal{I}_0$ and $R'$ are finite $W(\FF)$-algebras of Krull dimension $1$, it follows that there exists a minimal prime ideal $Q$ of $R$ such that the quotient map $R \to R'$ induces an isomorphism $R/Q \simeq R'/P$.
Therefore, using Proposition~\ref{modularprop}, we conclude that $\rho_P$ is the $p$-adic Galois representation attached to a $p$-ordinary newform $f$ of some level $M'$ such that $\ell_i \mid\mid M'$ for every $1 \leq i \leq r$ and $p \nmid M'$.
Note that, in this case, we are applying Proposition~\ref{modularprop} after replacing $N$ by $N\ell$ in loc. cit. and taking $M= N\ell\prod_{i=1}^{r}\ell_i$.
Therefore, we do not get any statement about the divisibility of $M'$ by $\ell$ from it.

Note that $\rho_P : G_{\QQ,Mp} \to \GL_2(R'/P)$ is a deformation of $\rhob_c$ which is ramified at $\ell$. 
Therefore, $\rho_P$ is ramified at $\ell$ which implies that $\ell \mid M'$.
Since the image of $\Xi_{\ell,M}(S)$ (i.e. the image of $\text{res}_{\ell,M} \circ \mathcal{F}_{\ell}(S)$) in $R'/P$ is $0$, it follows, from the proof of Lemma~\ref{localstrlem}, that $\rho_P|_{G_{\QQ_\ell}} \simeq \begin{pmatrix} \omega_1 & * \\0 & \omega_2 \end{pmatrix}$, where for $i=1,2$, $\omega_i : G_{\QQ_\ell} \to (R'/P)^{\times}$ is an unramified character lifting $\chibar_i|_{G_{\QQ_\ell}}$.
Therefore, $\rho_P|_{G_{\QQ_\ell}}$ is either principal series or Steinberg.

Suppose $\rho_P|_{G_{\QQ_\ell}}$ is principal series.
Then, it is semi-simple and hence, it follows, from Brauer-Nesbitt theorem, that $\rho_P|_{G_{\QQ_\ell}} =\omega_1 \oplus \omega_2$.
Thus, it implies that $\rho$ is unramified at $\ell$ which gives us a contradiction.
Therefore, $\rho_P|_{G_{\QQ_{\ell}}}$ is Steinberg which implies that $\ell \mid\mid M'$. 
Note that $p \nmid \phi(N\ell^2)$ and $N\ell^2$ satisfies Hypotheses~\eqref{hyp2} and \eqref{hyp3} of Set-up~\ref{setup} .
Hence, it follows, from the proof of Theorem of Theorem~\ref{thmb}, that $M' \mid N\ell^2\prod_{i=1}^{r}\ell_i$.
As we have proved $\ell \mid\mid M'$, it follows that $M' \mid N\ell\prod_{i=1}^{r}\ell_i$.
This finishes the proof of the theorem.

\section{Proof of Theorem~\ref{thme}}
\label{proofthm2}
Since $\dim(H^1_{\{p\}}(G_{\QQ,N\ell p},\chibar^{-1}))=1$ and $p \nmid \ell-1$, Lemma~\ref{dimlem} implies that $R(N\ell)_{\rhob_c}$ is a local complete intersection ring of Krull dimension $4$.
%Let $R := R(N\ell)_{\rhob_c}/(\text{res}_{\ell} \circ \mathcal{F}_{\ell}(F_\ell))$.
Let $\overline{f_2},\cdots,\overline{f_{n-3}} \in W(\FF)\llbracket X_1,\cdots,X_n\rrbracket$ be the elements found in Proposition~\ref{localstrprop}.
Combining Proposition~\ref{localstrprop} and the fact that $\mathcal{F'} \circ \Xi_\ell = \text{res}_{\ell} \circ \mathcal{F}_\ell$ (see diagram~\eqref{diagram}), it follows that the kernel of the surjective map $W(\FF)\llbracket X_1,\cdots,X_n\rrbracket \to R(N\ell)_{\rhob_c}/(\text{res}_{\ell} \circ \mathcal{F}_{\ell}(F_\ell))$, obtained by composing $\mathcal{F'}$ with the quotient map $R(N\ell)_{\rhob_c} \to R(N\ell)_{\rhob_c}/(\text{res}_{\ell} \circ \mathcal{F}_{\ell}(F_\ell))$, is generated by the set $\{\Xi_{\ell}(F_\ell),\overline{f_2},\cdots,\overline{f_{n-3}}\}$.

Since $R(N\ell)_{\rhob_c}$ has Krull dimension $4$, we conclude, using \cite[Theorem 10.2]{E}, that the Krull dimension of $R(N\ell)_{\rhob_c}/(\text{res}_{\ell} \circ \mathcal{F}_{\ell}(F_\ell))$ is also $4$.
Let $k >2$ be an integer such that $k \equiv k_0 \pmod{p-1}$ and let $\alpha$, $\beta$ and $\delta_k$ be elements of $R(N\ell)_{\rhob_c}$ found in Lemma~\ref{diagonallem}.
Hence, it follows, by combining \cite[Theorem 10.2]{E} and Lemma~\ref{dimlem}, that $\mathcal{S} := R(N\ell)_{\rhob_c}/(\alpha,\beta,\delta_k,\text{res}_{\ell} \circ \mathcal{F}_{\ell}(F_\ell))$ is a finite $W(\FF)$-algebra of Krull dimension $1$.

Let $Q$ be a minimal prime of $\mathcal{S}$ and $\rho_Q : G_{\QQ,N\ell p} \to \GL_2(\mathcal{S}/Q)$ be the representation obtained by composing $\rho_{N\ell}^{\univ}$ with the natural surjective map $R(N\ell)_{\rhob_c} \to \mathcal{S}/Q$.
Since $\rhob_c$ is ramified at $\ell$, $\rho_Q$ is also ramified at $\ell$.
As $\mathcal{S}/Q$ is a finite $W(\FF)$-algebra and an integral domain of Krull dimension $1$, it is isomorphic to a subring of $\overline{\QQ_p}$.
Fix an embedding $\mathcal{S}/Q \to \overline{\QQ_p}$ and using this embedding, we view $\rho_Q$ as a representation over $\overline{\QQ_p}$. 

By Lemma~\ref{diagonallem}, $\rho_Q : G_{\QQ,N\ell p} \to \GL_2(\overline{\QQ_p})$ is ordinary at $p$ i.e. $\rho_Q|_{G_{\QQ_p}} = \begin{pmatrix} \eta_2 & 0\\ * & \eta_1\end{pmatrix}$, where $\eta_2$ an unramified character of $G_{\QQ_p}$. Moreover, $\det(\rho_Q)=\epsilon_k\chi_p^{k-1}$, where $\epsilon_k$ is a character of $G_{\QQ,Mp}$ of finite order. 
Since $k > 2$ and $p \nmid \phi(N\ell)$, we get, by following the proof of Proposition~\ref{modularprop}, that $\rho_Q$ is irreducible.
Hence, by \cite[Theorem A]{SW2}, $\rho_Q$ is the $p$-adic Galois representation attached to a newform $f$ of some level $M$.
So, it follows, from \cite[Proposition 3.6]{Mo}, that $f$ is $p$-ordinary. As $k > 2$, \cite[Lemma 5.1.2]{D2} implies that $p \nmid M$.

Since $\rho_Q$ is ramified at $\ell$, it follows that $\ell \mid M$. As $\chibar_1$ and $\chibar_2$ are unramified at $\ell$, \cite[Proposition 2]{Ca} implies that $\ell^3 \nmid M$. 
Now $\rho_Q$ is unramified outside the set of primes dividing $N\ell p$ and $p \nmid M$.
Let $q \neq \ell$ be a prime dividing $M$.
 Then $q \mid N$.
Moreover, as $p \nmid \phi(N)$, Hypotheses~\eqref{hyp2} and \eqref{hyp3} of Set-up~\ref{setup} and \cite[Proposition 2]{Ca} together imply that if $q^e$ is the highest power of $q$ dividing $M$, then $q^e \mid N$.
So it follows that $M \mid N\ell^2$.
Hence, to prove the theorem, it suffices to prove that $\ell^2 \mid M$.

Now if $\ell \mid\mid M$, then, using \cite[Proposition 2]{Ca} again, we get that $\rho_Q|_{G_{\QQ_{\ell}}} \simeq \begin{pmatrix} \chi\chi_p & * \\ 0 & \chi\end{pmatrix}$, where $\chi$ is an unramified character and $*$ is ramified.
For an element $a \in R(N\ell)_{\rhob_c}$, denote its image in $\mathcal{S}/Q$ by $\bar a$.
We will now use the notation developed just before Lemma~\ref{localstrlem}.
So, from the proof of Lemma~\ref{localstrlem}, it follows that $\overline{\text{res}_{\ell}(z)}=0$.
In the same proof, we obtain that $f_{\ell} \equiv f'_\ell \equiv \ell \pmod{(z)}$ and $\phi^{-1}-f'_\ell=0$.
Therefore, we conclude that $\overline{\text{res}_{\ell}(\phi)}^{-1}-\bar\ell =0$.

Recall that $z(\phi-f_{\ell})=0$.
Let $H_\ell \in W(\FF)\llbracket S, T \rrbracket$ be an element such that $\mathcal{F}_\ell(H_\ell)=\phi-f_\ell$.
So $SH_\ell \in \ker(\mathcal{F}_\ell)$.
It follows, from Lemma~\ref{localstrlem}, that $SF_\ell \mid SH_\ell$. This means $F_\ell \mid H_\ell$ as $W(\FF)\llbracket S, T \rrbracket$ is a UFD.
Hence, we conclude that $\mathcal{F}_\ell(H_\ell) = \phi - f_\ell \in (\mathcal{F}_{\ell}(F_\ell))$.
Therefore, we obtain that $\overline{\text{res}_{\ell}(\phi)}-\bar\ell =0$.

Thus we get, $\bar\ell=\bar\ell^{-1}$ i.e. $\bar\ell+1=0$.
As $\mathcal{S}/Q$ is a finite $W(\FF)$-algebra, we get that $\mathcal{S}/Q$ has Krull dimension $0$.
However, we know that $\mathcal{S}/Q$ has Krull dimension $1$ which gives a contradiction.
Hence, we conclude that $\ell^2 \mid M$ which proves part~\eqref{bhaag1} of the theorem.

To prove part~\eqref{bhaag2} of the theorem, we follow the strategy used in the proof of Theorem~\ref{thmc} with a slight modification.
Let $M = N\ell\prod_{i=1}^{r}\ell_i$ and $\mathcal{I}_0$ be the ideal of $R(M)_{\rhob_c}$ defined in \S\ref{proofthm}.
Let $\mathcal{J}'_0$ be the ideal of $R(M)_{\rhob_c}$ generated by $\mathcal{I}_0$ and $\text{res}_{\ell,M} \circ \mathcal{F}_{\ell}(F_\ell)$.
Let $R'' := R(M)_{\rhob_c}/\mathcal{J}'_0$.
From Lemma~\ref{surjlemma}, we get that $R''$ is a finite $W(\FF)$-algebra and hence, its Krull dimension is at most $1$.

Note that, by Proposition~\ref{localramprop}, we know that there exist elements $f_2,\cdots,f_{n-3}$ of $R_0$ such that $\ker(\mathcal{F})$ is generated by $\Xi_{\ell,M}(SF_\ell),f_2,\cdots,f_{n-3}$ along with the other elements described in Proposition~\ref{strprop}.
%Recall that $ \text{res}_{\ell,M} \circ \mathcal{F}_{\ell} =\mathcal{F} \circ \Xi_{\ell,M}$ (see diagram~\eqref{diagram}).
Let $f_{\alpha}, f_{\beta}$ and $f_{\delta_k}$ be the elements of $R_0$ as defined in \S\ref{proofthm} and let $\mathcal{H'''} : R_0 \to R''$ be the map obtained by composing $\mathcal{F}$ with the quotient map $R(M)_{\rhob_c} \to R''$.

Combining the description of $R_0$ given before Lemma~\ref{powerlem}, the description of $\ker(\mathcal{F})$ obtained in Proposition~\ref{strprop} and Proposition~\ref{localramprop}, we get that (see proof of Theorem~\ref{thmc} for more details):
\begin{enumerate}
\item If $s=0$, then $R_0$ has Krull dimension $n+r+1$ and $\ker(\mathcal{H'''})$ is generated by the set $$\{f_{\alpha},f_{\beta},f_{\delta_k},\Xi_{\ell,M}(F_\ell),f_2,\cdots,f_{n-3},g_1,\cdots,g_r\}.$$
So it is generated by $n+r$ elements.
\item If $0 < s < r$, then $R_0$ has Krull dimension $n+s+r+1$ and $\ker(\mathcal{H'''})$ is generated by the set 
$$\{f_{\alpha},f_{\beta},f_{\delta_k},\Xi_{\ell,M}(F_\ell),f_2,\cdots,f_{n-3},U_1,\cdots,U_s,h'_1,\cdots,h'_s,g_{s+1},\cdots,g_r\}.$$
So it is generated by $n+s+r$ elements.
\item If $s=r$, then $R_0$ has Krull dimension $n+2r+1$ and $\ker(\mathcal{H'''})$ is generated
the set $$\{f_{\alpha},f_{\beta},f_{\delta_k},\Xi_{\ell,M}(F_\ell),f_2,\cdots,f_{n-3},U_1,\cdots,U_r,h'_1,\cdots,h'_r\}.$$
 So it is generated by $n+2r$ elements.
\end{enumerate}
Therefore, in each case, using \cite[Theorem 10.2]{E}, we conclude that $R''$ is a finite $W(\FF)$-algebra of Krull dimension $1$.

Let $P$ be a minimal prime of $R''$. So $R''/P$ is an integral domain which is finite over $W(\FF)$. Hence, it is isomorphic to a subring of $\overline{\QQ_p}$. So fix an inclusion $i : R''/P \to \overline{\QQ_p}$.
Let $\rho : G_{\QQ,Mp} \to \GL_2(R''/P)$ be the representation obtained by composing $\rho_M^{\univ}$ with the natural surjective map $R(M)_{\rhob_c} \to R''/P$. 
Using the inclusion $i$, we can view $\rho$ as a representation over $\overline{\QQ_p}$.
Note that there exists a minimal prime of $Q$ of $R:=R(M)_{\rhob_c}/\mathcal{I}_0$ such that the quotient map $R \to R''$ induces an isomorphism $R/Q \simeq R''/P$.
Therefore, by Proposition~\ref{modularprop}, we get that $\rho$ is the $p$-adic Galois representation attached to a newform of level $M'$ such that $p \nmid M'$ and $\ell_i \mid\mid M'$ for all $1 \leq i \leq r$.

Following the proof of part~\eqref{bhaag1} of the theorem, we get that $\ell^2 \mid M'$. 
Note that $p \nmid \phi(N\ell^2)$ and $N\ell^2$ satisfies Hypotheses~\eqref{hyp2} and \eqref{hyp3} of Set-up~\ref{setup} .
Hence, it follows, from the proof of Theorem of Theorem~\ref{thmb}, that $M' \mid N\ell^2\prod_{i=1}^{r}\ell_i$.
This finishes the proof of part~\ref{bhaag2} of the theorem.

\section{Proofs of Corollaries}
\label{proofcor}

\begin{proof}[Proof of Corollary~\ref{corf}]
If $\text{Cl}(\QQ(\zeta_p))/p\text{Cl}(\QQ(\zeta_p))[\omega_p^{k_0}]=0$, then \cite[Lemma 21]{BK} implies that $\dim(H^1(G_{\QQ,p},\omega_p^{1-k_0}))=1$.
If $p \mid B_{k_0}$, then Herbrand--Ribet theorem implies that $\dim(H^1_{\{p\}}(G_{\QQ,p},\omega_p^{1-k_0})) \geq 1$.
Therefore, we conclude that $\dim(H^1_{\{p\}}(G_{\QQ,p},\omega_p^{1-k_0})) = 1$.
Hence, Theorem~\ref{thmb} for $N=1$, along with the assumption that $p \nmid \prod_{i=1}^{r}(\ell_i-1)$, implies Corollary~\ref{corf}.
\end{proof}

\begin{proof}[Proof of Corollary~\ref{corb}]
By Herbrand-Ribet theorem, $p \nmid B_{k_0}$ if and only if $$\text{Cl}(\QQ(\zeta_p))/p\text{Cl}(\QQ(\zeta_p))[\omega_p^{1-k_0}]=0.$$
Hence, if $p \nmid B_{k_0}$, then $H^1_{\{p\}}(G_{\QQ,p},\omega_p^{1-k_0})=0$.
%If $p \nmid B_{k_0}$, then Herbrand--Ribet theorem implies that $$\text{Cl}(\QQ(\zeta_p))/p\text{Cl}(\QQ(\zeta_p))[\omega_p^{1-k_0}]=0,$$
%which in turn implies that $H^1_{\{p\}}(G_{\QQ,p},\omega_p^{1-k_0})=0$.
%If $\text{Cl}(\QQ(\zeta_p))/p\text{Cl}(\QQ(\zeta_p))[\omega_p^{1-k_0}]=0$, then $H^1_{\{p\}}(G_{\QQ,p},\omega_p^{1-k_0})=0$.
%On the other hand if $p$ is a regular prime, then $\text{Cl}(\QQ(\zeta_p))/p\text{Cl}(\QQ(\zeta_p))[\omega_p^{1-k_0}]=0$ and hence, $H^1_{\{p\}}(G_{\QQ,p},\omega_p^{1-k_0})=0$.
Therefore, Theorem~\ref{thmc} for $N=1$, along with the assumption that $p \nmid \prod_{i=0}^{r}(\ell_i-1)$, implies Corollary~\ref{corb}.
\end{proof}

\begin{proof}[Proof of Corollary~\ref{squarecor}]
Since $p \nmid B_{k_0}$, it follows, from the proof of Corollary~\ref{corb}, that $H^1_{\{p\}}(G_{\QQ,p},\omega_p^{1-k_0})=0$.
%If $p \nmid B_{k_0}$, then Herbrand--Ribet theorem implies that $$\text{Cl}(\QQ(\zeta_p))/p\text{Cl}(\QQ(\zeta_p))[\omega_p^{1-k_0}]=0.$$
%If $\text{Cl}(\QQ(\zeta_p))/p\text{Cl}(\QQ(\zeta_p))[\omega_p^{1-k_0}]= \text{Cl}(\QQ(\zeta_p))/p\text{Cl}(\QQ(\zeta_p))[\omega_p^{2-k_0}] =0,$ then the hypothesis $2 < k_0 < p-1$ implies that $$H^1_{\{p\}}(G_{\QQ,p},\omega_p^{1-k_0})=0 =H^1_{\{p\}}(G_{\QQ,p},\omega_p^{2-k_0}).$$
%On the other hand, if $p$ is a regular prime, then $$\text{Cl}(\QQ(\zeta_p))/p\text{Cl}(\QQ(\zeta_p))[\omega_p^{1-k_0}]= \text{Cl}(\QQ(\zeta_p))/p\text{Cl}(\QQ(\zeta_p))[\omega_p^{2-k_0}] =0$$ and hence, $H^1_{\{p\}}(G_{\QQ,p},\omega_p^{1-k_0})=0=H^1(G_{\QQ,p},\omega_p^{2-k_0})$.
Therefore, Theorem~\ref{thme} for $N=1$ implies Corollary~\ref{squarecor}.
\end{proof}

%{\bf Competing interests}: none.


\begin{thebibliography}{9}
\bibitem{AM} M. F. Atiyah and I. G. MacDonald,
\emph{Introduction to Commutative Algebra,}
Addison-Wesley Publishing Co., Reading, Mass.-London-Don Mills, Ont, 1969, ix+128.

\bibitem{Bel} J. Bella\"{i}che,
\emph{Image of pseudo-representations and coefficients of modular forms modulo $p$,}
Adv. Math., 353 (2019), 647--721.

\bibitem{BC1} J. Bella\"{i}che and G. Chenevier,
\emph{Lissit\'{e} de la courbe de Hecke de $\GL(2)$ aux points Eisenstein critiques,}
 J. Inst. Math. Jussieu (2006) 5(2), 333--349. 

\bibitem{BC} J. Bella\"{i}che and G. Chenevier,
\emph{Families of Galois representations and Selmer groups,}
Asterisque 324 (2009), pp. xii+314.

\bibitem{BK} J. Bella\"{i}che and C. Khare,
\emph{Level $1$ Hecke algebras of modular forms modulo $p$,}
Compos. Math. 151 (2015), no. 3, 397--415.

\bibitem{BM} N. Billerey and R. Menares,
\emph{On the modularity of reducible mod $\ell$ Galois representations,}
Math. Res. Lett. 23 (2016), no. 1, 15--41.

\bibitem{BM2} N. Billerey and R. Menares,
\emph{Strong modularity of reducible Galois representations,}
Trans. Amer. Math. Soc., Vol. 370(2) (2018) 967--986.

\bibitem{Bo1} G. B\"{o}ckle,
\emph{Explicit universal deformations of even Galois representations,}
Math. Nachr. 206 (1999), 85--110.

\bibitem{Bo2} G. B\"{o}ckle,
\emph{A local-to-global principle for deformations of Galois representations,}
J. Reine Angew. Math. 509 (1999), 199--236.

\bibitem{Bo3} G. B\"{o}ckle,
\emph{Presentations of universal deformation rings,}
$L$-functions and Galois representations, 24--58, London Math. Soc. Lecture Note Ser., 320, Cambridge University Press, 2007.

\bibitem{Bos} N. Boston,
\emph{Families of Galois representations - Increasing the ramification,}
Duke Math. J., 66 (1992), no. 3, 357--367.

\bibitem{Ca} H. Carayol,
\emph{Sur les repr\'{e}sentations galoisiennes modulo $\ell$ attach\'{e}es aux formes modulaires,}
Duke Math. J. 59 (1989), 785--801. 

\bibitem{C} G. Chenevier,
\emph{The $p$-adic analytic space of pseudocharacters of a profinite group and pseudorepresentations over arbitrary rings,}
 Automorphic Forms and Galois Representations, Vol. 1, 221--285, London Math. Soc. Lecture Note Ser., 414, Cambridge University Press, 2014.

\bibitem{D} S. V. Deo,
\emph{Effect of increasing the ramification on pseudo-deformation rings,}
  J. Th\'{e}orie Nombres Bordeaux, 34 (2022), no. 1, 189--236.

\bibitem{D2} S. V. Deo,
\emph{On density of modular points in pseudo-deformation rings,}
Int. Math. Res. Not. IMRN, 2023 (2023), no. 21, 18443--18499.

\bibitem{D3} S. V. Deo,
\emph{The Eisenstein ideal of weight $k$ and ranks of Hecke algebras,}
To appear in J. Inst. Math. Jussieu, Available at \href{https://arxiv.org/pdf/2106.04551.pdf}{https://arxiv.org/pdf/2106.04551.pdf}.

\bibitem{Dia} F. Diamond,
\emph{Congruence primes for cusp forms of weight $k \geq 2$,}
Ast\'{e}risque 196-197 (1991), 202--215.

\bibitem{DT} F. Diamond and R. Taylor,
\emph{Nonoptimal levels of mod $\ell$ modular representations,}
Invent. Math. 115 (1994), no. 3, 435--462.

\bibitem{E} D. Eisenbud,
\emph{Commutative algebra with a view toward algebraic geometry,}
Graduate Texts in Mathematics, 150, Springer-Verlag, New York, 1995.

\bibitem{GP} R. Gaba and A. A. Popa,
\emph{A generalization of Ramanujan's congruence to modular forms of prime level,}
J. Number Theory 193 (2018), 48--73.

\bibitem{KKMS} A. Kumar, M. Kumari, P. Moree and S. K. Singh,
\emph{Ramanujan-style congruences for prime level,}
Math. Z., 303 (2023), no. 1, Paper No. 19, 22 pp.

\bibitem{K} M. Kurihara,
\emph{Some remarks on conjectures about cyclotomic fields and $K$-groups of $\mathbb{Z}$,}
Compos. Math. 81 (1992), 223--236.

\bibitem{K2} M. Kurihara,
\emph{Ideal class groups of cyclotomic fields and modular forms of level $1$,}
J. Number Theory 45 (1993), no. 3, 281--294.

%\bibitem{Ki3} M. Kisin,
%\emph{The Fontaine-Mazur conjecture for $\GL_2$,}
%J. Amer. Math. Soc. 22 (3) (2009), 641-690.

\bibitem{LW} J. Lang and P. Wake,
\emph{A modular construction of unramified $p$-extensions of $\QQ(N^{1/p})$,}
Proc. Amer. Math. Soc. Ser. B, 9 (2022), 415--431.

\bibitem{LMP} F. Luca, R. Menares and A. Pizarro-Madariaga,
\emph{On shifted primes with large prime factors and their products,}
Bull. Belg. Math. Soc. Simon Stevin, 22 (2015), no. 1, 39--47.

\bibitem{Ma} K. Martin,
\emph{The Jacuqet-Langlands correspondence, Eisenstein congruences and integral $L$-values in weight $2$,}
 Math. Res. Lett. 24 (2017), no 6, 1775-1795. 

\bibitem{M2} B. Mazur,
\emph{Modular curves and the Eisenstein ideal,}
 Inst. Hautes \'{E}tudes Sci. Publ. Math. No. 47 (1977), 33--186. 

\bibitem{M} B. Mazur,
\emph{Deforming Galois representations,}
Galois groups over $\mathbb{Q}$ (Berkeley, CA, 1987), 385-437, Math. Sci. Res. Inst. Publ., 16, Springer, New York, 1989.

\bibitem{Mo} A. Mokrane,
\emph{Quelques remarques sur l'ordinarit\'{e},}
J. Number Theory 73 (1998), no. 2, 162--181.

\bibitem{NSW} J. Neukirch, A. Schmidt and K. Wingberg,
\emph{Cohomology of number fields,}
Grundlehren Math. Wiss.[Fundamental Principles of Mathematical Sciences], 323, 2nd Edition, Springer-Verlag, Berlin, 2008, xvi+825.

\bibitem{O} M. Ohta,
\emph{Companion forms and the structure of $p$-adic Hecke algebras II,}
J. Math. Soc. Japan 59 (2007), no. 4, 913--951.

\bibitem{P} L. Pan,
\emph{The Fontaine-Mazur conjecture in the residually reducible case,}
 J. Amer. Math. Soc., 35 (2022), no. 4, 1031--1169.

\bibitem{Ra} R. Ramakrishna,
\emph{On a Variation of Mazur's deformation functor,}
Compos. Math. 87 (1993), 269-286.

\bibitem{Ri} K. Ribet,
\emph{Congruence relations between modular forms,}
In: Ciesielski, Z., Olech. C.(eds.) Proc. Int. Cong. of Mathematicians 1983, pp. 503-514. Warsaw: PWN 1984.

\bibitem{Ri2} K. Ribet,
\emph{Non-optimal levels of mod $\ell$ reducible Galois representations,}
CRM Lecture notes, Available at \href{https://math.berkeley.edu/~ribet/crm.pdf}{https://math.berkeley.edu/~ribet/crm.pdf}


%\bibitem{S} T. Saito,
%\emph{Modular forms and $p$-adic Hodge theory,}
%Invent. Math. 129 (1997), no. 3, 607--620.

\bibitem{SW2} C. Skinner and A. Wiles,
\emph{Residually reducible representations and modular forms,}
 Inst. Hautes Études Sci. Publ. Math. No. 89 (1999), 5--126.

\bibitem{T}  J. Tate,
\emph{Relation between $K_2$ and Galois cohomology,}
Invent. Math 36 (1976), 257-274.

\bibitem{WWE1} P. Wake and C. Wang-Erickson,
\emph{The Eisenstein ideal with squarefree level,}
 Adv. Math. 380 (2021), 107543, 62 pp.

\bibitem{Wa} L. Washington,
\emph{Galois Cohomology,}
Modular forms and Fermat's last theorem (Boston, MA, 1995), 101-120, Springer, New York, 1997.

\bibitem{Y} H. Yoo,
\emph{Non-optimal levels of a reducible mod $\ell$ modular representation,}
Trans. Amer. Math. Soc., Vol. 371(6) (2019) 3805--3830.

\end{thebibliography}
\end{document}